\def\beq{\begin{equation}} 
\def\eeq{\end{equation}}
\def\beqn{\begin{eqnarray*}}
\def\eeqn{\end{eqnarray*}}
\def\Bitem{\begin{itemize}\setlength{\itemsep}{.2in}}
\def\bitem{\begin{itemize}\setlength{\itemsep}{.05in}}
\def\eitem{\end{itemize}}
\def\Benum{\begin{enumerate}\setlength{\itemsep}{.2in}}
\def\benum{\begin{enumerate}\setlength{\itemsep}{.05in}}
\def\eenum{\end{enumerate}}
\def\bmult{\begin{multline*}}
\def\emult{\end{multline*}}
\def\bcenter{\begin{center}}
\def\ecenter{\end{center}}
\def\bframe{\begin{frame}}
\def\eframe{\end{frame}}
\newcommand{\thmref}[1]{Theorem~\ref{thm:#1}}
\newcommand{\lemref}[1]{Lemma~\ref{lem:#1}}
\newcommand{\secref}[1]{Section~\ref{sec:#1}}
\renewcommand{\algref}[1]{Algorithm~\ref{alg:#1}}
\def\eps{\epsilon}
\def\comp{\mathsf{c}}
\def\smallsum{\textstyle\sum}
\newcommand{\tprod}{{\textstyle\prod}}
\newcommand{\tsum}{{\textstyle\sum}}
\newcommand{\tsup}{{\textstyle\sup}}
\begin{document}

\title{Exact Asymptotics for the Scan Statistic and Fast Alternatives}
\author{James Sharpnack and Ery Arias-Castro}
\date{University of California, San Diego}
\maketitle

\begin{abstract}
We consider the problem of detecting a rectangle of activation in a grid of sensors in d-dimensions with noisy measurements.  
This has applications to massive surveillance projects and anomaly detection in large datasets in which one detects anomalously high measurements over rectangular regions, or more generally, blobs.  
Recently, the asymptotic distribution of a multiscale scan statistic was established in \citep{kabluchko2011extremes} under the null hypothesis, using non-constant boundary crossing probabilities for locally-stationary Gaussian random fields derived in \citep{chan2006maxima}.
Using a similar approach, we derive the exact asymptotic level and power of four variants of the scan statistic: an oracle scan that knows the dimensions of the activation rectangle; the multiscale scan statistic just mentioned; an adaptive variant; and an $\eps$-net approximation to the latter, in the spirit of \citep{MGD}.   
This approximate scan runs in time near-linear in the size of the grid and achieves the same asymptotic power as the adaptive scan.
We complement our theory with some numerical experiments.

\medskip\noindent
{\em Keywords:} sensor networks, image processing, multiscale detection, scan statistic, suprema of Gaussian random fields.
\end{abstract}

\section{Introduction}
\label{sec:intro}

Detecting anomalies in networks is important in a number of areas, such as sensor arrays \citep{radiation,overview-sensor}, digital images (incl.~satellite, medical, etc.) \citep{artificial-natural,fire-detection,medical-survey,breast-tumor,brain-tumor}, syndromic surveillance systems \citep{rotz2004advances,heffernan2004ssp,wagner2001esv}, and many more.
The scan statistic \citep{Kul} is by far the most popular approach, and is given different names in engineering, such as the method of matched filters or deformable templates \citep{medical-survey}. 
It was perhaps first introduced for finding patterns in point clouds \citep{Naus65,glaz01} and is now applied to any setting where the goal is to detect a ``localized" anomaly. 
In statistics, it corresponds to the generalized likelihood ratio test after a particular model is assumed, and as such is even more widely applicable, being the most omnibus approach to hypothesis testing.

Focusing on the detection of anomalies in networks, which includes spatiotemporal data, first order theoretical performance bounds are established in a small number of papers, such as \citep{MR2604703,MGD,morel,cluster}.  More refined results establishing weak convergence are even fewer.  \cite{jiang} considers the scan over rectangles in a grid of independent random variables with negative expectation, while \cite{boutsikas} study the scan over intervals of given length in a Bernoulli sequence.  Both works are rely heavily on the Chen-Stein Poisson approximation.
Still in the context of the one-dimensional lattice, but now with standard normal random variables, \cite{sieg95} provide 
a weak convergence for the normalized scan over all intervals.  Concretely, suppose that $y(1),\dots,y(n)$ are iid standard normal, and define
\[
Z_n = \max_{1 \le i_1 \le i_2 \le n} \frac1{\sqrt{i_2-i_1+1}} \sum_{i=i_1}^{i_2} y(i).
\]
Then \cite{sieg95} show that, for all $\tau \in \RR$, 
\[
\lim_{n \to \infty} \PP\big(Z_n \ge u_n(\tau)\big) = 1 - e^{-e^{-\tau}}, \quad u_n(\tau) := \sqrt{2 \log n} + \frac{\frac12 \log(2 \log n) + \kappa + \tau}{ \sqrt{2 \log n}},
\]
for some numeric constant $\kappa$.
This was recently extended to higher dimensions, for scans over hypercubes and hyperrectangles, by \cite{kabluchko2011extremes}.
Formally, define $[n] = \{1, \dots, n\}$ and assume that $(y(\ib) : \ib \in [n]^d)$ are iid standard normal.  A (discrete) hyperrectangle is of the form $[a_1, b_1] \times \cdots \times [a_d, b_d] \subset [n]^d$.
Let $\Rcal$ denote the class of all discrete hyperrectangles of $[n]^d$ and define the scan over $\Rcal$ as
\beq \label{scan}
Z_n = \max_{R \in \Rcal} \frac1{\sqrt{|R|}} \sum_{\ib \in R} y(\ib),
\eeq
where $|R|$ denotes the number of nodes in $R$, equal to $\prod_j (b_j -a_j +1)$ when $R = \times_j [a_j, b_j]$.
\cite{kabluchko2011extremes} shows that, for all $\tau \in \RR$, 
\beq \label{kabluchko}
\lim_{n \to \infty} \PP\big(Z_n \ge u_n(\tau)\big) = 1 - e^{-e^{-\tau}}, \quad u_n(\tau) := \sqrt{2 d \log n} + \frac{(d-\frac12) \log(2 d \log n) + \kappa + \tau}{ \sqrt{2 d\log n}},
\eeq
for some constant $\kappa$ depending only on the dimension $d$.
These results allow, in theory, to control the (asymptotic) level the test based on the scan statistic if, indeed, the data is iid standard normal when no anomaly is present and an anomaly comes in the form of a rectangle with elevated mean.  This is what we assume throughout the paper. 

\begin{quote}
{\em Contribution 1.}  We establish a weak convergence result when an anomaly is present  which, in theory, allows for precise (asymptotic) power calculations.
\end{quote}

Besides the scan statistic \eqref{scan}, we study other variants.  One of them, already considered in \citep{cluster,MR2604703}, is based on a finer normalization for the scans at different scales.  In detail, define the class of rectangles with shape $\hb \in [n]^d$ as 
\beq \label{Rh}
\Rcal(\hb) = \left\{ \times_{j = 1}^d [v_j, v_j + h_j] : v_j \in [n-h_j], \forall j \in [d] \right\},
\eeq
and let $Z_{n,\hb}$ denote the scan over $\Rcal(\hb)$, defined as in \eqref{scan} but with $\Rcal(\hb)$ in place of $\Rcal$.
We consider the test that rejects if there is $\hb$ such that $Z_{n,\hb} \ge u_{n,\hb}(\tau)$, for some explicit critical values $u_{n,\hb}(\tau)$ defined later. 
We refer to this procedure as the (scale or shape) adaptive scan.
We note that in the first order analyses found in \citep{cluster,MR2604703}, $u_{n,\hb}(\tau)$ only depends on $\|\hb\|_1 := h_1 + \dots + h_d$, which is not quite true in our situation.

\begin{quote}
{\em Contribution 2.}  We establish weak convergence results for the adaptive scan, both when an anomaly is absent and when it is present. 
\end{quote}

Both the scan and the adaptive scan are computationally intensive.  With proper implementation, they can be computed in $O(n^{2d})$ basic operations, which may nevertheless be prohibitive for scans over large networks.  
For example, a typical 2D digital image is  of size $n \times n$, where $n$ is in the order of $10^3$, resulting a computational complexity on of order $10^{12}$ basic operations.
Aware of that, \cite{MGD,cluster} and \cite{MR2604703} propose to approximate the scan statistic by scanning over a subset of rectangles that is sufficiently dense in $\Rcal$.
For a given metric $\delta$ over $\Rcal$, we say that $\Rcal_\eps$ is an $\eps$-covering if, for all $R \in \Rcal$, there is $R' \in \Rcal_\eps$ such that $\delta(R, R') \le \eps$.  
Both \cite{MGD,cluster} and \cite{MR2604703} construct different $\eps$-coverings which can be scanned in roughly $O(n^d)$ basic operations, and show that, when $\eps = \eps_n \to 0$ sufficiently slowly, scanning over an $\eps$-covering yields the same first-order asymptotic performance.  

\begin{quote}
{\em Contribution 3.}  
We establish weak convergence results for the adaptive scan over a given $\eps$-covering, both when an anomaly is absent and when it is present. 
We also construct a new $\eps$-covering and design an efficient way to scan over it using on the order of $O(n^d)$ basic operations when $\eps$ is not too small.
\end{quote}

As a benchmark we consider an oracle which knows the shape $\hb^\star$ of the anomalous rectangle (but is ignorant of its location) and therefore only scans over rectangles with the same shape, meaning, over $\Rcal(\hb^\star)$.

\begin{quote}
{\em Contribution 4.}  
We establish weak convergence results for the oracle scan, both when an anomaly is absent and when it is present. 
\end{quote}

We note that our method of proof is largely borrowed from \cite{kabluchko2011extremes}, whose approach is based on extensive work of \cite{chan2006maxima} on the extrema of Gaussian random fields, and related topics, and the Chen-Stein Poisson approximation 
\citep{arratia1989two}.

We complement our theoretical findings with some numerical experiments that we performed to compare these various methods, meaning, the oracle scan, the scan, the adaptive scan, and the adaptive scan over an $\eps$-covering.

The rest of the paper is organized as follows.
In \secref{main}, we set the framework and state the theoretical results announced above, and in \secref{experiments}, we present the result of our numerical experiments.  
We briefly discuss some extensions and open problems in \secref{discussion}, while the technical proofs are gathered in \secref{proofs}.

Before we continue, we pause to introduce some notation.
We already used the notation $[n] = \{ 1,\ldots,n\}$ for any positive integer $n$.
Cartesian products of sets are denoted with the $\times$ operator and for a set $A$ and integer $k \ge 1$, $A^k = A \times \cdots \times A$, $k$ times.
All vectors are bolded and scalars are not.
Some special vectors are $\zero = \{0,\ldots,0\}$, $\one = \{ 1, \ldots, 1\}$, and the $j$th canonical basis vector $\eb_j = \{0,\ldots,0,1,0,\ldots,0\}$ with the $1$ in the $j$th component.
A vector $\ab$ in dimension $d$ will have components denoted $a_1, \dots, a_d$.
The rectangle with endpoints $\ab, \bb$ be denoted $[\ab,\bb] = \times_{i=1}^d [a_i,b_i]$.
The symbol $\circ$ indicates the component-wise product for vectors and matrices, division between vectors denoted $\ab/\bb$ is component-wise.
The Lebesgue measure in $\RR^d$ will be denoted by $\lambda$.
For a discrete set $R$, $|R|$ denotes its cardinality.
For a vector $\ab$, $\|\ab\|$ and $\|\ab\|_1$ denote its Euclidean and $\ell_1$ norms, respectively.
For a set $A$, $I\{A\}$ (sometimes $\mathbbm{1}_A$) will denote the indicator of $A$.
We use Bachmann-Landau notation to compare infinite sequences.  
For example, if $\{a_n\}_{n=1}^\infty, \{b_n \}_{n=1}^\infty$ are such that $a_n / b_n \rightarrow 0$ then $a_n = o(b_n)$ and $b_n = \omega(a_n)$.
For stochastic sequences, if the convergence is in probability then this is denoted by a subscript as in $a_n = o_\PP(b_n)$.
%

\section{Model, methodology, and theory}
\label{sec:main}

We assume that we are given one snapshot of measurements from a sensor array in $d$-dimensional space.  This array is arranged by placing one sensor at each grid point in $[n]^d$.  An important example is that of digital images, from CCD or CMOS cameras, or other modalities such as MRI.  It also encompasses video (incl.~fMRI), by letting one dimension represent time (in some unit), although the time dimension is often treated in a special way.

We denote the measurement at sensor $\ib \in [n]^d$ by $y(\ib)$ and model this as a signal vector with additive white Gaussian noise,
\beq \label{model}
y(\ib) = x(\ib) + \xi(\ib), \quad \ib \in [n]^d,
\eeq
where $x$ is the signal and $\xi$ is white standard normal noise, or in vector notation,
\[
\yb = \xb + \xib,
\]
where $\yb, \xb \in \RR^{n^d}$ and $\xib$ is a standard normal vector in $d$ dimensions.
We address the problem of deciding whether the signal $x$ is nonzero, formalized as the following hypothesis testing problem:
\begin{equation}
\label{eq:test}
\begin{aligned}
  &H_0: \xb = \zero, \\
  &H_1: \xb \in \Xcal_\mu,
\end{aligned}
\end{equation}
for some parameter $\mu$, that will be interpreted as the signal size, and some class $\Xcal_\mu \subset \RR^{n^d}$ parametrized by $\mu$ and with the property $\zero \notin \Xcal_\mu$.
While $H_0$ represents `business as usual', $H_1$ would indicate that there is some anomalous activity, here modeled by $x$.

We address the situation where the signal has substantial `energy' over a rectangle of unknown shape.
Given a signal $\yb = (y(\ib) : \ib \in [n]^d)$, define its Z-score over a subset $R \subset [n]^d$ as 
\beq \label{Z}
y[R] = \frac1{\sqrt{|R|}}{\sum_{\ib \in R} y(\ib)}.
\eeq
Recalling the class $\Rcal(\hb)$ of rectangles of with shape $\hb$ defined in \eqref{Rh}, let $\Xcal_\mu(\hb)$ denote the following set of signals:
\beq \label{alt}
\Xcal_\mu(\hb) = \Big\{\xb \in \RR^{n^d} : \min_{R \in \Rcal(\hb)} x[R] \ge \mu \Big\}.
\eeq
Rectangles are useful in practice because of their ease of interpretation and implementation, and also because they are building blocks for more complicated shapes.  They are also more amenable to a sharp asymptotic analysis, which is the focus in this paper.
See the discussion in \secref{discussion}.

Let $\hb^\star$ denote the shape of rectangle of activation, defined as the shape $\hb$ such that ${\rm supp}(\xb) \in \Rcal(\hb)$.
For the sake of clarity and ease of analysis, we assume that we are given integers $1 \le \underline h \le \overline h \le n/e$ (where $e = \exp(1)$) such that $\hb^\star \in [\underline h, \overline h]^d$.  
Redefine $\Rcal$ as 
\beq \label{Rcal}
\Rcal = \bigcup_{\hb \in [\underline h, \overline h]^d} \Rcal(\hb).
\eeq
We know that, under the alternative, the signal is elevated over a rectangle in $\Rcal$, namely
\[
\xb \in \Xcal_\mu := \bigcup_{\hb \in [\underline h, \overline h]^d} \Xcal_\mu(\hb).
\]
Our analysis is asymptotic with respect to the grid size diverging to infinity, $n \to \infty$.  While the grid dimension $d$ remains fixed, $\mu$, $\overline h$, and $\underline h$ are allowed to depend on $n$.  In fact, throughout this paper we assume that 
\beq \label{underline_h}
\underline h = \underline h_n \text{ satisfies } \underline h / \log n \rightarrow \infty, \text{ as } n \to \infty,
\eeq
to avoid special cases and complications that arise when including very small rectangles in the scan.  

As mentioned in the Introduction, we will use the oracle scan as a benchmark.  Instead of \eqref{eq:test}, the oracle, which knows the shape $\hb^\star$, is faced with the simpler alternative:
\beq \label{oracle}
H_1^\star: \xb \in \Xcal_\mu(\hb^\star).
\eeq

We take the asymptotic Neyman-Pearson approach in which we control the asymptotic probability of type I error (aka false rejection).
Consider a test $T(\yb)$ which evaluates to $1$ if it rejects $H_0$ and $0$ otherwise.
Throughout, we assume that a level $\alpha \in (0,1)$ is given and we control the tests at the exact asymptotic level $\alpha$, which means that
\[
\lim_{n \rightarrow \infty} \PP_0 \{ T(\yb) = 1 \} = \alpha,
\]
where $\PP_0$ indicates the distribution of $\yb$ under $H_0$.
The left-hand side is called the asymptotic size of the test $T$.
For all of the test statistics that we will study, we provide a threshold that gives us such a type I error control.
Once the size of the test is under control, we examine the power of the test.  We choose to focus on the minimum power over the set of alternatives,  which in the asymptote is defined as
\[
\beta(T) = \lim_{n \rightarrow \infty} \inf_{\xb \in \Xcal} \PP_\xb \{ T(\yb) = 1 \},
\]
where $\PP_\xb$ denotes the distribution of $\yb$ under model \eqref{model}.

\subsection{The Oracle scan}

When tasked with finding a rectangle of activation in a $d$-dimensional lattice, the problem is made easier if one knows the precise shape of the active rectangle.
Having access to an oracle that provides the shape of the anomalous region simplifies the alternative down to \eqref{oracle}.
In this situation, one would naturally restrict the scan to rectangles with shape $\hb^\star$.  
We called this procedure the {\em oracle scan} in the Introduction.
Given a critical value $u$, the oracle scan test is defined as
\beq \label{scan_o}
T_o(\yb) = I \Big\{ y[R] > u \textrm{ for some } R \in \Rcal(\hb^\star) \Big\}.
\eeq

\subsubsection{Asymptotic theory}
Define the following critical value 
\beq \label{u_o}
u_n(\tau) = v_n + \frac{(2d-1) \log(v_n) + \kappa + \tau}{v_n},
\eeq
where 
\beq \label{v_o}
v_n = \sqrt{2 \smallsum_j \log (n/h^\star_j)}, \quad \kappa = - \log ( \sqrt{2\pi} ).
\eeq
Given a level $\alpha \in (0,1)$, we choose 
\beq \label{tau}
\tau = \tau_\alpha = - \log (- \log (1 - \alpha) ).  
\eeq

\begin{theorem}
\label{thm:oracle_type1}
Suppose that $\min_i h_i^\star = \omega(\log n)$. 
The oracle scan test \eqref{scan_o} with critical value \eqref{u_o} and $\tau$ chosen as in \eqref{tau}, has the following asymptotic size 
\[ 
\lim_{n \rightarrow \infty} \PP_0 \{T_o(\yb) = 1\} = 1 - e^{-e^{-\tau}} = \alpha.
\]
\end{theorem}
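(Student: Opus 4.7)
The oracle test rejects $H_0$ when $M_n := \max_{R \in \Rcal(\hb^\star)} y[R] > u_n(\tau)$. Under $H_0$, the family $\{y[R] : R \in \Rcal(\hb^\star)\}$ forms a centered, unit-variance, stationary Gaussian random field indexed by the base corner $\vb$ of $R = [\vb, \vb + \hb^\star]$, with covariance
\[
r(\sb) := \mathrm{Cov}\bigl(y[R_\vb], y[R_{\vb+\sb}]\bigr) = \prod_{j=1}^d \left(1 - \frac{|s_j|}{h_j^\star}\right)_+,
\]
since overlapping rectangles share $\prod_j (h_j^\star - |s_j|)_+$ of the $\prod_j h_j^\star$ sensors. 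The goal is to show $\PP_0(M_n > u_n(\tau)) \to 1 - \exp(-e^{-\tau})$, a Gumbel-type extreme-value statement.

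The plan follows the template of \citet{kabluchko2011extremes}, pairing single-window boundary-crossing estimates of \citet{chan2006maxima} with a Chen--Stein Poisson approximation. First I would reparametrize via $t_j := v_j/h_j^\star$, so the rescaled field lives on a grid in $\prod_j[0,(n-h_j^\star)/h_j^\star]$ with local covariance $1 - r(\hb^\star \circ \sb) = \sum_{j=1}^d |s_j| + O(\|\sb\|^2)$ as $\sb \to 0$. This is the canonical Chan--Lai regime with Pickands index $\alpha = 1$ in each coordinate, for which the Pickands constant is $H_1 = 1$. Next, the Chan--Lai formula gives, for any fixed cube $Q$ of unit side length in the rescaled coordinates,
\[
\PP_0\bigl(\max_{\vb \,:\, \vb/\hb^\star \in Q} y[R_\vb] > u\bigr) \sim \lambda(Q) \cdot u^{2d} \cdot \bar\Phi(u), \quad u \to \infty.
\]
Tiling the full index set into $\prod_j (n - h_j^\star)/h_j^\star \sim \exp(v_n^2/2)$ such cubes and summing yields an expected exceedance count of $\exp(v_n^2/2) \cdot u^{2d} \bar\Phi(u)$. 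Mills' ratio $\bar\Phi(u)\sim e^{-u^2/2}/(u\sqrt{2\pi})$, together with the expansion
\[
u_n(\tau)^2 = v_n^2 + 2(2d-1)\log v_n + 2\kappa + 2\tau + o(1)
\]
and the identification $\kappa = -\log\sqrt{2\pi}$, reduces this mean to $e^{-\tau}$ exactly. A Chen--Stein Poisson approximation \citep{arratia1989two} applied to the exceedance indicators over the tiling then upgrades the mean to a Poisson limit, giving $\PP_0(M_n \le u_n(\tau)) \to \exp(-e^{-\tau})$.

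The main obstacle is verifying the Chan--Lai single-cube tail formula in this discrete lattice setting and controlling the continuous-to-discrete approximation uniformly across the scaling regime. The hypothesis $\min_i h_i^\star = \omega(\log n)$ is decisive: it ensures that the rescaled sampling mesh $1/h_j^\star \to 0$ is fine enough that the discrete maximum over each unit cube matches the continuous supremum of the interpolating Gaussian field, while the critical level $u \sim v_n = O(\sqrt{\log n})$ remains in the Pickands regime where the first-order asymptotic is valid. A secondary issue is bounding the total-variation error in the Chen--Stein step, which is greatly simplified by the exact independence available here: $r(\sb) = 0$ whenever $|s_j| \ge h_j^\star$ for some $j$, so exceedances in tiling cubes separated by more than one unit in rescaled coordinates are truly independent, and the dependence graph has uniformly bounded local degree.
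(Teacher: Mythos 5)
Your proposal is correct and follows essentially the same route as the paper: rescale by $\hb^\star$ to a stationary unit-variance field with $\alpha=1$ local structure and Pickands constant $H_1=1$, apply the Chan--Lai fixed-region tail formula (giving $u^{2d-1}e^{-u^2/2}/\sqrt{2\pi}$ per unit cube), tile, check that the expected exceedance count equals $e^{-\tau}$ under the expansion of $u_n(\tau)$, and finish with Chen--Stein using the finite-range dependence, with $\min_j h_j^\star=\omega(\log n)$ justifying the discrete-to-continuous matching exactly as in the paper's $\epsilon$-covering lemma. The only step you leave implicit --- showing joint exceedances in \emph{adjacent} tiles are negligible (the paper's $A_2$ term, handled by applying the same tail formula to the union of two neighboring cubes) --- is routine and does not change the assessment.
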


Let $\bar\Phi$ denote the survival function of the standard normal distribution. 

\begin{theorem}
\label{thm:oracle_type2}
Suppose that $\min_j h_j^\star = \omega(\log n)$. 
The oracle scan test \eqref{scan_o} with critical value \eqref{u_o}-\eqref{tau} has the following asymptotic power 
\[
\lim_{n \rightarrow \infty} \inf_{\xb \in \Xcal(\hb^\star)} \PP_\xb \{ T_o(\yb) = 1 \} = 
\begin{cases}
1, &\mu - v_n \rightarrow \infty,\\
\alpha + (1 - \alpha) \bar\Phi(c), &\mu - v_n \rightarrow c, \textrm{ for some } c \in \RR,\\
\alpha, &\textrm{otherwise},
\end{cases}\]
where $v_n$ is defined in \eqref{v_o}.
\end{theorem}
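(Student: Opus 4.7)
The plan is to reduce the power calculation to the type~I result of Theorem~\ref{thm:oracle_type1} by isolating the contribution of a single ``signal-bearing'' rectangle from the rest of the family, which asymptotically mimics a pure-noise scan.  For any $\xb \in \Xcal_\mu(\hb^\star)$, fix $R^\star \in \Rcal(\hb^\star)$ at which $x[R^\star] = \mu$ is attained, and (by stochastic monotonicity of $\max_R y[R]$ in each entry of $\xb$) reduce to the extremal signal supported on $R^\star$ with $x[R^\star] = \mu$ exactly.  Under it, $y[R^\star] = \mu + \xi[R^\star] \sim N(\mu,1)$ while $y[R] = x[R] + \xi[R]$ with $x[R] \in [0,\mu]$ for $R \ne R^\star$, scaled by the overlap fraction $|R \cap R^\star|/|R^\star|$.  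Decompose
\[
\PP_\xb\{T_o(\yb) = 1\} \;=\; 1 - \PP_\xb\bigl\{y[R^\star] \le u_n(\tau)\bigr\}\,\PP_\xb\bigl\{\max_{R \ne R^\star} y[R] \le u_n(\tau) \,\big|\, \xi[R^\star]\bigr\},
\]
where the first factor equals $\Phi(u_n(\tau) - \mu)$ and the work lies in showing the second factor tends to $1 - \alpha$ uniformly in $\xi[R^\star]$.

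To establish the latter, partition $\Rcal(\hb^\star) \setminus \{R^\star\}$ into rectangles disjoint from $R^\star$ and rectangles overlapping it.  For the disjoint group, $\xi[R]$ is independent of $\xi[R^\star]$ by the white-noise property, and the max over this group is asymptotically distributed as the full $H_0$-scan max --- only $O(\prod_j h_j^\star) = o(n^d)$ rectangles are removed, a vanishing fraction in the extreme-value geometry of \citep{kabluchko2011extremes} --- so Theorem~\ref{thm:oracle_type1} gives $\PP\{\max_{R \cap R^\star = \emptyset} \xi[R] > u_n(\tau)\} \to \alpha$.  For the overlapping group, the signal contribution is bounded by $\mu$ and the number of such rectangles is polynomially smaller than the full family; a union bound together with Gaussian tail estimates shows that its max does not cross $u_n(\tau)$ unless $y[R^\star]$ does.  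Combining these yields the asymptotic product structure.

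Given the decomposition, $\PP_\xb\{T_o(\yb) = 1\} \to 1 - \lim_n \Phi(u_n(\tau) - \mu) \cdot (1 - \alpha)$.  Since $u_n(\tau) - v_n = O(v_n^{-1} \log v_n) = o(1)$, we have $u_n(\tau) - \mu = -(\mu - v_n) + o(1)$, and each of the three cases follows by evaluating $\lim_n \Phi(u_n(\tau) - \mu)$: Case~1 gives $\Phi \to 0$ and power $\to 1$; Case~2 gives $\Phi \to \Phi(-c) = \bar\Phi(c)$ and hence the claimed finite limit; Case~3 gives $\Phi \to 1$ and power $\to \alpha$.

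The main obstacle will be the overlapping-group control.  Although these rectangles number at most $\Theta(\prod_j h_j^\star)$, they form a highly correlated cluster around $R^\star$ and carry nontrivial mean shifts of magnitude up to $\mu \approx v_n$; showing that their max cannot cross $u_n(\tau)$ without $y[R^\star]$ doing so is the analog of the local-cluster analysis that already underlies the proof of Theorem~\ref{thm:oracle_type1}, to be handled by a Slepian-type comparison or the discretization machinery of \citep{chan2006maxima}.  The remaining ingredients --- asymptotic independence from the white-noise structure and the punctured-scan distribution via the Chen--Stein approximation \citep{arratia1989two} --- are essentially inherited from the proof template of \citep{kabluchko2011extremes}.
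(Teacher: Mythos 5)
Your plan correctly identifies the right decomposition (local cluster around $R^\star$ versus far field, plus asymptotic independence), but the step on which everything hinges --- control of the rectangles overlapping $R^\star$ --- is not actually supplied, and the argument you sketch for it would fail. You claim that ``a union bound together with Gaussian tail estimates shows that its max does not cross $u_n(\tau)$ unless $y[R^\star]$ does.'' Marginal tail bounds cannot prove such a conditional statement, and the union bound itself diverges: a rectangle whose overlap fraction with $R^\star$ is $1-\eta$ carries mean $\mu(1-\eta)$, which for small fixed $\eta$ is only $\mu\eta + O(1)$ below $u_n(\tau)\approx \mu$, so each such rectangle crosses the threshold with probability bounded away from zero while there are on the order of $\prod_j h_j^\star \to\infty$ of them. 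This is precisely where the paper does its real work: it introduces a sliding overlap level $\eta=\eta_n\to 0$ with $\mu\eta_n\to\infty$, shows via the increment bound of \lemref{xi_bound} (Part 2) that on the high-overlap set $\Ucal_\eta$ the supremum of the shifted field differs from the single value at $\wb^\star$ by a controlled amount, so that exceedance there reduces to the one-dimensional event $\{\Upsilon(\tb^\star)>u\}$, and shows that on the moderate-overlap shell the mean is at most $\mu(1-\eta)=u-\omega(1)$ while the pure-noise supremum over this bounded (after rescaling by $\hb^\star$) region is $O_\PP(1)$, so its exceedance probability vanishes. Your closing paragraph concedes this is ``the main obstacle'' and defers it to a Slepian comparison or the machinery of \citep{chan2006maxima}, but no such comparison is set up; as written, the central step of the proof is missing rather than proved.

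Several secondary points would also need repair. The justification for the disjoint group (``only a vanishing fraction of rectangles is removed'') is not valid in general: since only $\overline h\le n/e$ is assumed, the overlapping rectangles can be a constant fraction of $\Rcal(\hb^\star)$; what makes their removal harmless is that the null exceedance probability over that $O(1)$-sized rescaled neighborhood tends to zero (this is how the paper obtains \eqref{T-U} from \eqref{oracle_Xi}), not a counting argument. The reduction ``by stochastic monotonicity'' to the flat signal $\mu|R^\star|^{-1/2}{\bf 1}_{R^\star}$ does not work as stated, because a general $\xb$ with $x[R^\star]\ge\mu$ need not dominate the flat signal coordinatewise; the paper instead proves the limit for the flat indicator and observes that for general signals the same limits hold as lower bounds, which is what the minimax claim requires. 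The displayed factorization mixing an unconditional probability with a probability conditional on $\xi[R^\star]$ is not an identity; the usable statement is asymptotic independence of the local cluster and the far field, obtained from the white-noise structure after the cluster has been handled. Finally, your last computation actually yields $\alpha+(1-\alpha)\Phi(c)$, i.e.\ $\bar\Phi(-c)$, not $\alpha+(1-\alpha)\bar\Phi(c)$; asserting that this ``is the claimed finite limit'' hides a sign discrepancy (only the $\Phi(c)$ version is monotone-consistent with the two extreme cases, and the same sign issue appears in the paper's own display for $\PP\{\Upsilon(\tb^\star)>u\}$), so the sign bookkeeping should be made explicit rather than waved through.
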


\subsubsection{Computational complexity} \label{sec:comp_o}
While a naive implementation runs in $O(n^d \prod_j h_j^\star)$ time, the oracle scan can be computed in $O(n^d \log n)$ time using the Fast Fourier Transform (FFT), which is generally faster when the $h_j^\star$'s are not too small.
Specifically, let $b_\hb$ be the boxcar function with shape $\hb$, namely
\[
b_\hb(\ib) = \prod_{j = 1}^d I\{i_j \le h_j\},
\]
and let $\ast$ denote the convolution operator, so that, for $f : [n]^d \mapsto \RR$,
\[
(f \ast b_\hb) (\tb) = \sum_{\ib \in [n]^d} f(\tb + \ib) b_\hb(\ib) = \sum_{\ib \in [\hb]} f(\ib + \tb).
\]
Thus, computing the convolution $y \ast b_\hb$ amounts to computing $(y[R] : R \in \Rcal(\hb))$, and using the FFT, this convolution can be computed in $O(n^d \log n)$ time.
And the oracle scan test is based on the maximum of $y \ast b_{\hb^\star}$.

\subsection{The multiscale scan}

Perfect knowledge of the shape of the true rectangle of activation is rare. A simple solution to this problem is to scan over all rectangles in the class $\Rcal$ and report the largest observed $Z$-score.
Formally, given a critical value $u$, the {\em multiscale scan test} is  
\beq \label{scan_m}
T_m(\yb) = I \Big\{ y[R] > u \textrm{ for some } R \in \Rcal \Big\}.
\eeq
This is the test based on the scan statistic as defined in \eqref{scan}, except that $\Rcal$ is now defined as in \eqref{Rcal}.

\subsubsection{Asymptotic theory}
Define the following critical value 
\beq \label{u_m}
u_n(\tau) = v_n + \frac{(4d-1) \log(v_n) + \kappa + \tau}{v_n},
\eeq
where 
\beq \label{v_m}
v_n = \sqrt{2 d \log (n/\underline h)}, \quad \kappa = - \log (4^d \sqrt {2 \pi}).
\eeq

\citep[Th 1.2]{kabluchko2011extremes} establishes the asymptotic size of the multiscale scan test when $\underline h = 1$ and $\overline h = n$.  We do the same, when $\underline h = \omega(\log n)$ and $\overline h \le n/e$.  We note that, because of that, the critical value that we use \eqref{u_m} is different from the one that \cite{kabluchko2011extremes} uses \eqref{kabluchko}: the constants denoted by $\kappa$ in both places are in fact different, and the $(4d-1)$ factor in \eqref{u_m} is a $(2 d-1)$ factor in \eqref{kabluchko}.

\begin{theorem}  \citep{kabluchko2011extremes}
\label{thm:multiscale_type1}
Suppose that $\underline h = \omega(\log n)$.
The multiscale scan test \eqref{scan_m} with critical value \eqref{u_m}-\eqref{tau}, has the following asymptotic size 
\[ 
\lim_{n \rightarrow \infty} \PP_0 \{T_m(\yb) = 1\} = 1 - e^{-e^{-\tau}} = \alpha.
\]
\end{theorem}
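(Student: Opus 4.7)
The plan is to follow the approach of \cite{kabluchko2011extremes}, which is based on the locally stationary Gaussian field results of \cite{chan2006maxima} combined with the Chen-Stein Poisson approximation of \cite{arratia1989two}. The only substantive difference from Kabluchko's proof is the restricted range $[\underline h, \overline h]^d$ (with $\underline h = \omega(\log n)$, $\overline h \le n/e$) for the shape index, which changes the constants $(4d-1)$ and $\kappa$ relative to Kabluchko's \eqref{kabluchko}.

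The argument proceeds in three stages. First, index each $R \in \Rcal$ by its lower corner $\vb$ and shape $\hb$, and set $W(\vb, \hb) = y[R_{\vb, \hb}]$. Under $H_0$, $W$ is a centered Gaussian field of unit variance with covariance $|R_{\vb, \hb} \cap R_{\vb', \hb'}|/\sqrt{|R_{\vb, \hb}|\,|R_{\vb', \hb'}|}$. Reparametrizing by $s_j = \log h_j$ and $t_j = v_j/h_j$, a short computation shows the field is locally stationary in the Chan-Lai sense, with infinitesimal variogram a sum of $2d$ absolute-value (i.e.\ $\alpha = 1$) increments, one for each side (left/right endpoint in each direction) of the rectangle. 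Second, compute the leading asymptotic: the cardinality of $\Rcal$ is of order $(n/\underline h)^d [\log(\overline h/\underline h)]^d$, and the choice $v_n = \sqrt{2d\log(n/\underline h)}$ is calibrated so that this order matches $1/\bar\Phi(v_n)$ up to slowly varying factors. The logarithmic correction $(4d-1)\log v_n$ arises because the $2d$ continuous parameter directions with $\alpha = 1$ contribute a Pickands-type factor $u^{4d}$ to $\bar\Phi(u) \sim u^{-1} e^{-u^2/2}$; the constant $\kappa = -\log(4^d\sqrt{2\pi})$ comes from each of the $2d$ side-directions contributing a Pickands constant equal to $2$ from its one-sided linear drift after rescaling, combined with the $\sqrt{2\pi}$ from the Gaussian density. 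Third, apply Chen-Stein: discretize $(\vb, \hb)$ on a grid fine enough that the continuum and grid maxima agree up to $o_\PP(1/v_n)$ at scale $u_n$, let $N$ be the number of grid rectangles exceeding $u_n(\tau)$, verify $\EE N \to e^{-\tau}$ from the previous step, and check that the standard clustering and mixing sums vanish; this yields $N \Rightarrow \mathrm{Poisson}(e^{-\tau})$, so $\PP_0(T_m(\yb) = 0) = \PP(N = 0) \to e^{-e^{-\tau}}$.

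The main obstacle I expect is the clustering estimate in the Chen-Stein step: two nested rectangles of very different shapes can still have large overlap and hence nontrivial covariance, so the pair sum $\sum \EE[I_{\vb,\hb}\, I_{\vb', \hb'}]$ must be controlled uniformly across all scales in $[\underline h, \overline h]^d$. This is precisely where $\underline h = \omega(\log n)$ is used: it guarantees that even the smallest rectangles are ``large'' relative to $v_n^{-1}$ in the Chan-Lai normalization, so that the local covariance expansion holds uniformly over $\hb$. The adaptation from Kabluchko's setting ($\underline h = 1$, $\overline h = n$) to ours amounts to replacing the integration range in the shape variable by $[\log\underline h, \log\overline h]$, which rescales the leading-order count and shifts the constant $\kappa$ accordingly, but does not require any new technique.
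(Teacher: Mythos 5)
Your strategy---re-deriving the Kabluchko-type extreme value result from scratch via Chan--Lai local stationarity plus Chen--Stein---is viable in outline, but it is not the route the paper takes for this theorem. The paper's proof is a reduction: after rescaling space by $\underline h$, the null behavior of the continuous scan $\sup_{\wb \in \Wcal}\Xi(\wb)$ over the restricted scale range is obtained directly from \citep[Th 1.4]{kabluchko2011extremes} with $n \gets n/\underline h$ (this is where the $(4d-1)$ and $\kappa = -\log(4^d\sqrt{2\pi})$ in \eqref{u_m}--\eqref{v_m} come from), and essentially all of the paper's own work goes into transferring that statement to the discrete statistic: \lemref{Z_epsnet} shows the integer lattice is a $\sqrt{4d/\underline h}$-covering in the canonical metric $\delta$, and the chaining/Lipschitz comparison of \lemref{approx_tau} shows the discrete and continuous scans differ by $o_\PP(1)$ precisely when $\underline h = \omega(\log n)$. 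Your plan spends its effort on the part the paper cites and is comparatively light on the discrete-versus-continuous comparison, though your explanation of why $\underline h = \omega(\log n)$ is needed is in the right spirit. (Your route is essentially what the paper does do, with a scale-dependent boundary, for the adaptive scan in \lemref{nonconstant_scan}--\lemref{up_scale_adascan}.)

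There is one concrete error in your calibration step. The effective number of approximately independent rectangles is not of order $(n/\underline h)^d[\log(\overline h/\underline h)]^d$: by \lemref{excursion} the high-excursion intensity is $\Lambda(\hb,\tb) = 4^{-d}\prod_j h_j^{-2}$, and $\int_{\underline h}^{\overline h} h^{-2}\,{\rm d}h \asymp \underline h^{-1}$, so the scale integral is dominated by the finest scales and the effective count is $\asymp (n/\underline h)^d$ with no logarithmic factor. A per-octave logarithm is exactly what distinguishes the adaptive thresholds \eqref{v_a} from the flat threshold \eqref{v_m}; if the count really carried $[\log(\overline h/\underline h)]^d$, then with $v_n = \sqrt{2d\log(n/\underline h)}$ you would get $\EE N \to \infty$ (or would be forced to insert an extra $\log\log$ correction into \eqref{u_m}), contradicting the step where you propose to verify $\EE N \to e^{-\tau}$. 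Carrying out the Chan--Lai integral correctly repairs this: with a constant boundary the coarser octaves contribute a summable, geometrically decaying tail (the analogous control in the paper's adaptive analysis is \lemref{up_scale_adascan}, there with only a $k_j^{-2}$ tail because the boundary rises with scale). A smaller quibble: the $4^{-d}$ in $\kappa$ does not come from a Pickands constant equal to $2$ per side direction ($H_1 = 1$); it comes from the slope $\tfrac{1}{2h_j}$ of the drift in the local structure of \lemref{excursion}, one factor $\tfrac12$ for each of the $2d$ endpoint directions.
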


\begin{theorem}
\label{thm:multiscale_type2}
Suppose that $\min_i h_i^\star = \omega(\log n)$. 
The multiscale scan test \eqref{scan_m} with critical value \eqref{u_m}-\eqref{tau} has the following asymptotic power 
\[
\lim_{n \rightarrow \infty} \inf_{\xb \in \Xcal(\hb^\star)} \PP_\xb \{ T_m(\yb) = 1 \} = 
\begin{cases}
1, &\mu - v_n \rightarrow \infty,\\
\alpha + (1 - \alpha) \bar\Phi(c), &\mu - v_n \rightarrow c, \textrm{ for some } c \in \RR,\\
\alpha, &\textrm{otherwise},
\end{cases}\]
where $v_n$ is defined in \eqref{v_m}.
\end{theorem}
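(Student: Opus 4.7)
The argument parallels that of Theorem~\ref{thm:oracle_type2}, the main new ingredient being that the scan now ranges over the richer collection $\Rcal$ rather than only $\Rcal(\hb^\star)$. Fix $\xb$ in the alternative class with underlying rectangle $R^\star \in \Rcal(\hb^\star)$ of shape $\hb^\star$, so that $x[R^\star] \ge \mu$. Writing $y[R] = x[R] + \xi[R]$, partition $\Rcal = \{R^\star\} \cup \Rcal_{\rm far} \cup \Rcal_{\rm near}$, where $\Rcal_{\rm far} = \{R \in \Rcal : R \cap R^\star = \emptyset\}$ and $\Rcal_{\rm near}$ consists of the remaining rectangles, which meet $R^\star$ but differ from it.

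For the lower bound on the power, I would first establish
\[
\PP_0\bigl\{\max_{R \in \Rcal_{\rm far}} \xi[R] > u_n(\tau)\bigr\} \to \alpha
\]
by revisiting the Chen--Stein/Poisson-clumping argument underlying Theorem~\ref{thm:multiscale_type1}: under $H_0$ the exceedances of $u_n(\tau)$ over $\Rcal$ are approximately Poisson distributed and spread over the grid, so excising the bounded-diameter neighborhood $\Rcal_{\rm near}$ of $R^\star$ removes only a vanishing share of the Poisson mean. In parallel, because $\xi[R^\star]$ is standard normal and $u_n(\tau) - v_n = O(\log(v_n)/v_n) \to 0$,
\[
\PP_\xb\bigl\{y[R^\star] > u_n(\tau)\bigr\} \ge \bar\Phi(u_n(\tau) - \mu) \to \bar\Phi(-c) \quad \text{when $\mu - v_n \to c$}.
\]
Since $\xi[R^\star]$ is exactly independent of $\{\xi[R]: R \in \Rcal_{\rm far}\}$, inclusion--exclusion gives
\[
\PP_\xb\bigl\{T_m(\yb) = 1\bigr\} \ge 1 - (1-\alpha)\bigl(1 - \bar\Phi(-c)\bigr) + o(1),
\]
and the three cases of the theorem follow by letting $c$ tend to $+\infty$, stay finite, or tend to $-\infty$.

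For the matching upper bound on the infimum, I would exhibit a ``least favorable'' signal saturating these inequalities, namely the flat $\xb = (\mu/\sqrt{|R^\star|})\,\mathbbm{1}_{R^\star}$, for which $x[R^\star] = \mu$ and, by a short computation, $x[R] = \mu\,|R \cap R^\star|/\sqrt{|R|\,|R^\star|} \le \mu$ for every $R \in \Rcal$. The remaining task is to verify
\[
\PP_\xb\bigl\{\max_{R \in \Rcal_{\rm near}} y[R] > u_n(\tau)\bigr\} \to 0,
\]
so that the partially overlapping rectangles contribute negligibly. I would handle this by stratifying $\Rcal_{\rm near}$ according to the gap $\mu - x[R]$: a bounded number of near-copies of $R^\star$ each contribute at most $O(\bar\Phi(u_n(\tau) - \mu))$ and hence merge harmlessly into the $R^\star$ term, while rectangles with a quantitative gap have tails $\bar\Phi(u_n(\tau) - x[R])$ that are summable using Mill's ratio and the explicit form of $u_n(\tau)$ once the shape multiplicities over $[\underline h, \overline h]^d$ are tallied.

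The principal obstacle is this last step. Because $\Rcal_{\rm near}$ mixes rectangles of many shapes---unlike in the oracle setting, where it is restricted to shape $\hb^\star$---one must carefully trade the partial-signal level $x[R]$ against the number of rectangles attaining it; the factor $(4d-1)\log(v_n)$ in \eqref{u_m} (versus $(2d-1)\log(v_n)$ in \eqref{u_o}) is precisely what is needed for this union bound to close. The lower-bound step is a localized refinement of the proof of Theorem~\ref{thm:multiscale_type1} and should be readily extracted from the techniques of \citep{kabluchko2011extremes,chan2006maxima}.
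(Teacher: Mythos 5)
Your outline shares the paper's skeleton---reduce to the flat signal $\mu|R^\star|^{-1/2}{\bf 1}_{R^\star}$, split the scan into rectangles far from and near $R^\star$, obtain $\alpha$ from null behavior away from $R^\star$, obtain the Gaussian term from $R^\star$ itself, and combine by independence---but the step you yourself flag as the principal obstacle is precisely where the paper uses a different device, and your proposed route does not close. The target claim $\PP_\xb\{\max_{R\in\Rcal_{\rm near}} y[R] > u_n(\tau)\}\to 0$ is false when $\mu - v_n \to c$ finite: since $\min_j h_j^\star\to\infty$, there are $\omega(1)$ rectangles (small translates and shape perturbations of $R^\star$) with $x[R]=\mu(1-o(1))$ and noise correlation with $\xi[R^\star]$ tending to one, so this maximum exceeds $u_n(\tau)$ with probability bounded away from zero; and because these near-duplicates are unboundedly many (not ``a bounded number''), a union bound summing $\bar\Phi(u_n(\tau)-x[R])$ over them overshoots the exact constant rather than merging harmlessly. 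The paper avoids any multiplicity-versus-gap tally: in \secref{proof_oracle_type2}, carried over in \secref{proof_multiscale_type2}, it works with the continuous field $\Upsilon$ (transfer via \lemref{approx_tau} and \lemref{Z_epsnet}) and splits the overlapping rectangles into the high-correlation zone $\Ucal_\eta$ (correlation at least $1-\eta$ with $\wb^\star$, where $\eta\to0$, $\mu\eta\to\infty$), whose exceedance event is \emph{absorbed} into that of $\wb^\star$ because the noise increments there are uniformly controlled by \lemref{xi_bound}(2), and the moderate-overlap zone $\Ucal\setminus\Ucal_\eta$, where the mean is at most $\mu(1-\eta)$, i.e.\ short of the threshold by $\mu\eta\to\infty$, while the local noise supremum is $O_\PP(1)$; no counting of shapes or locations is ever needed. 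Relatedly, the $(4d-1)\log(v_n)$ term in \eqref{u_m} comes from the null excursion asymptotics over the $2d$-dimensional scale-location parameter space (\lemref{chan_constant} with $p=2d$), not from closing an alternative-side union bound.

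Two further remarks. Your lower bound also needs more care than ``excising a bounded-diameter neighborhood'': in the multiscale setting $\Rcal_{\rm near}$ contains rectangles of every admissible shape touching $R^\star$, so it is not of bounded diameter; the paper instead shows $\PP\{\sup_{\Ucal}\Xi > u\}=o(1)$ and sandwiches, rather than re-running Chen--Stein on the restricted class. Finally, your limit $1-(1-\alpha)(1-\bar\Phi(-c))=\alpha+(1-\alpha)\bar\Phi(-c)$ is the internally consistent value (it tends to $1$ as $c\to\infty$); the theorem's displayed $\bar\Phi(c)$ appears to be a sign slip in the paper, so this part of your computation is not a defect.
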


Compared with the oracle scan test (see \thmref{oracle_type2}), the multiscale scan test (at the same level) has strictly less asymptotic power in general.  For example, suppose that $\underline h \asymp n^a$ and $h_j^\star \asymp n^b$ for all $j$, for some fixed $0 < a < b < 1$.  In that case, to have power tending to one, the oracle scan requires $\mu - \sqrt{1-b} \sqrt{2 d \log n} \to \infty$, while the multiscale scan requires $\mu - \sqrt{1-a} \sqrt{2 d \log n} \to \infty$.

\subsubsection{Computational complexity}
Using the FFT, the multiscale scan statistic can be computed in $
O\big((n^{2d}/\underline h^d) \log n \big)$ time, since each shape can be scanned in $O\big(n^{d} \log n \big)$ as we saw in \secref{comp_o}, and there are $O(n^d / \underline h^d)$ shapes in total in~$\Rcal$.

\subsection{The adaptive multiscale scan}

While the multiscale scan uses the same threshold $u_m$ for all rectangle sizes, it ignores the fact that detecting small rectangles (at the finer scales) is more difficult than detecting large rectangles.
The approach advocated in \citep{MR2604703,cluster} is a refinement of the multiscale scan in that a different threshold is used at each scale (i.e., rectangle size).
Formally, given (possibly) shape-dependent critical values $u_\hb$, the {\em adaptive multiscale scan test} is  
\beq \label{scan_a}
T_a(\yb) = I \Big\{ y[R] > u_\hb, \textrm{ for some } \hb \in [\underline h, \overline h]^d \text{ and } R \in \Rcal(\hb) \Big\}.
\eeq
If in fact $u$ does not depend on $\hb$, then this is the multiscale scan test \eqref{scan_m}.

\subsubsection{Asymptotic theory}
Define the following shape-dependent critical value 
\beq \label{u_a}
u_{n,\hb}(\tau) = v_{n,\hb} + \frac{(4d-1) \log(v_{n,\hb}) + \kappa + \tau}{v_{n,\hb}},
\eeq
where 
\beq \label{v_a}
v_{n,\hb} = \sqrt{2 \smallsum_j \log \big[\frac{n}{h_j} \big(1 + \log \frac{h_j}{\underline h}\big)^2\big]}, \quad \kappa = - \log (4^d \sqrt {2 \pi}).
\eeq

\begin{theorem}
\label{thm:adaptive_type1}
Suppose that $\underline h = \omega(\log n)$.
The adaptive multiscale scan test \eqref{scan_a} with critical value \eqref{u_a}-\eqref{tau}, has the following asymptotic size 
\[ 
\lim_{n \rightarrow \infty} \PP_0 \{T_a(\yb) = 1\} = 1 - e^{-e^{-\tau}} = \alpha.
\]
\end{theorem}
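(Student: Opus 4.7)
The plan is to follow the program of \cite{kabluchko2011extremes}: combine the Chen-Stein Poisson approximation \citep{arratia1989two} with the tail asymptotics for locally stationary Gaussian fields developed in \cite{chan2006maxima}. The novel ingredient here is that the threshold $u_{n,\hb}(\tau)$ depends on the shape $\hb$, and the correction $(1+\log(h_j/\underline h))^2$ inside $v_{n,\hb}$ is calibrated precisely so that the Poisson mean, summed over all shapes, yields $e^{-\tau}$. First, I would rewrite the event $\{T_a(\yb)=1\}$ as $\{\max_\hb W_\hb > \tau\}$, where $W_\hb := v_{n,\hb}(Z_{n,\hb}-v_{n,\hb}) - (4d-1)\log v_{n,\hb} - \kappa$; since $1-\exp(-e^{-\tau_\alpha}) = \alpha$, it suffices to show that $\max_\hb W_\hb$ converges weakly to a standard Gumbel.

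Next I would apply Chen-Stein to a sub-lattice count. Writing $R_{\vb,\hb}$ for the rectangle with corner $\vb$ and shape $\hb$, the underlying field has covariance $\mathrm{Cov}(y[R_{\vb,\hb}], y[R_{\vb',\hb'}]) = |R_{\vb,\hb}\cap R_{\vb',\hb'}|/\sqrt{|R_{\vb,\hb}|\,|R_{\vb',\hb'}|}$, which is locally $1 - O(\sum|v_j-v_j'|/h_j + \sum|h_j-h_j'|/h_j)$. Fix a sub-lattice $\Lambda_n \subset [n]^d \times [\underline h,\overline h]^d$ whose spacing in $\vb$ is proportional to $h_j/v_{n,\hb}^2$ and whose spacing in $\hb$ is geometric with ratio $1 + c/v_{n,\hb}^2$, both matched to this decorrelation scale. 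Let $I_{\vb,\hb} = \mathbbm{1}\{y[R_{\vb,\hb}] > u_{n,\hb}(\tau)\}$ and $N = \sum_{(\vb,\hb)\in\Lambda_n} I_{\vb,\hb}$. The Chen-Stein error terms $b_1$ (self-interaction on neighbourhoods) and $b_2$ (clumping) are both $o(1)$ by standard Gaussian tail bounds together with the sub-lattice separation, so $N \Rightarrow \mathrm{Poisson}(\EE[N])$.

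The heart of the argument is computing $\EE[N]$. Since $y[R_{\vb,\hb}]$ is standard normal under $H_0$, Mills' ratio and the definitions of $u_{n,\hb}$ and $v_{n,\hb}$ give
\[
\bar\Phi(u_{n,\hb}(\tau)) \sim \frac{1}{\sqrt{2\pi}\,v_{n,\hb}^{4d}}\,e^{-\kappa-\tau}\,\prod_{j=1}^d \frac{h_j}{n\,(1+\log(h_j/\underline h))^2}.
\]
Multiplying by the $\vb$-sub-lattice density $\prod_j n v_{n,\hb}^2/h_j$ and by the effective $\hb$-density (the geometric $\hb$-sub-lattice of ratio $1+c/v_{n,\hb}^2$ introduces an effective factor $v_{n,\hb}^2/h_j$ per coordinate), the per-coordinate mean contribution becomes proportional to $\sum_{h_j=\underline h}^{\overline h}[h_j(1+\log(h_j/\underline h))^2]^{-1}$, which via the substitution $t_j = \log(h_j/\underline h)$ and $\underline h = \omega(\log n)$ (so the Riemann sum approximates its integral) converges to $\int_0^\infty (1+t)^{-2}\,dt = 1$. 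The Pickands-type constants absorb into the prefactor determined by $\kappa = -\log(4^d\sqrt{2\pi})$, so $\EE[N] \to e^{-\tau}$. Finally, one shows that $\{T_a = 1\}$ and $\{N \ge 1\}$ agree up to $o(1)$ using the $O_\PP(v_{n,\hb}^{-2})$ fluctuation of $y[R_{\vb,\hb}]$ under sub-lattice-scale perturbations of $(\vb, \hb)$.

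The main obstacle is this mean calibration. The correction $(1+\log(h_j/\underline h))^2$ in $v_{n,\hb}$ is engineered precisely so that the scale-sum reduces to the convergent integral $\int_0^\infty(1+t)^{-2}\,dt = 1$; matching the Pickands, Mills, and sub-lattice constants to produce exactly $e^{-\tau}$---and not an erroneous multiple of it---is what fixes the exponent $(4d-1)$ and the form of $\kappa$, and requires careful $d$-dependent bookkeeping. A secondary technicality is the cross-scale Chen-Stein step, where nearby shapes $\hb,\hb'$ must decorrelate fast enough; this is handled by the geometric $\hb$-spacing matching the local covariance drop-off rate.
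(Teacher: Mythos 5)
Your overall program (Chen--Stein plus a scale calibration that makes the Poisson mean telescope to $e^{-\tau}$) is the right one, and you have correctly identified the role of the $(1+\log(h_j/\underline h))^2$ correction: your integral $\int_0^\infty(1+t)^{-2}dt=1$ is exactly the paper's telescoping sum $\sum_{k\ge1}[k^{-1}-(1+k)^{-1}]=1$. But there is a genuine gap at the step where you pass from a thinned sub-lattice count $N$ to the full scan. With location spacing $\propto h_j/v_{n,\hb}^2$ and geometric shape spacing $1+c/v_{n,\hb}^2$, the quantity $\EE[N]$ you compute depends multiplicatively on the (arbitrary) spacing constants: redoing your bookkeeping, each coordinate contributes a factor inversely proportional to the product of the two spacing constants, so $\EE[N]$ converges to a constant multiple of $e^{-\tau}$, not to $e^{-\tau}$ itself, unless the constants are tuned. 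Worse, the two claims you need --- $\EE[N]\to e^{-\tau}$ and $\PP\{T_a=1\}=\PP\{N\ge1\}+o(1)$ --- are in tension: with a fixed spacing constant the sub-lattice maximum and the full-grid maximum differ at exactly the level that shifts the Gumbel location (the ratio of the corresponding exceedance probabilities per clump is a discrete-Pickands-to-continuous-Pickands ratio that tends to $1$ only as the spacing constant tends to $0$, at which point $\EE[N]$ no longer counts clumps). Saying that ``the Pickands-type constants absorb into the prefactor determined by $\kappa$'' is assuming precisely what must be proven; the value $\kappa=-\log(4^d\sqrt{2\pi})$ is the output of that local computation, not an input.

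The paper resolves this by never counting individual exceedance points: it partitions the parameter space into blocks $K_{(\hb,\tb)}$ covering a factor-$e$ range of scales and an $\hb$-sized range of locations, computes each block's exceedance probability \emph{with the shape-varying boundary} via the non-constant boundary-crossing theorem of Chan--Lai (\lemref{chan_nonconstant}), using Kabluchko's identification of the local structure and high-excursion intensity $\Lambda(\hb,\tb)=4^{-d}\prod_j h_j^{-2}$ through the tangent process (sums of drifted Brownian motions, Pickands constant $H_1=1$); this is where $4^d$ and the $(4d-1)$ exponent come from (\lemref{nonconstant_scan}). Chen--Stein is then applied to these $O(1)$-dependent block events (\lemref{low_scale_adascan}), with a separate dominated-convergence argument killing the contribution of scales above $\underline h e^A$ (\lemref{up_scale_adascan}) --- a step your proposal omits entirely, and which is needed because the scale sum is only conditionally controlled through the uniform bound of \lemref{nonconstant_bound}. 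Finally, note that the hypothesis $\underline h=\omega(\log n)$ is not about Riemann-sum accuracy of the scale integral, as you suggest; its real role is in the discrete-to-continuous transfer (\lemref{Z_epsnet} makes the integer lattice a $\sqrt{4d/\underline h}$-covering, and \lemref{approx_tau} together with the Lipschitz property of the shape-dependent threshold, \lemref{ua_lipschitz}, needs $\sqrt{4d/\underline h}\cdot\sqrt{\log n}\to0$), which is what justifies using the continuum constants (hence the stated $\kappa$) for the discrete scan.
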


\begin{theorem}
\label{thm:adaptive_type2}
Suppose that $\min_j h_j^\star = \omega(\log n)$. 
The adaptive multiscale scan test \eqref{scan_m} with critical value \eqref{u_m}-\eqref{tau} has the following asymptotic power 
\[
\lim_{n \rightarrow \infty} \inf_{\xb \in \Xcal(\hb^\star)} \PP_\xb \{ T_a(\yb) = 1 \} = 
\begin{cases}
1, &\mu - v_{n,\hb^\star} \rightarrow \infty,\\
\alpha + (1 - \alpha) \bar\Phi(c), &\mu - v_{n,\hb^\star} \rightarrow c, \textrm{ for some } c \in \RR,\\
\alpha, &\textrm{otherwise},
\end{cases}\]
where $v_{n,\hb}$ is defined in \eqref{v_a}.
\end{theorem}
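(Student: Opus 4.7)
The plan is to parallel the argument of \thmref{adaptive_type1} (the Type~I result), isolating a single ``signal rectangle'' and then reducing the remainder to a noise-only adaptive scan via a Chen--Stein/Poisson approximation, as in \cite{kabluchko2011extremes,chan2006maxima}.

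First, I would reduce the infimum over $\xb \in \Xcal_\mu(\hb^\star)$ to a single worst-case signal $\xb^\dagger$ supported on one anchor rectangle $R^\star \in \Rcal(\hb^\star)$, scaled so that $x^\dagger[R^\star] = \mu$.  This signal minimizes (asymptotically) the rejection probability: for every other $R \in \Rcal(\hb)$ and every $\hb \in [\underline h, \overline h]^d$, one checks by the Cauchy--Schwarz-type bound $x^\dagger[R] = \mu\,|R\cap R^\star|/\sqrt{|R|\,|R^\star|} \le \mu$ with strict inequality off $R = R^\star$, and $x^\dagger[R] = 0$ whenever $R$ is disjoint from $R^\star$. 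For this signal, decompose the rejection event as $E_0 \cup E_1$, where
\[
E_0 = \{ y[R^\star] > u_{n,\hb^\star}(\tau) \}, \qquad
E_1 = \{ y[R] > u_{n,\hb}(\tau) \text{ for some } (\hb,R) \ne (\hb^\star, R^\star),\ R \in \Rcal(\hb)\}.
\]
Since $y[R^\star]\sim\Ncal(\mu,1)$, one gets directly $\PP(E_0) = \bar\Phi(u_{n,\hb^\star}(\tau) - \mu)$, and because $u_{n,\hb^\star}(\tau) - v_{n,\hb^\star} = o(1)$ by \eqref{u_a}, the limit of $u_{n,\hb^\star}(\tau)-\mu$ is governed entirely by that of $\mu - v_{n,\hb^\star}$, producing the three regimes in the statement.

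Next I would show $\PP(E_1) \to \alpha$ and $\PP(E_0 \cap E_1) = \PP(E_0)\PP(E_1)(1+o(1))$, so that $\PP(E_0 \cup E_1)$ converges to the claimed $\alpha + (1-\alpha)\bar\Phi(\cdot)$ quantity.  For rectangles $R$ disjoint from $R^\star$, $x^\dagger[R]=0$ and $\xi[R]$ is independent of $\xi[R^\star]$, so those terms contribute exactly as under the null, and the Chen--Stein Poisson approximation from \thmref{adaptive_type1} yields both $\PP(E_1) \to \alpha$ and independence from $E_0$ on that part.  For rectangles overlapping $R^\star$, the signal boost $x^\dagger[R]$ is bounded by $\mu$ and the number of such rectangles grows only polynomially in $\prod h_j^\star$, so their contribution to the expected exceedance count at shape $\hb$ is dwarfed by the factor $n^d/\prod h_j$ built into $v_{n,\hb}$, ensuring that neither $\PP(E_1)$ nor the covariance with $E_0$ is changed in the limit.

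The main obstacle will be handling the overlapping rectangles of a shape $\hb \ne \hb^\star$: they pick up a partial signal $x^\dagger[R] = \mu\,|R\cap R^\star|/\sqrt{|R|\,|R^\star|}$ and their noise terms are correlated with $\xi[R^\star]$, so the Chen--Stein bound cannot be invoked as a black box.  Here I expect to need a careful first- and second-moment calculation of the exceedance count at shape $\hb$ under the tilted field $\xi[\cdot] + x^\dagger[\cdot]$, leveraging the fact that the $(1+\log(h_j/\underline h))^2$ inflation in \eqref{v_a} is precisely calibrated so that the boundary-crossing probabilities of \cite{chan2006maxima} for such locally-stationary fields still sum to the correct Gumbel rate across scales. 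Once that calibration is in place, the rest of the argument is a routine transcription of the Type~I proof with an extra independent $\Ncal(\mu,1)$ term at $R^\star$.
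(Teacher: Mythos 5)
There is a genuine gap in the middle of your plan: the decomposition into $E_0=\{y[R^\star]>u_{n,\hb^\star}\}$ and $E_1=\{$exceedance at any other rectangle$\}$, together with the claims $\PP(E_1)\to\alpha$ and asymptotic independence of $E_0$ and $E_1$, is false in the critical regime $\mu-v_{n,\hb^\star}\to c$. The culprit is not only rectangles of shape $\hb\ne\hb^\star$: the same-shape rectangles obtained by shifting $R^\star$ by a few pixels carry mean $\mu\,|R\cap R^\star|/\sqrt{|R||R^\star|}=\mu-o(1)$ (since $\min_j h_j^\star=\omega(\log n)$ while $\mu\asymp\sqrt{\log n}$) and their noise is almost perfectly correlated with $\xi[R^\star]$, so each of them exceeds its (identical) threshold with probability tending to the same constant as $E_0$ does. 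Consequently $\PP(E_1)$ tends to the full limit $\alpha+(1-\alpha)\bar\Phi(c)$ rather than to $\alpha$, and $E_1$ is strongly positively dependent on $E_0$. Your proposed remedy for the overlap region --- a first- and second-moment calculation of the exceedance count, relying on the $(1+\log(h_j/\underline h))^2$ calibration in \eqref{v_a} --- cannot close this gap, because the contribution of the high-overlap rectangles is not asymptotically negligible (their expected exceedance count is in fact enormous); what must be shown is that all of these exceedance events \emph{coalesce} with $E_0$ in the limit, which is a continuity statement about the field near $\wb^\star$, not a moment bound.

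This is exactly how the paper proceeds: it works with $\Upsilon(\wb)=m(\wb)+\Xi(\wb)$ and splits the index set $\Wcal$ into a shrinking high-correlation neighborhood $\Ucal_\eta$ of $\wb^\star$, an intermediate shell $\Ucal\setminus\Ucal_\eta$, and the far region $\Wcal\setminus\Ucal$, choosing $\eta\to0$ with $\mu\eta\to\infty$. On $\Ucal_\eta$ the sup behaves like the single value $\Upsilon(\wb^\star)$, using the local boundedness/continuity of $\Xi$ (\lemref{xi_bound}, Part 2) together with the Lipschitz property of the shape-dependent threshold (\lemref{ua_lipschitz}), giving the $\bar\Phi(c)$ term; on $\Ucal\setminus\Ucal_\eta$ the mean is at most $\mu(1-\eta)$ so the exceedance probability vanishes; on $\Wcal\setminus\Ucal$ the field is independent of the neighborhood and behaves as under $H_0$ (via \eqref{W_alpha}, i.e.\ the Chen--Stein argument of \thmref{adaptive_type1}), giving the $\alpha$ term, and matching upper and lower bounds for the union follow from this exact independence. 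The pieces of your plan concerning the worst-case indicator signal, the disjoint rectangles, and the transfer from the continuous to the discrete scan (\lemref{approx_tau} and \lemref{Z_epsnet}) are in line with the paper, but the treatment of the overlap neighborhood needs to be replaced by a coalescence argument of this type rather than a moment calculation.
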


The adaptive multiscale scan test (at the same level) happens to achieve the same asymptotic power as the oracle scan (see \thmref{oracle_type2}) in the important case where $\hb^\star$ is not too large.  
Indeed, suppose for example that $\min_j h_j^\star = O(n^b)$ for some fixed $0 < b < 1$.  Letting $v_n^\star$ denote the $v_n$ in \eqref{v_o}, we obviously have $v_{n, \hb^\star} \ge v_n^\star$, and also 
\[
v_{n, \hb^\star} 
\le v_n^\star \sqrt{1 + (v_n^\star)^{-2} d \log \log n}
\le v_n^\star \big[1 + \tfrac12 (v_n^\star)^{-2} d \log \log n\big]
= v_n^\star + O\Big(\frac{\log\log n}{\sqrt{\log n}}\Big) 
= v_n^\star + o(1),
\]
so that $v_{n, \hb^\star} = v_n^\star + o(1)$.

\subsubsection{Computational complexity}
The computational cost for computing the adaptive multiscale scan is the same as that for computing the multiscale scan, i.e., $
O\big((n^{2d}/\underline h^d) \log n \big)$ time.

\subsection{Approximate adaptive multiscale scan}
\label{sec:eps-scan}

The computational complexity of the adaptive multiscale scan, which is quadratic in the grid size, may be prohibitive in some situations.  We provide now an algorithm that has nearly linear computation time while achieving the same asymptotic power.  
Inspired by the multiscale approximation developed in \citep{MGD,cluster,MR2604703}, we accomplish this by effectively scanning only over a subset of the rectangles that form an $\epsilon$-covering for $\Rcal$.
We recall that, given a metric $\delta$ over $\Rcal$, $\Rcal_\eps \subset \Rcal$ is an $\eps$-covering of $\Rcal$ for $\delta$ if, for all $R \in \Rcal$, there is $R' \in \Rcal_\eps$ such that $\delta(R, R') \le \eps$.  
Recall the definition of $\xi$ in \eqref{model}.
We use the canonical metric for the Gaussian random field $\{\xi[R], R \in \Rcal\}$, which is given by 
\beq \label{delta}
\delta^2(R_0, R_1) = \EE (\xi[R_0] - \xi[R_1])^2 = 2 \bigg( 1 - \frac{|R_0 \cap R_1|}{\sqrt{|R_0||R_1|}} \bigg), \quad \forall R_0, R_1 \in \Rcal.
\eeq
Given an $\epsilon$-covering $\Rcal_\epsilon$ and (possibly) shape-dependent critical values $u_\hb$, the {\em $\eps$-adaptive multiscale scan test} is  
\beq \label{scan_eps}
T_\eps(\yb) = I \Big\{ y[R] > u_\hb, \textrm{ for some } \hb \in [\underline h, \overline h]^d \text{ and } R \in \Rcal(\hb) \cap \Rcal_\eps \Big\}.
\eeq

\subsubsection{Asymptotic theory}

Ideally, we would like to select $\epsilon$ small enough (in fact, decreasing with $n$) that the $\epsilon$-adaptive multiscale scan statistic has asymptotically the same distribution as the (full) adaptive multiscale scan statistic.
As it turns out, it is sufficient to select $\epsilon^{-1}$ on the order of $\sqrt{\log n}$ for this to occur.
We will find that with this choice of $\epsilon$ it is possible to construct an algorithm that can perform an $\epsilon$-covering scan in near-linear time.

Consider critical values of the form \eqref{u_a}-\eqref{tau} and define the following P-value
\beq \label{pval}
\hat \alpha_{n,\hb}(z) = \inf \{ \alpha \in (0,1) :  z \ge u_{n,\hb}(\tau_\alpha)\}.
\eeq
Then the P-value associated with the adaptive multiscale scan test is 
\beq \label{pval_scan}
\hat \alpha_n = \min \Big\{\hat \alpha_{n,\hb}(y[R]) : \hb \in [\underline h, \overline h]^d , R \in \Rcal(\hb) \Big\}.
\eeq
Analogously, the P-value associated with the $\eps$-adaptive multiscale scan test is
\beq \label{pval_eps_scan}
\hat \alpha_{n,\eps} = \min \Big\{\hat \alpha_{n,\hb}(y[R]) : \hb \in [\underline h, \overline h]^d , R \in \Rcal(\hb) \cap \Rcal_\eps\Big\}.
\eeq

\begin{theorem}
\label{thm:epsilon}
Consider the P-value for the multiscale scan or the adaptive multiscale scan, and $\eps$-covering analog, defined in \eqref{pval_scan}-\eqref{pval_eps_scan} respectively. 
Assuming $\epsilon \sqrt{\log n} \rightarrow 0$, we have 
\[
|\hat \alpha_{n,\epsilon} - \hat \alpha_n | = o_\PP(1), \quad n \to \infty.
\]
\end{theorem}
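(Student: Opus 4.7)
The plan is to use the sure inequality $\hat\alpha_{n,\eps} \ge \hat\alpha_n$, which is immediate from $\Rcal_\eps \subset \Rcal$ in definitions \eqref{pval_scan}-\eqref{pval_eps_scan}; only the reverse bound $\hat\alpha_{n,\eps} - \hat\alpha_n \le o_\PP(1)$ requires work. To that end, let $R^\dagger \in \Rcal$ achieve the minimum defining $\hat\alpha_n$, let $\hb^\dagger$ be its shape, and pick $R' \in \Rcal_\eps$ with $\delta(R^\dagger, R') \le \eps$ and shape $\hb'$; then
\[
\hat\alpha_{n,\eps} - \hat\alpha_n \le \hat\alpha_{n,\hb'}(y[R']) - \hat\alpha_{n,\hb^\dagger}(y[R^\dagger]),
\]
and it suffices to control this difference uniformly in the (random) choice of $R^\dagger$.

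Two ingredients drive the estimate. First, the P-value function \eqref{pval} is continuously differentiable: solving $u_{n,\hb}(\tau)=z$ in \eqref{u_a} for $\tau$ and substituting into $\alpha=1-\exp(-e^{-\tau})$ yields the Lipschitz bound $|d\hat\alpha_{n,\hb}/dz|\le v_{n,\hb}/e = O(\sqrt{\log n})$, so perturbations of $z$ by $\eta$ produce perturbations of the P-value of order $v_n\eta$. Second, the canonical metric \eqref{delta} ensures $\mathrm{Var}(\xi[R^\dagger]-\xi[R'])\le\eps^2$; under the alternative, the signal increment $|x[R^\dagger]-x[R']|$ is $O(\eps)$ via the overlap bound $|R^\dagger\cap R'|\ge(1-\eps^2/2)\sqrt{|R^\dagger||R'|}$ implied by $\delta(R^\dagger,R')\le\eps$, and the shape discrepancy $v_{n,\hb'}-v_{n,\hb^\dagger}\to 0$ follows by direct inspection of \eqref{v_a} under the same overlap.

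The main obstacle is uniformity in the data-dependent choice of $R^\dagger$. A crude union bound over $|\Rcal|=O(n^{2d})$ gives only $\max_R|\xi[R]-\xi[R']|=O_\PP(\eps\sqrt{\log n})$, which after multiplication by the Lipschitz constant $v_n$ produces $O_\PP(\eps\log n)$ — off by a $\sqrt{\log n}$ factor from the stated hypothesis $\eps\sqrt{\log n}\to 0$. The resolution is a localization-plus-joint-tail argument in the spirit of the Chen--Stein Poisson approximation of \cite{kabluchko2011extremes} and \cite{chan2006maxima}. For any fixed $T$, the near-extremal set $\mathcal{L}_T=\{R: y[R]\ge u_{n,\hb(R)}(-T)\}$ contains $R^\dagger$ with probability $\to 1-e^{-e^T}$; exploiting the near-independence $\mathrm{Corr}(\xi[R],\xi[R]-\xi[R'])\le\eps/2$ implied by the canonical metric, a joint-tail estimate of order $\bar\Phi(u_n(-T))\bar\Phi(\eta/\eps)$ followed by a union bound over $R\in\Rcal$ gives $\max_{R\in\mathcal{L}_T}|\xi[R]-\xi[R']|=O_\PP(\eps)$, because the factor $|\Rcal|\bar\Phi(u_n(-T))$ is bounded (this is exactly the Poisson intensity driving \thmref{adaptive_type1}). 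Choosing $\eta$ with $\eps=o(\eta)$ and $\eta v_n=o(1)$ — which exists precisely when $\eps\sqrt{\log n}\to 0$ — composition with the Lipschitz estimate produces $\hat\alpha_{n,\eps}-\hat\alpha_n=o_\PP(1)$ upon letting $T\to\infty$.
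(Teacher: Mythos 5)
Your setup (one-sided monotonicity, the Lipschitz bound $|d\hat\alpha_{n,\hb}/dz|\le v_{n,\hb}/e$, the variance bound from the canonical metric) is sound, and you correctly identify the crux: a crude uniform bound costs a factor $\sqrt{\log n}$ beyond the hypothesis $\eps\sqrt{\log n}\to 0$. But the step you propose to close that gap rests on a false premise. The quantity $|\Rcal|\,\bar\Phi(u_n(-T))$ is \emph{not} bounded, and it is not the Poisson intensity driving \thmref{adaptive_type1}. The intensity in the paper is the expected number of exceedance \emph{clusters}, computed blockwise through the high-excursion intensity $\Lambda$ via \lemref{chan_constant}--\lemref{chan_nonconstant}; the expected number of individual exceeding rectangles is larger by roughly the cluster size, on the order of $\prod_j (h_j/\log n)^2$, hence diverges polynomially in $n$ (e.g.\ for the multiscale scan, $|\Rcal|\bar\Phi(u_n(-T))$ is of order $(\overline h\,\underline h)^d$ up to logarithmic factors). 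Consequently your union bound over $R\in\Rcal$ of the joint tail $\bar\Phi(u_n(-T))\bar\Phi(\eta/\eps)$ forces $\eta\gtrsim\eps\sqrt{\log n}$, and after composing with the Lipschitz constant $v_n\asymp\sqrt{\log n}$ you are back to needing $\eps\log n\to 0$ --- exactly the loss you set out to remove. Repairing this would require either a cluster-level (Chen--Stein) decomposition combined with a chaining/maximal inequality for the increments $\xi[R]-\xi[R']$ \emph{within} a cluster, or some other device; a per-rectangle union bound cannot work. A secondary issue: the signal increment is $O(\mu\eps)$, not $O(\eps)$, so the regime of large $\mu$ needs separate treatment (the paper disposes of $\mu=\omega(\sqrt{\log(n/\underline h)})$ by noting that then both P-values tend to $0$).

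For comparison, the paper does not try to localize at the argmin at all. It compares both the full discrete scan and the $\eps$-covering scan to the continuous-field statistic $\tilde\tau$ of \eqref{eq:tilde_tau}: \lemref{Z_epsnet} shows $\Wcal\cap\ZZ^{2d}$ is itself a $\sqrt{4d/\underline h}$-covering, \lemref{xi_bound} (Dudley's entropy bound with \lemref{entropy}) controls $\|\Xi-\Xi_\eps\|_\infty$, and \lemref{approx_tau} together with the Lipschitz property of the adaptive thresholds (\lemref{ada_lipschitz}) yields $|\hat\tau-\tilde\tau|=o_\PP(1)$ and $|\hat\tau_\eps-\tilde\tau|=o_\PP(1)$, after which the continuous mapping theorem transfers the statement to the P-values; the price of this route is the condition \eqref{ab_eps}, so the $\sqrt{\log n}$ tension you noticed is absorbed there rather than eliminated by localization. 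Your outline is a genuinely different and potentially sharper strategy, but as written the key joint-tail/union-bound step fails, so the argument has a real gap.
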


This implies that any such $\epsilon$-scan test enjoys the same asymptotic size and power as the corresponding full scan, established in Theorems~\ref{thm:multiscale_type1} and~\ref{thm:multiscale_type2} for the multiscale scan, and in Theorems~\ref{thm:adaptive_type1} and~\ref{thm:adaptive_type2} for the adaptive multiscale scan. 

\subsubsection{Implementation and computational complexity}

The computational complexity of a scan over an $\eps$-covering depends, of course, on how the $\eps$-covering is designed.
We refer the reader to \citep{MGD,cluster,MR2604703} for some existing examples in the literature.  
We design here another $\eps$-covering which we find easier to scan over in practice. 
Specifically, assuming that $n$ is a power of 2 for convenience, we consider
\beq \label{eq:R_eps}
\Rcal_\epsilon = \bigcup_{\ab \in [ \log_2 n  ]^d }\Big\{ [2^{\ab} \circ \tb, 2^{\ab} \circ (\tb + \fb)] : f_j \in [\lceil 8d / \epsilon^2 \rceil], t_j \in [n / 2^{a_j}], \forall j \in [d]\Big\}. 
\eeq

\begin{proposition} \label{prop:eps-cover}
Suppose that $\epsilon^2 \underline h \rightarrow \infty$ as $n \to \infty$.
When $n$ is large enough, $\Rcal_\eps$ defined in \eqref{eq:R_eps} is indeed an $\eps$-covering of $\Rcal$ for the metric $\delta$ defined in \eqref{delta}.
\end{proposition}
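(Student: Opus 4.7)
The plan is to exhibit, for each $R \in \Rcal$, an explicit approximant $R' \in \Rcal_\eps$ whose shape and anchor point each match those of $R$ to within a step of order $\eps^2 h_j/d$ in coordinate $j$. Setting $K := \lceil 8d/\eps^2 \rceil$ and fixing $R = \times_j [v_j, v_j + h_j]$ with $h_j \in [\underline h, \overline h]$, I would choose in each coordinate $a_j := \lceil \log_2(h_j/K) \rceil$, so that $A_j := 2^{a_j} \in [h_j/K,\, 2h_j/K)$. The hypothesis $\eps^2 \underline h \to \infty$ makes $h_j/K \to \infty$ uniformly in $h_j \ge \underline h$, giving $a_j \ge 1$ for $n$ large, while $h_j \le \overline h \le n/e$ yields $a_j \le \log_2 n$; hence $a_j \in [\log_2 n]$. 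I then take $f_j \in \{1, \ldots, K\}$ to be a nearest integer to $h_j/A_j \in (1/2, K]$ and $t_j \in [n/A_j]$ to be a nearest integer to $v_j/A_j$ (with a one-unit adjustment near the boundary of $[n]$ when necessary). The resulting $R' := \times_j [A_j t_j,\, A_j(t_j + f_j)]$ belongs to $\Rcal_\eps$ and, writing $h_j' := A_j f_j$ and $v_j' := A_j t_j$, satisfies $|h_j - h_j'| \le A_j/2$ and $|v_j - v_j'| \le A_j$.

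Next I would compute $\delta^2(R, R')$ by factorization over coordinates. Since $R$ and $R'$ are Cartesian products of intervals,
\[
\frac{|R \cap R'|}{\sqrt{|R| \, |R'|}} = \prod_{j = 1}^d \rho_j, \qquad \rho_j := \frac{|R_j \cap R_j'|}{\sqrt{|R_j| \, |R_j'|}},
\]
where $R_j, R_j'$ denote the $j$-th coordinate intervals. The elementary one-dimensional bound $|R_j \cap R_j'| \ge \min(|R_j|, |R_j'|) - |v_j - v_j'|$, combined with $|h_j - h_j'| \le A_j/2$ and $|R_j| = h_j + 1$, yields after a short expansion $\rho_j \ge 1 - C\alpha_j$ for an absolute constant $C$ and $\alpha_j := A_j/h_j \le 2/K \le \eps^2/(4d)$. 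Bernoulli's inequality then produces $\prod_j \rho_j \ge (1 - C\eps^2/(4d))^d \ge 1 - C\eps^2/4$, whence $\delta^2(R, R') = 2\bigl(1 - \prod_j \rho_j\bigr) \le C\eps^2/2 \le \eps^2$ once the numerical constant is tracked. This is precisely why $K$ carries the factor $8d$ rather than $4d$.

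The only mildly delicate step will be uniformly controlling the discrete $+1$ corrections in $|R_j| = h_j + 1$ and the one-unit boundary adjustment in $t_j$: each contributes an additive error of order $1/h_j \le 1/\underline h$, which is $o(\eps^2)$ under the standing hypothesis $\eps^2 \underline h \to \infty$ and is therefore absorbed into $C$. No deeper obstacle is anticipated; the proof is essentially a dyadic rounding argument calibrated so that the covering granularity $1/K$ in each coordinate matches the squared distortion budget $\eps^2/(8d)$ available per dimension.
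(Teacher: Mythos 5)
Your construction is correct and is essentially the paper's own proof: both arguments pick a per-coordinate dyadic scale $2^{a_j}\asymp \eps^2 h_j/d$, round the shape and the anchor to multiples of $2^{a_j}$ (checking $a_j\in[\log_2 n]$, $f_j\le\lceil 8d/\eps^2\rceil$ via $\eps^2\underline h\to\infty$), and bound the per-coordinate distortion by roughly $\eps^2/(2d)$ before combining coordinates --- the paper through its inequality \eqref{eq:delta_bound}, you through the exact product identity plus Bernoulli, which amounts to the same thing. The only point to tighten is the final bookkeeping: the conclusion $\delta^2\le\eps^2$ forces your constant to satisfy $C\le 2$, which does hold with the bounds you state ($|h_j-h_j'|\le A_j/2$, $|v_j-v_j'|\le A_j$, $\alpha_j\le 2/K$, giving $\rho_j\ge 1-2A_j/(h_j+1+A_j/2)\ge 1-2\alpha_j$), and the discrete $+1$'s enter both the numerator and the denominator of $\rho_j$ and hence cancel, rather than needing to be ``absorbed into $C$'' (which, with $C$ already at the boundary value $2$, would otherwise not be affordable).
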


\algref{eps_rect} gives an efficient implementation of a scan over $\Rcal_\epsilon$.
As in \citep{MGD}, we start by summing $y$ over dyadic rectangles, which are defined as rectangles whose side lengths are a power of $2$.
Formally, let ${\rm dyad}_\ab$ denote the result of summing $y$ over all rectangles of shape $2^{\ab}$ with the top-left corner at a multiple of $2^{\ab}$, thought of as a field over the grid $[n 2^{-\ab}]$.
Using dynamic programming, computing $\{\textrm{dyad}_\ab : \ab \in [ \log_2 n  ]^d\}$ can be done in time $O(n^d)$.
This `coarsification' allows us to quickly form spatial approximations to the full spatial scan for a specific shape $\hb$.
Specifically, for a given dyadic scale given by $\ab \in [\log_2 n]^d$ and location and scale given by $t_j \in [n / 2^{a_j}]$, $f_j \in [\lceil 8d / \epsilon^2 \rceil]$ for all $j$, we have 
\[
y\big[[2^{\ab} \circ \tb, 2^{\ab} \circ (\tb + \fb)]\big] = \textrm{dyad}_\ab \big[[\tb, \tb+\fb] \big].
\]

We note that the P-values that appear on Line~\ref{line:pval} of \algref{eps_rect} can be defined in any way, and in particular could be based on other model assumptions.  Put differently, the sole purpose of \algref{eps_rect} is to compute the P-value \eqref{pval_eps_scan} for a given set of critical values in \eqref{pval}, which can be completely arbitrary.

\begin{algorithm}
\caption{Implementation of the $\eps$-adaptive multiscale scan over the $\eps$-covering defined in \eqref{eq:R_eps}.  $n$ is assumed to be a power of 2 for convenience.  The P-values can be as in \eqref{pval} or completely arbitrary, for example based on a different parametric model.}
\label{alg:eps_rect}
\begin{algorithmic}[1] 
\Require Field $\yb$ over $[n]^d$, integers $1 \le \underline h \le \overline h \le n$, $\eps$ such that $\eps^2 \underline h \ge 8d$, P-value functions $\hat \alpha_\hb$
\State Initialize $\textrm{dyad}_\one(\ib) = y(\ib), \forall \ib \in [n]^d$
\For{$\ab \in [\log_2 n]^d \backslash \{1\}^d$} \label{line:start_dyad_loop} 
\State $j' \gets \min \{ j \in [d] : a_j > 1 \}$ 
\For{$\tb \in [n/2^\ab]$}
\State \label{line:dyad} $\textrm{dyad}_\ab(\tb) \gets \textrm{dyad}_{\ab-\eb_{j'}}(\tb \circ (\one + \eb_{j'})) + \textrm{dyad}_{\ab-\eb_{j'}}(\tb \circ (\one + \eb_{j'}) - \eb_{j'})$
\EndFor
\EndFor \label{line:end_dyad_loop}
\State $\underline a \gets \lfloor \log_2 (\epsilon^2 \underline h / (4d)) \rfloor$
\State $\overline a \gets \lceil \log_2 (\epsilon^2 \overline h/(4d)) \rceil$
\State Initialize $\hat \alpha \gets 1$
\For{$\ab \in [\underline a, \overline a]^d$}
\For{$\fb \in [\lceil 8d / \epsilon^2 \rceil]^d$} 
\State \label{line:shat} $\hat s \gets \big( \tprod_j f_j 2^{a_j} \big)^{-\frac 12} \max_{\tb \in [n/2^{\ab}]} (\textrm{dyad}_\ab \ast b_{\fb})(\tb)$
\State \label{line:pval} $\hat \alpha \gets \min\left\{ \hat \alpha, \hat \alpha_{\fb \circ 2^\ab} (\hat s) \right\}$
\EndFor
\EndFor
\Ensure $\hat \alpha$
\end{algorithmic}
\end{algorithm}

\begin{proposition}
\label{prop:alg_eps_net}
Suppose that $\epsilon^2 \underline h \rightarrow \infty$ as $n \to \infty$.
When $n$ is large enough, \algref{eps_rect} performs a scan over $\Rcal_\epsilon$ defined in \eqref{eq:R_eps}.
\end{proposition}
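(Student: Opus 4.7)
I would structure the proof into three tasks: verifying the dyadic pre-computation, verifying the shape-by-shape scan built on top of it, and matching the enumerated rectangles with $\Rcal_\eps$ in the sense relevant for \eqref{scan_eps}.

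The first task is to prove, by strong induction on $\|\ab\|_1$, that after Lines~\ref{line:start_dyad_loop}--\ref{line:end_dyad_loop} one has
\[
\textrm{dyad}_\ab(\tb) = \sum_{\ib \in D_{\ab,\tb}} y(\ib), \qquad D_{\ab,\tb} := \big[2^\ab \circ (\tb-\one) + \one,\ 2^\ab \circ \tb\big],
\]
for every $\ab \in [\log_2 n]^d$ and $\tb \in [n/2^\ab]$. The base case $\ab = \one$ is the initialization; for the inductive step, $j' = \min\{j : a_j > 1\}$ is well-defined and splits $D_{\ab,\tb}$ along the $j'$-th axis into two disjoint dyadic boxes of scale $\ab - \eb_{j'}$ at the coarsened locations $\tb \circ (\one + \eb_{j'})$ and $\tb \circ (\one + \eb_{j'}) - \eb_{j'}$, which is exactly the recursion on Line~\ref{line:dyad}.

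With the dyadic sums in hand, for each $(\ab, \fb, \tb)$ visited by the main loop, the rectangle $R = [2^\ab \circ \tb,\, 2^\ab \circ (\tb + \fb)]$ tiles into $\prod_j f_j$ pairwise disjoint dyadic boxes of shape $2^\ab$, so
\[
(\textrm{dyad}_\ab \ast b_\fb)(\tb) = \sum_{\ib \in [\fb]} \textrm{dyad}_\ab(\tb+\ib) = \sum_{\ib \in R} y(\ib).
\]
Dividing by $\sqrt{\prod_j f_j 2^{a_j}} = \sqrt{|R|}$ on Line~\ref{line:shat} yields $\hat s = y[R]$, and the P-value invoked on Line~\ref{line:pval} is $\hat\alpha_{\fb \circ 2^\ab}(\hat s) = \hat\alpha_\hb(y[R])$ for the exact shape $\hb = \fb \circ 2^\ab$ of $R$. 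Consequently, the running minimum implements~\eqref{pval_eps_scan} restricted to the set of rectangles actually enumerated.

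Finally, I would match the enumeration with $\Rcal_\eps$. Using $\eps^2 \underline h \ge 8d$ and $\overline h \le n/e$, I would check that $1 \le \underline a$ and $\overline a \le \log_2 n$ for large $n$, so that every $\textrm{dyad}_\ab$ invoked in the main loop has been computed in Step~1. I would then argue that the pairs $(\ab, \fb) \in [\underline a, \overline a]^d \times [\lceil 8d/\eps^2\rceil]^d$ sweep out precisely the rectangles of $\Rcal_\eps$ whose shapes lie in $[\underline h, \overline h]^d$, which is exactly the index set entering~\eqref{pval_eps_scan}. The induction and the convolution identity are essentially routine; the main obstacle is this last bookkeeping check, namely reconciling the floor/ceiling rounding in $\underline a = \lfloor \log_2(\eps^2 \underline h/(4d))\rfloor$ and $\overline a = \lceil \log_2(\eps^2 \overline h/(4d))\rceil$ with the rigid dyadic factorization $\hb = \fb \circ 2^\ab$ to make sure that no required rectangle is missed and that no stray shape outside the range of interest is spuriously included in the minimum.
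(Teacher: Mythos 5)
Your proposal follows essentially the same route as the paper's proof: an induction on $\|\ab\|_1$ showing that $\textrm{dyad}_\ab(\tb)$ is the sum of $y$ over a dyadic box, followed by the disjoint-tiling identity $(\textrm{dyad}_\ab \ast b_\fb)(\tb) = (y \ast b_{2^\ab \circ \fb})(2^\ab \circ \tb)$, so that $\hat s$ on Line~\ref{line:shat} is the maximal Z-score over the corresponding rectangles of $\Rcal_\eps$ --- which is all the paper itself establishes before concluding. One caution on your final bookkeeping step: the loop over $\ab \in [\underline a, \overline a]^d$, $\fb \in [\lceil 8d/\eps^2\rceil]^d$ does not enumerate \emph{precisely} the rectangles of $\Rcal_\eps$ with shapes in $[\underline h, \overline h]^d$ (e.g., $\fb = \one$ at scale $\underline a$ yields sides of order $\eps^2 \underline h/(4d) \ll \underline h$), so the accurate claim is that it enumerates a subset of $\Rcal_\eps$ containing all the covering rectangles constructed in Proposition~\ref{prop:eps-cover} for shapes in $[\underline h, \overline h]^d$, which is what the result requires and all the paper asserts.
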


\begin{proposition}
\label{prop:eps_time}
\algref{eps_rect} requires on the order of
$
\max\big\{n^d, \epsilon^{-4d} (n/\underline h)^{d} \log n \big\}
$
basic operations.
\end{proposition}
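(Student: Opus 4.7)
My plan is to decompose the runtime of \algref{eps_rect} into two phases: (i) building the dyadic sums $\{\textrm{dyad}_\ab\}$ on lines \ref{line:start_dyad_loop}--\ref{line:end_dyad_loop}, and (ii) performing the boxcar convolutions on line \ref{line:shat} across all pairs $(\ab,\fb)$. The overall bound will then follow by summing geometric series in $\ab$ and multiplying by the $O(\eps^{-2d})$ enumeration cost over $\fb$.

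For phase (i), the update on line \ref{line:dyad} does $O(1)$ work per cell, and the grid $[n/2^\ab]$ has $n^d/2^{\|\ab\|_1}$ cells. Summing over all $\ab \in [\log_2 n]^d$ by factoring into a product of geometric sums,
\[
\sum_{\ab \in [\log_2 n]^d} \frac{n^d}{2^{\|\ab\|_1}} = n^d \prod_{j=1}^d \sum_{a_j=1}^{\log_2 n} 2^{-a_j} \le n^d,
\]
so the total cost of this phase (including the $O(n^d)$ initialization of $\textrm{dyad}_\one$) is $O(n^d)$.

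For phase (ii), I would compute each boxcar convolution $\textrm{dyad}_\ab \ast b_\fb$ (and its maximum over $\tb$) using a $d$-dimensional FFT on the grid of size $m_\ab := n^d/2^{\|\ab\|_1}$, at cost $O(m_\ab \log m_\ab) = O(m_\ab \log n)$ per pair $(\ab, \fb)$. There are $O(\eps^{-2d})$ values of $\fb$, so summing over $\ab \in [\underline a, \overline a]^d$ and $\fb$ the total work is bounded by
\[
O(\eps^{-2d} \log n) \cdot n^d \prod_{j=1}^d \sum_{a_j=\underline a}^{\overline a} 2^{-a_j} = O\bigl( \eps^{-2d} \log n \cdot n^d \cdot 2^{-d \underline a} \bigr).
\]
Since $\underline a = \lfloor \log_2(\eps^2 \underline h /(4d)) \rfloor$, one has $2^{\underline a} \asymp \eps^2 \underline h$, hence $n^d \cdot 2^{-d \underline a} = O(\eps^{-2d}(n/\underline h)^d)$, yielding $O(\eps^{-4d}(n/\underline h)^d \log n)$ for phase (ii). Adding the two phase costs gives the claimed bound. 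The calculation is routine; the only subtleties are (a) invoking FFT rather than naive convolution at line \ref{line:shat}, in order to pick up only a $\log n$ factor, and (b) telescoping the geometric sum over $\ab$ carefully so that the dominant contribution comes from the smallest scale $\underline a$ instead of being charged the full $n^d$ at every scale.
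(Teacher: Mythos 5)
Your proof is correct and follows essentially the same route as the paper's: an $O(n^d)$ bound for the dyadic-sum construction via the geometric series over $\ab$, and for the scanning phase an FFT cost of $O\big((n^d/2^{\|\ab\|_1})\log n\big)$ per pair $(\ab,\fb)$, multiplied by the $O(\eps^{-2d})$ choices of $\fb$ and summed geometrically over $\ab \ge \underline a\,\one$ with $2^{\underline a} \asymp \eps^2 \underline h$, giving $O(\eps^{-4d}(n/\underline h)^d \log n)$. No gaps; the argument matches the paper's proof of \propref{eps_time} step for step.
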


For example, if $\underline h = n^{a}$ for some fixed $a \in (0,1)$ and $\epsilon = (\log n)^{-1}$ (which is allowed by \thmref{epsilon}), then the computational complexity of $\epsilon$-AdaScan is of order $O(n^d)$, which is precisely linear in the grid size.

\section{Numerical experiments}
\label{sec:experiments}

In this section, we discuss some findings from simulation experiments.
In each of the following experiments, we will generate observations that conform to our assumptions, namely that the random field $y$ is drawn according to \eqref{model} and that there is a rectangular activation under $H_1$ as in \eqref{eq:test}.
We will consider three questions.
\benum
\item For finite $n$, does the adaptive test $T_a(\yb)$ have appreciably superior power compared to the multiscale test $T_m(\yb)$?  
\item For finite $n$, do the theoretically-derived thresholds \eqref{u_m} and \eqref{u_a} control the level of the tests $T_m(\yb)$ and $T_a(\yb)$ as desired?
\item What is the trade-off between computation time and statistical power as we vary $\epsilon$ in the adaptive $\eps$-scan (\algref{eps_rect})? 
\eenum
In all our experiments below, we consider the case of a discrete image ($d=2$) and the signal under the alternative is proportional to the indicator function of a rectangle, i.e., $x(\ib) = \mu / \sqrt{|R^\star|}$ for $\ib \in R^\star$ and $0$ otherwise, for some rectangle $R^\star$.  For the multiscale and adaptive scans, we set $\underline h = 6$.

The first experiment will address the effect that adapting has on the statistical power.
We consider a $256 \times 256$ image ($n = 256$).  We simulate $400$ times from both the null $H_0$ and each instance of the alternative $H_1$.  Under $H_1$, we set $\mu = 6$ and consider  three rectangle sizes ---  $34 \times 81$, $34 \times 38$ and $18 \times 15$ --- with the location of the activation rectangle being chosen uniformly at random.
For each method, we simulate the false discovery rate and the true discovery rate --- the fraction of the $400$ simulations drawn from $H_0$ that were rejected and the fraction from $H_1$ that were rejected, respectively --- and plot them as the parameter $\tau$ varies, producing a receiver operator characteristic (ROC) curve. 
For each rectangle, we compare four methods: the oracle that scans at the scale of $R^\star$, the multiscale scan, the adaptive scan, and a modified adaptive scan based on 
\[
\max_{\hb \in [\underline h, \overline h]^d} \Big(\max_{R \in \Rcal(\hb)} y[R] - v^{\rm mod}_{n, \hb}\Big) v^{\rm mod}_{n, \hb} , \quad {\rm where} \quad v^{\rm mod}_{n, \hb} = \sqrt{2 \tsum_j \log (n / h_j)}.
\]
Notice that $v_{n, \hb}$ is the dominating term in \eqref{v_a}.
Our findings (Figure \ref{fig1}) indicate that the adaptive scan test only marginally outperforms the multiscale scan test, while the modified adaptive test brings a more significantly improvement.
All these tests are closer and closer to the oracle test as size of the activation the rectangle increases.
Because the computational complexity of these tests is $O(n^2)$, evaluating the performance on significantly larger images was not feasible.
The conclusions that we can draw from this are that for images of moderate size, the effects of the lower order terms in the adaptive test inhibits the gains in power that we expect from \thmref{adaptive_type1}.

\begin{figure*}[!htbp]
\centering
\mbox{
\subfigure{\includegraphics[width=2in]{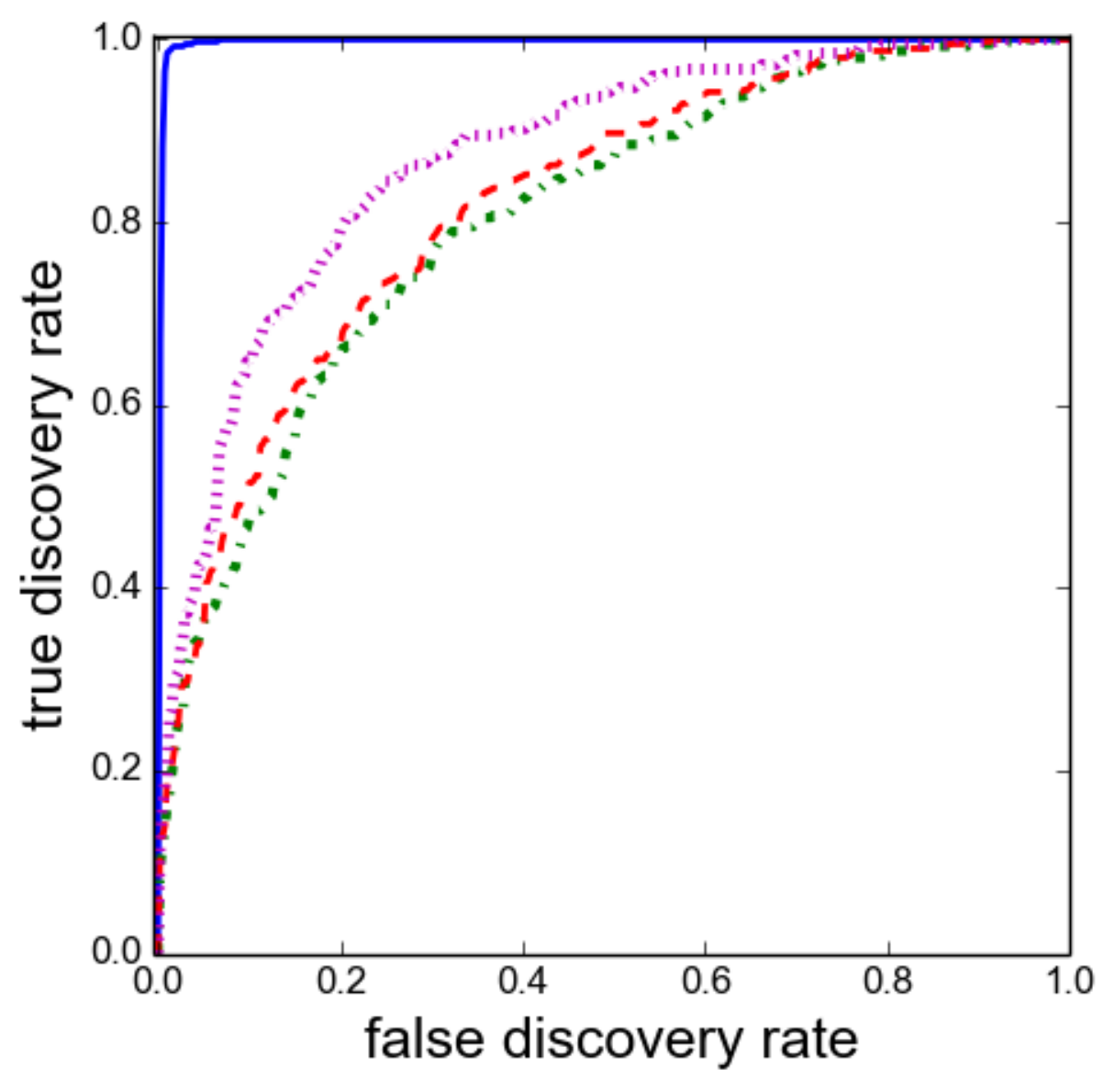}}
\subfigure{\includegraphics[width=2in]{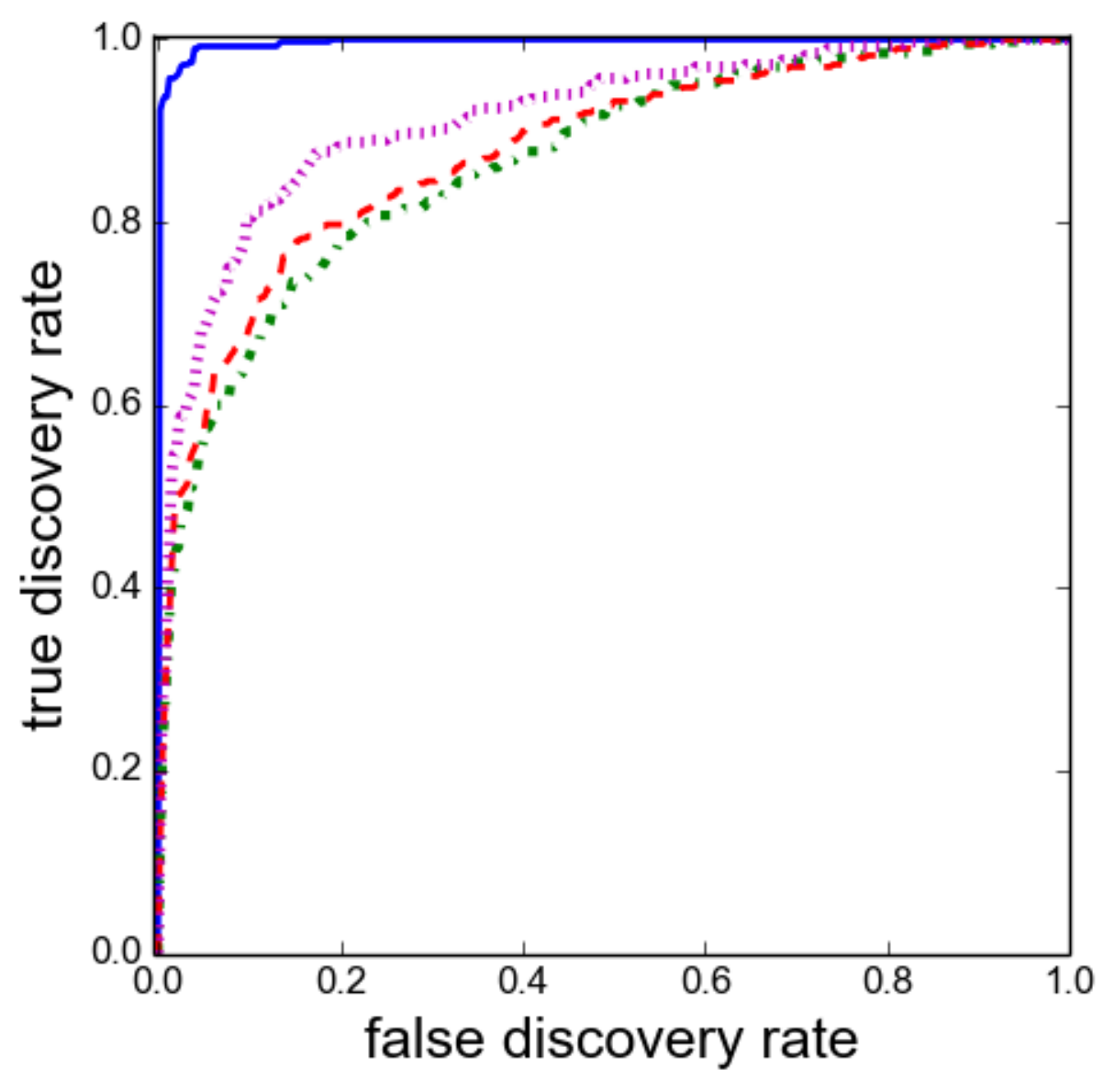}}
\subfigure{\includegraphics[width=2in]{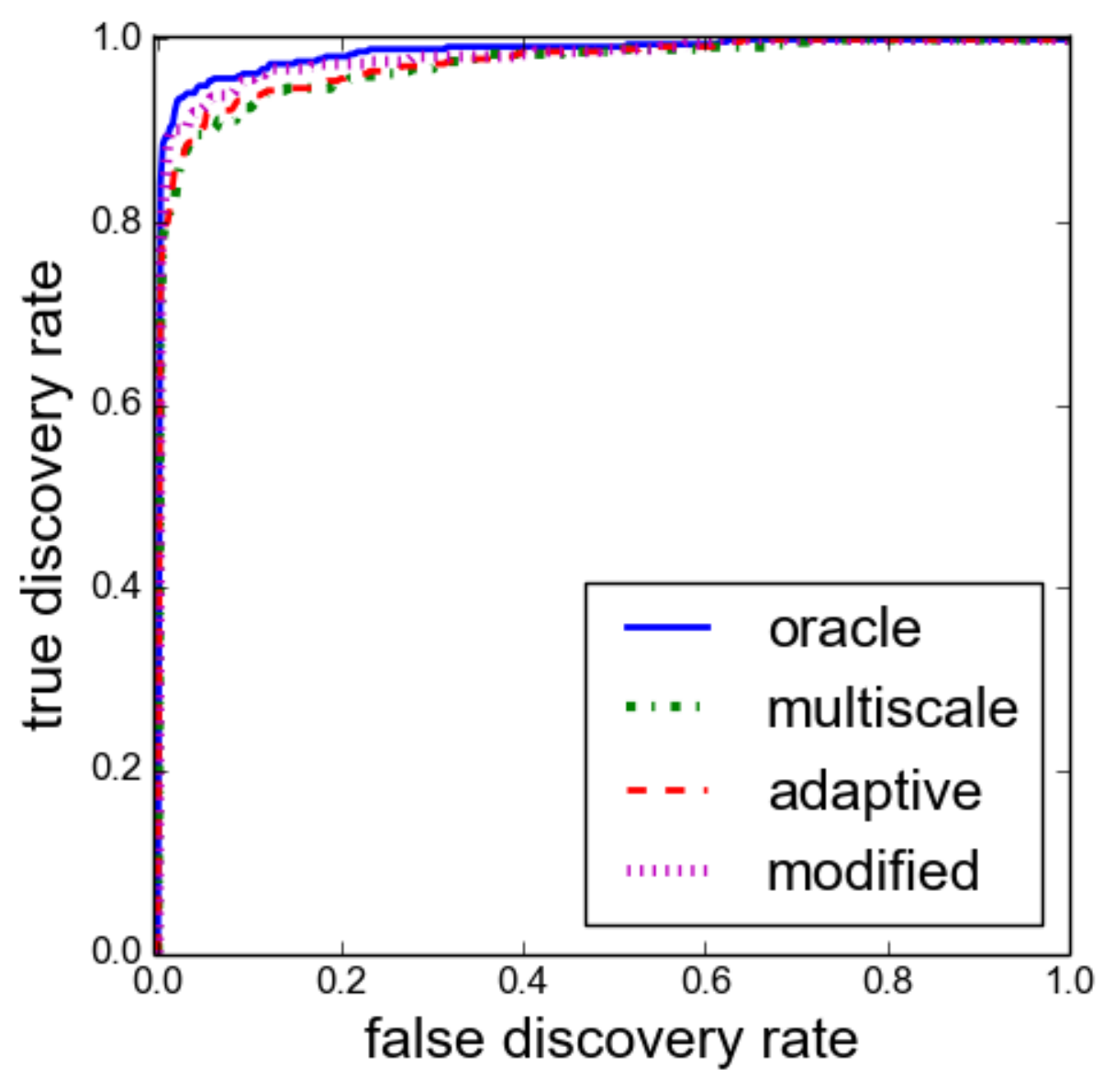}}
}
\caption{{\small (ROC curves for varying rectangle size) The percentage of discoveries that are true versus the percentage that are false, obtained by varying the $\tau$ parameter.  Constructed with $400$ repeats from both $H_0$ and $H_1$, with $n = 256$, $d=2$, $\underline h = 6$, $\mu = 6$, and the rectangle size varying: $34 \times 81$ pixels (left), $34 \times 38$ (middle), $18 \times 15$ (left), with the rectangle location randomized.}
}
\label{fig1}
\end{figure*}

We also provide quantile-quantile plots of the P-value statistics against the uniform distribution on $(0,1)$.
The motivation for this is to assess if the P-values computed based on the thresholds \eqref{u_m} and \eqref{u_a} are accurate.
Our asymptotic theory (Theorems~\ref{thm:oracle_type1}, \ref{thm:multiscale_type1}, and \ref{thm:adaptive_type1}) predicts that this is the case in the large sample limit $n \to \infty$.
We see that the P-values tend to be over-estimated (Figure \ref{fig2}), so that they produce more conservative tests.
In these experiments, we vary the image size to be $128 \times 128$, $256 \times 256$, and $512 \times 512$ --- with $\underline h = 4,6,8$, respectively --- and run $400$ simulations from $H_0$.
In finite samples, we it is clear that our theory provides thresholds that are overly conservative.  
Based on this, In turn we suggest that one uses the adaptive scan P-value as a test statistic and sample from $H_0$ or use a permutation test to from a P-value.

\begin{figure*}[!htbp]
\centering
\mbox{
\subfigure{\includegraphics[width=2in]{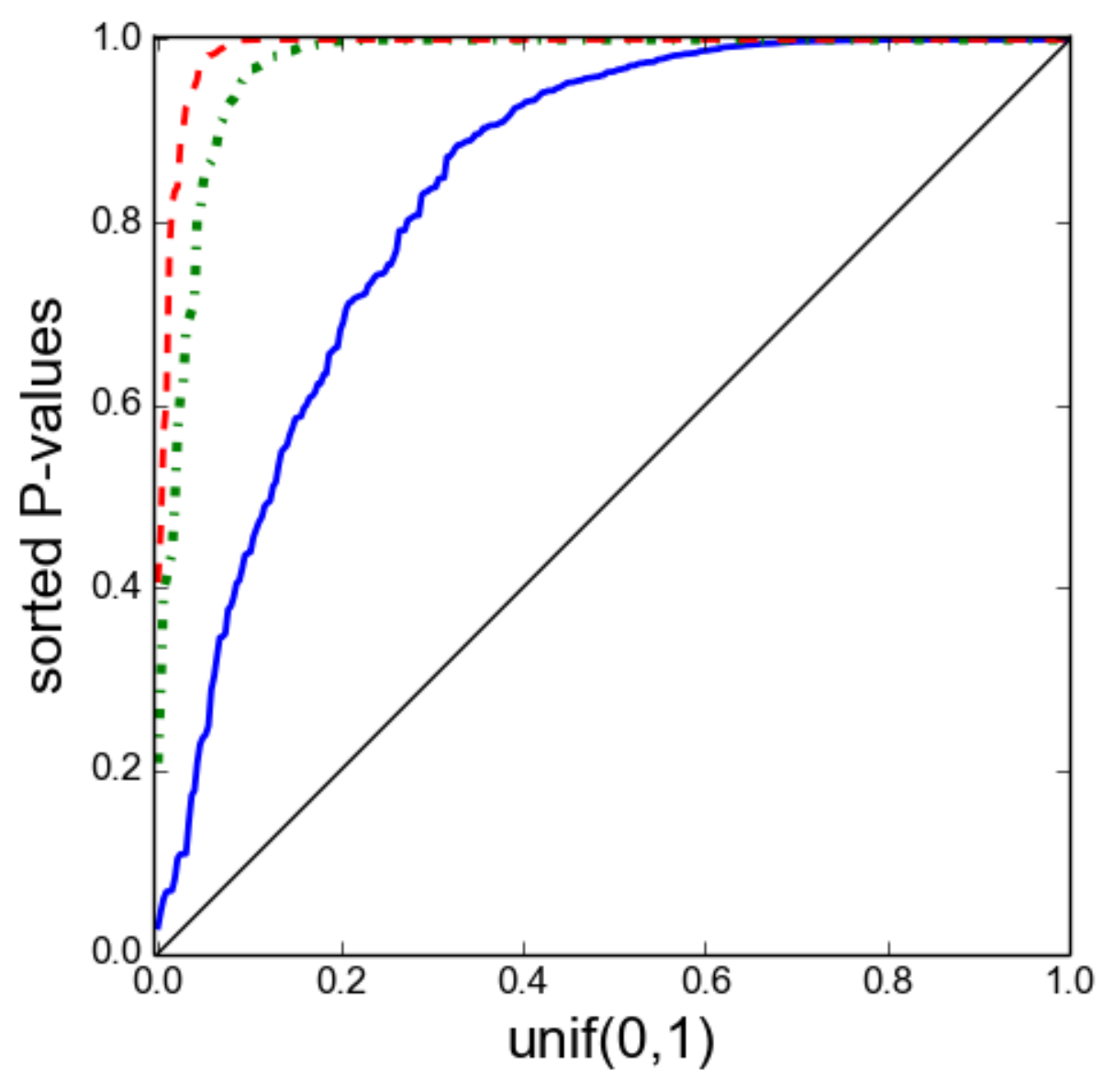}}
\subfigure{\includegraphics[width=2in]{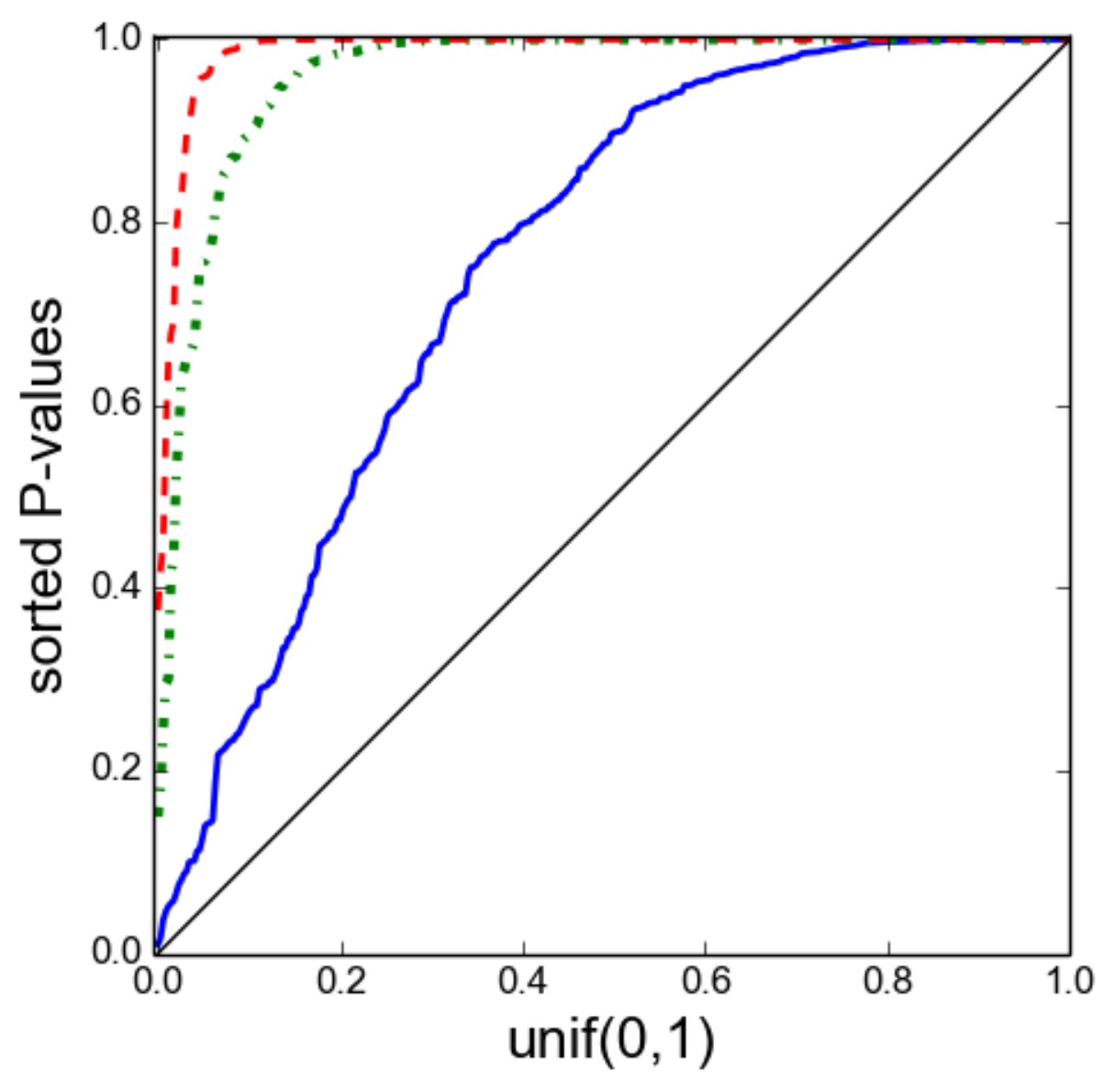}}
\subfigure{\includegraphics[width=2in]{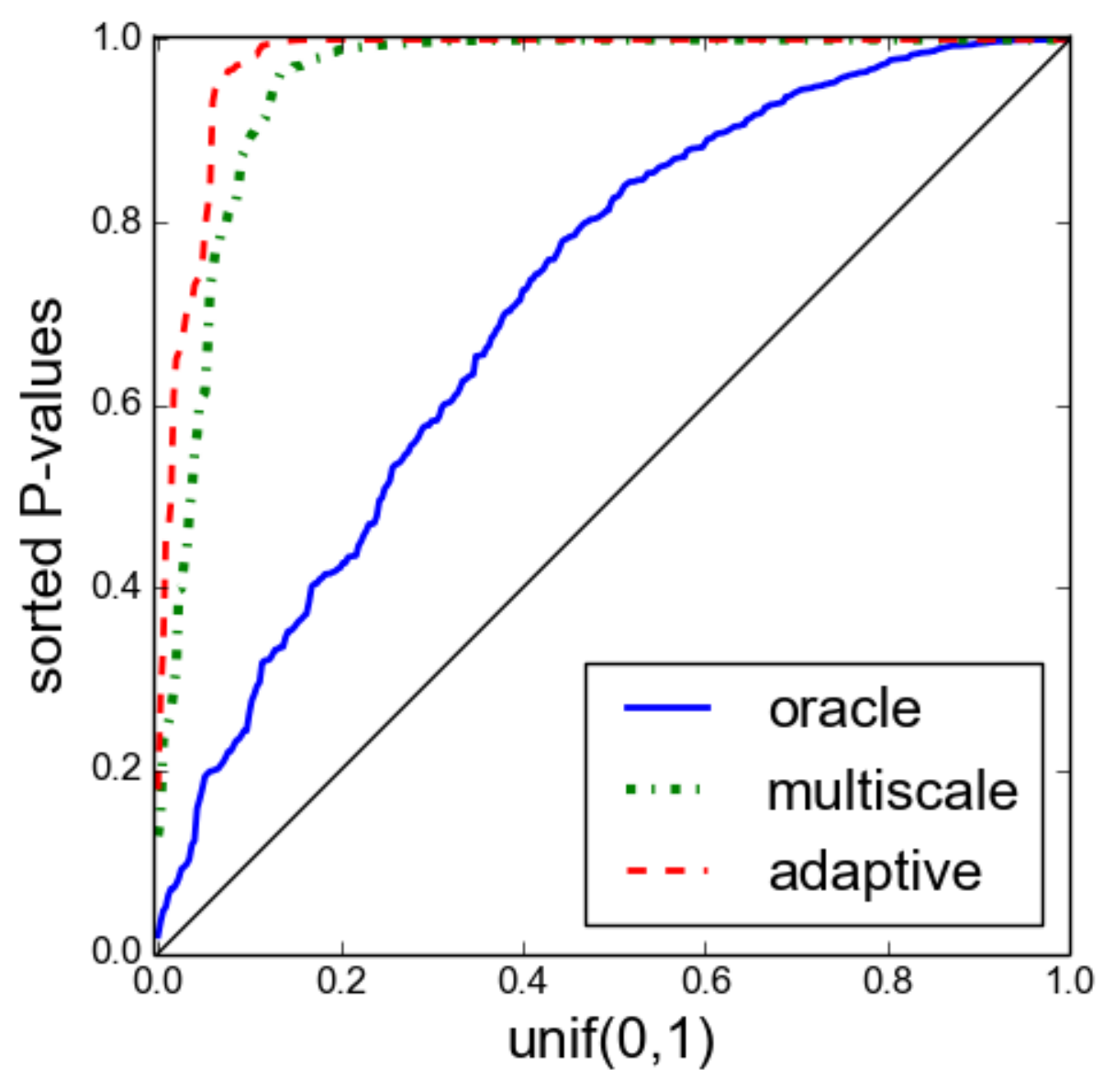}}
}
\caption{{\small (P-value qq-plot) The ordered P-values of $400$ simulations plotted against the quantiles of the uniform$(0,1)$ distribution.  The images size is increasing: $128 \times 128$ (left), $256 \times 256$ (middle), and $512 \times 512$ (right).}
}
\label{fig2}
\end{figure*}

The final set of experiments are intended to demonstrate the performance of \algref{eps_rect}, and highlight the computational and statistical tradeoffs involved.
We derive ROC curves for \algref{eps_rect} by applying it to $480$ simulations with of two different image sizes: $256 \times 256$ and $512 \times 512$ pixels (Figure \ref{fig3}) with parameters $\underline h = 6,12$ and $\mu = 4,5$, and active rectangle of size $61 \times 47$ and $82 \times 35$, respectively.  
We selected the values for $\epsilon$ by making $8d / \epsilon^2$ equal to the integers $1, \ldots, 6$ and selecting from these 4 representative curves.
We interpret the results to mean that as $\epsilon$ decreases, the performance quickly converges to the optimal ROC plot.
We also considered the running time as $\epsilon$ changes (Figure \ref{fig4}).
In this simple experiment we find that, while it is advantageous to have an $\epsilon$ small to increase the power, the improvements in power are generally outweighed by the additional computational burden.

\begin{figure*}[!htbp]
\centering
\mbox{
\subfigure{\includegraphics[width=2in]{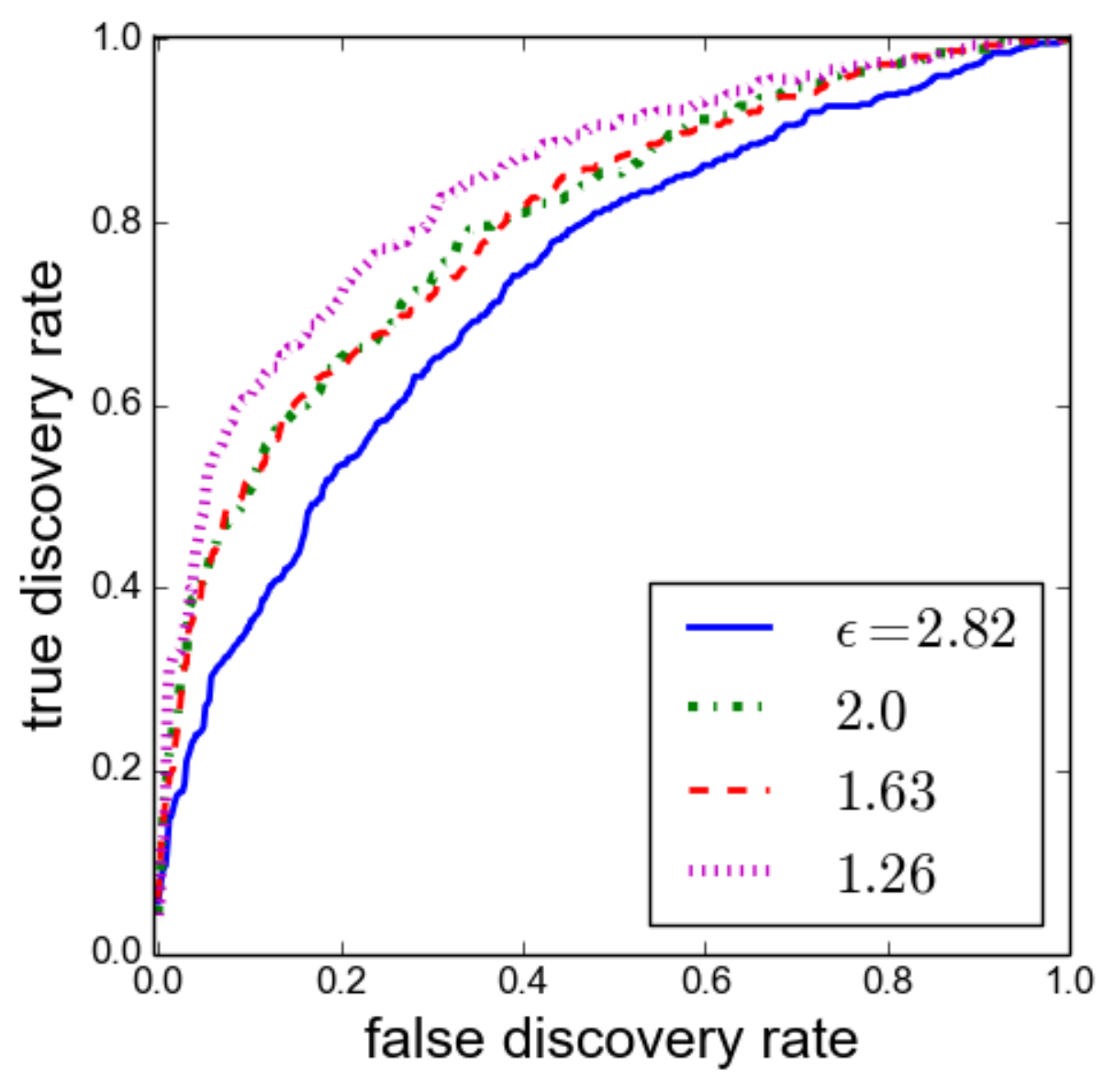}}
\subfigure{\includegraphics[width=2in]{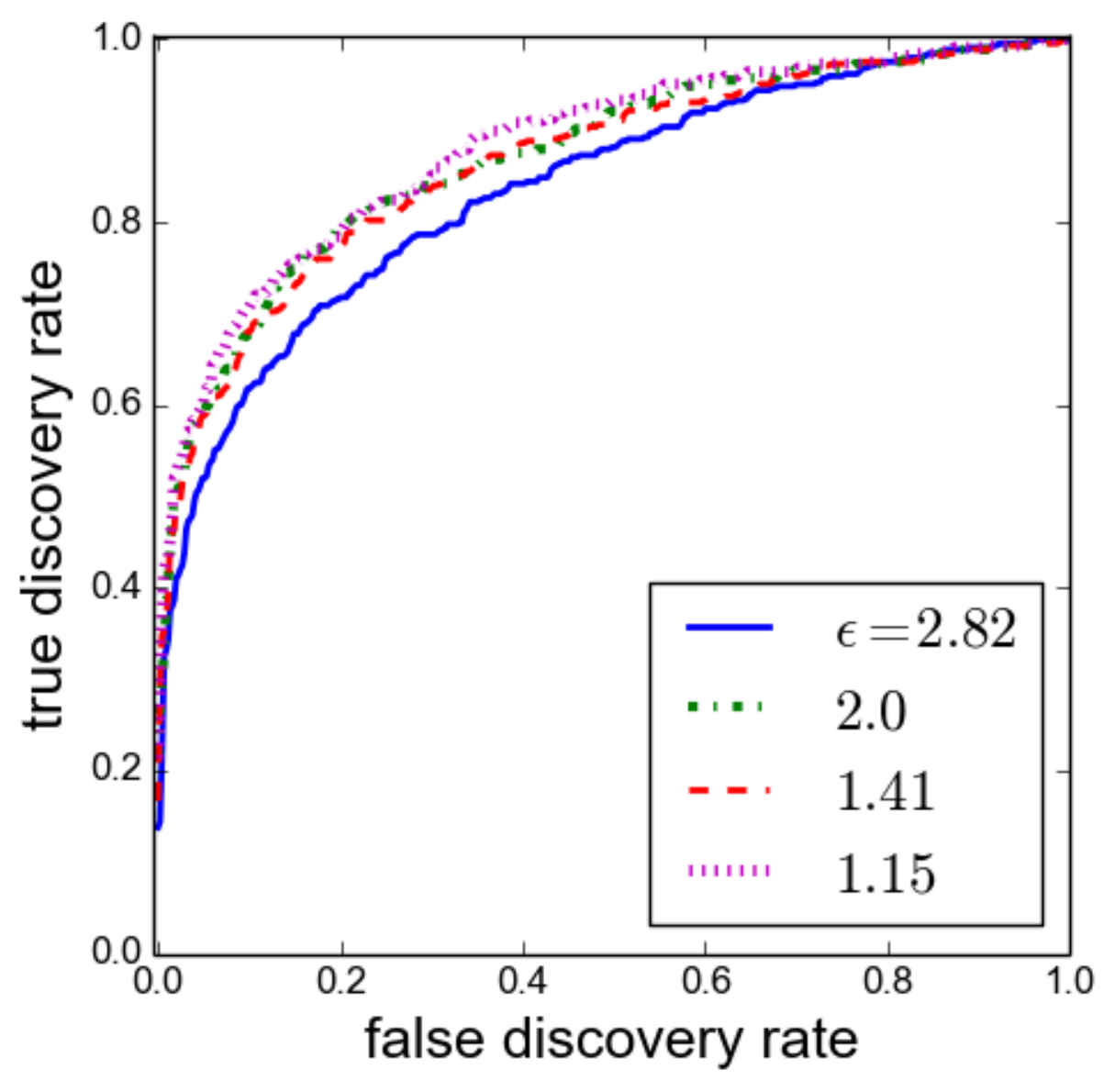}}
}
\caption{\small{(ROC curve for the $\epsilon$-adaptive scan) The ROC curve for \algref{eps_rect} as $\epsilon$ decreases for a $256 \times 256$ image (left) and a $512 \times 512$ image (right) with $\underline h = 6, 12$, $\mu = 4,5$, and a $61 \times 47$ and $82 \times 35$ active rectangle, respectively.  Each setting is repeated $480$ times.}
}
\label{fig3}
\end{figure*}

\begin{figure*}[!htbp]
\centering
\mbox{
\subfigure{\includegraphics[width=2in]{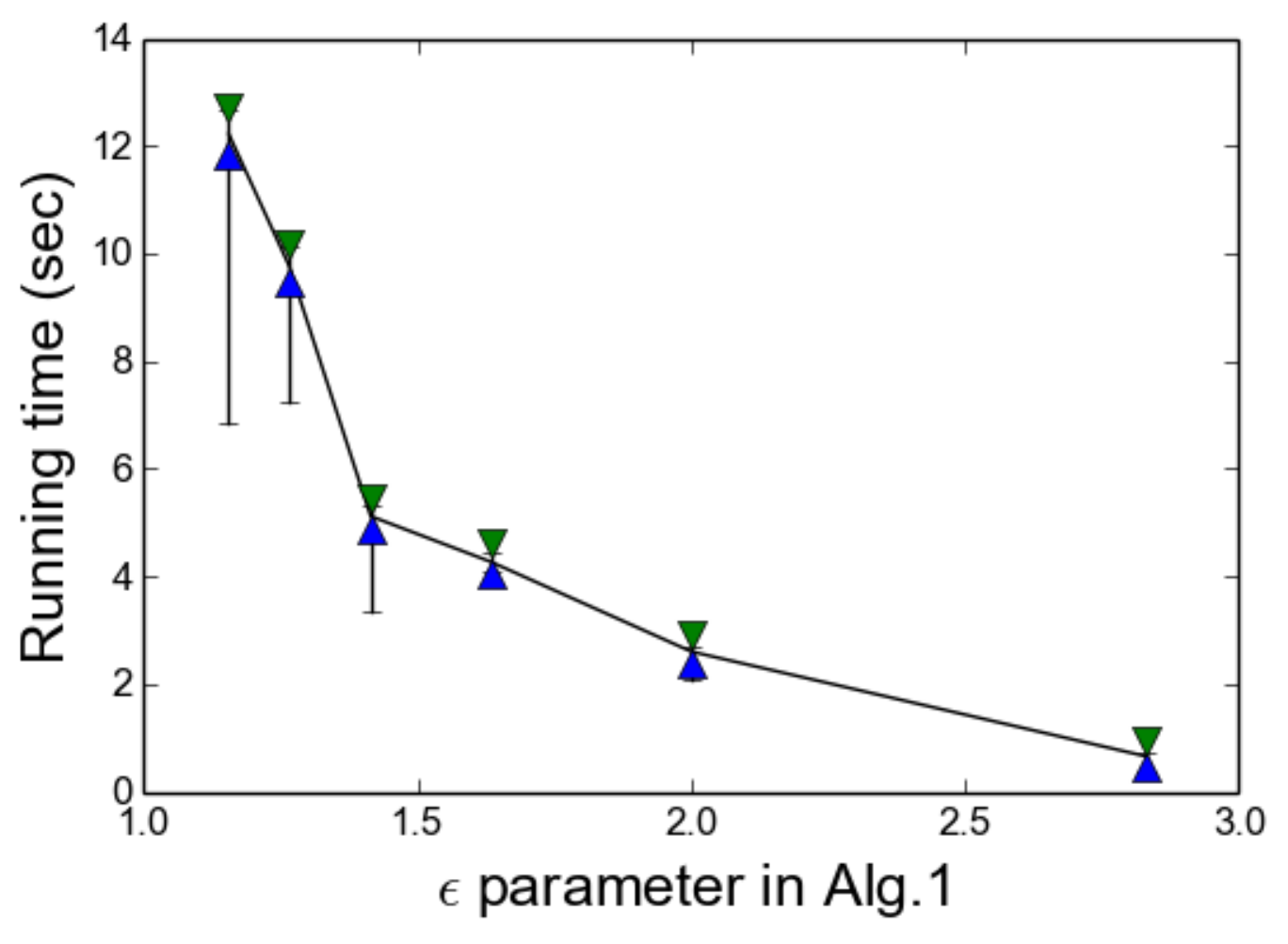}}
\subfigure{\includegraphics[width=2in]{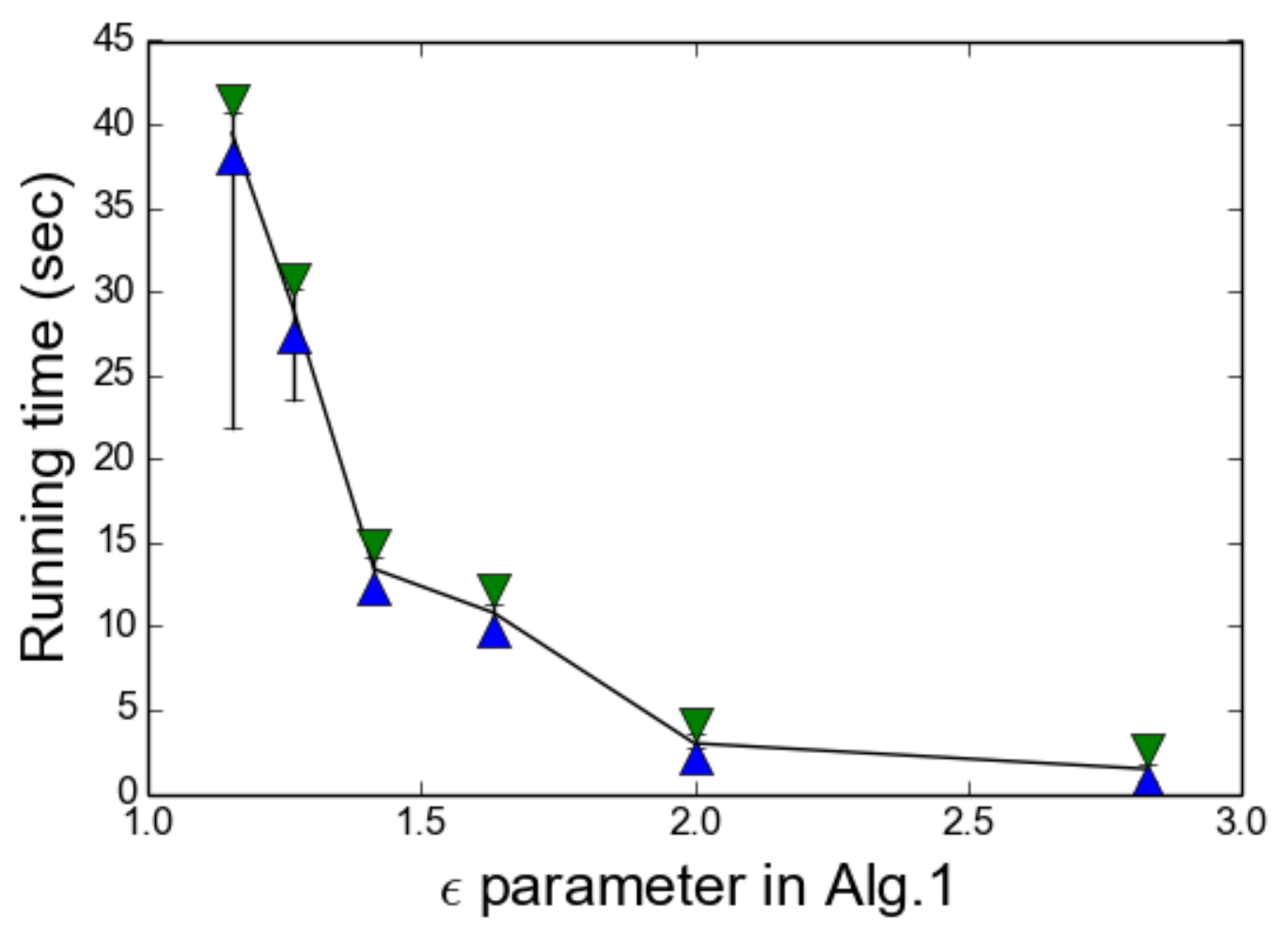}}
}
\caption{\small{(Running time) Same setting as in Figure \ref{fig3}.  Here we plot the running time in seconds on a $2.80$ Ghz virtual CPU as a function of $\epsilon$.  The line indicates the average time, the triangles are the $5$ and $95$ percentiles, and the error bars extend from the minimum to the maximum of the $480$ simulations.}}
\label{fig4}
\end{figure*}

\section{Discussion}
\label{sec:discussion}

We briefly discuss some generalizations and refinements of our work here.

{\em More general signals.}
In this paper we work in a context where the signal has substantial energy over some rectangle of unknown shape and location. 
This motivates the scans over classes of rectangles that we define and study in detail.
Although one could scan over more complicated shapes, to increase power, as done for example in \citep{cluster,MGD,duczmal2006ess,kulldorff2006ess}, the implementation of such scans is generally very complicated and often ad hoc search methods are implemented; also, the asymptotic analyses becomes much more complicated.  
In contrast, scanning over rectangles can be done efficiently and the mathematical analysis can be carried to the exact asymptotics, as we show here --- see also \citep{MGD,MR2604703}.
Moreover, rectangles are building blocks for more complicated shapes and are representative of `thick' or `blob-like' shapes --- see \citep{cluster}.

{\em Signals of arbitrary sign.}
For concreteness and ease of exposition, we consider signals that are implicitly positive over a rectangle.  This can be seen from our definition of Z-score in \eqref{Z}.  This would not be the most appropriate definition when one is expecting signals of arbitrary sign,  for example, when the signal $\xb$ is such that $x(\ib)$ are IID normal with zero mean and variance $\tau$, over some rectangle $R$.  In that case, assuming $R$ is asymptotically large, we have $x[R] \sim \Ncal(0, \tau)$, and is negative with probability a half.
For a sign $\xb = (x(\ib) : \ib \in [n]^d)$ and $R \subset [n]^d$, define $x_2[R] = \sum_{\ib \in R} x(\ib)^2$.
Instead of the class of signals defined in \eqref{alt}, consider
\beq \label{alt-sign}
\Xcal^2_{\tau}(\hb) = \Big\{\xb \in \RR^{n^d} : \min_{R \in \Rcal(\hb)} x_2[R] \ge |R| + \tau \sqrt{2 |R|} \Big\}.
\eeq
In that case, the most natural scans are based on the chi-squared scores $y_2[R], R \in \Rcal$.

{\em Other parametric models.}
Obtaining similar results for other parametric models may be of interest, for example, in epidemiology where the Poisson distribution is used to model counts, and would replace the Gaussian distribution here.  \cite{cluster} extend their first-order analysis to distributions with finite moment generating function, proving that the bounds obtained under normality still apply as long as $\underline h \gg \log n$.  It is possible that a similar phenomenon (essentially due to the Central Limit Theorem) applies at a more refined level, and that our results apply to such distributions, again, as long as $\underline h$ is sufficiently large.

{\em Dependencies.}
A more involved extension of our results would be to allow the observations  $y(\ib), \ib \in [n]^d$ to be dependent.  The results of \cite{chan2006maxima} upon which Kabluchko's arguments (and therefore ours too) are founded apply unchanged to the setting where short-range dependencies are present.  So, in principle, an extension of our work in that direction is possible following similar lines.  But we did not pursue this here for the sake of concreteness and conciseness of presentation.

\section{Proofs}
\label{sec:proofs}

Our method of proof is largely based on \citep{kabluchko2011extremes}, which relies on the work of \cite{chan2006maxima} on the extrema of Gaussian random fields and the Chen-Stein Poisson approximation \citep{arratia1989two}.

{\em Signals that are indicators of rectangles.}
We will focus the remaining of the paper on signals $\xb$ that are proportional to the indicator of a rectangle.  This is asymptotically the most difficult case for all the scans that we consider.  Indeed, we show in this section that the limits in Theorems~\ref{thm:oracle_type2},~\ref{thm:multiscale_type2}, and~\ref{thm:adaptive_type2}, hold when the signal is $\mu |R^\star|^{-1/2} {\bf 1}_{R^\star}$, while for a more signal $\xb$ such that $x[R^\star] \ge \mu$, these are seen to hold as lower bounds when taking the limit inferior.  Together, this shows that the minimax asymptotics stated in Theorems~\ref{thm:oracle_type2},~\ref{thm:multiscale_type2}, and~\ref{thm:adaptive_type2} hold.

We will often leave $n$ implicit, but even then, all the limits are with respect to $n \to\infty$, unless otherwise stated.

\subsection{Locally stationary Gaussian random fields}
\label{sec:local_stationary}
We will begin the proof section with an introduction to some theory for locally-stationary Gaussian random fields (GRFs), particularly their smoothness and extreme value properties.
Throughout this work, we approximate the discrete GRF given by $\{\xi[R], R \in \Rcal\}$ with a continuous version.  
For that, define the continuous analog to $\Rcal$, that is, 
\[
\bar \Rcal = \Big\{ [\tb, \tb+\hb] : \hb \in [\underline h,\overline h]^d, \tb \in [{\bf 0}, n {\bf 1} - \hb] \Big\}.
\]
Let $\Xi$ be a (canonical) Gaussian white noise on $\RR^d$, meaning a random measure on the Borel sets of $\RR^d$ such that, for any integer $k \ge 1$ and any Borel sets $R_1, \dots, R_k$, $\Xi(R_1), \dots, \Xi(R_k)$ are jointly Gaussian, with zero mean and $\Cov(\Xi(R_i), \Xi(R_j)) = \lambda(R_i \cap R_j)$ for all $i,j \in [k]$.
Consider the GRF on $\bar \Rcal$ defined by $\Xi[R] = \Xi(R)/\sqrt{\lambda(R)}$, where $\lambda(R)$ denotes the Lebesgue measure of $R$ when $R$ is a continuous rectangle.
This GRF is denoted $\Xi$ henceforth.  It has zero-mean and covariance structure
\[\Cov (\Xi[R_0], \Xi[R_1]) = \frac{\lambda(R_0 \cap R_1)}{\sqrt{\lambda(R_0)\lambda(R_1)}}, \quad R_0, R_1 \in \bar \Rcal.
\]
Consequently, it is invariant with respect to translations and scalings. 
Following the approach taken by \cite{kabluchko2011extremes}, we approximate the discrete GRF $\xi$ with its continuous counterpart $\Xi$.  
Therefore, we will be interested in the excursion probabilities of $\Xi$, which will require an introduction to locally stationary GRFs.
For convenience, consider the parametrization of the rectangles $\bar \Rcal$ via the one-to-one map $\wb = (\hb,\tb) \mapsto R(\wb) := [\tb, \tb+\hb]$ for $\wb \in (0,\infty)^{2d}$.
We then use the shorthand $\Xi(\wb) = \Xi[R(\wb)]$ for $\wb \in (0,\infty)^{2d}$.
In this way, $\Xi$ can be thought of as a GRF over $(0,\infty)^{2d}$  with the following covariance structure,
\begin{equation}
\label{eq:Xi_covariance}
\Cov(\Xi(\hb,\tb),\Xi(\gb,\sbb)) = \prod_{j=1}^d (h_jg_j)^{-1/2} [(t_j + h_j) \wedge (s_j + g_j) - t_j \vee s_j]_+~,   
\end{equation}
for pairs $(\hb, \tb), (\gb, \sbb) \in (0,\infty)^{2d}$, where $x_+ = x \vee 0$.
Furthermore, define the set of shapes and location that correspond to rectangles in $\bar \Rcal$ as
\[
\Wcal = \Big\{ (\hb, \tb) \in (0,\infty)^{2d} : \hb \in [\underline h,\overline h]^d, \tb \in [{\bf 0}, n {\bf 1} - \hb] \Big\}.
\]
This describes the continuous version of the data under the null distribution $H_0$.
Under the alternative, there is a signal, and the continuous counterpart to the discrete GRF is  
\beq\label{Upsilon}
\Upsilon(\wb) = m(\wb) + \Xi(\wb),
\eeq
where  
\beq \label{m}
m(\wb) = \mu \Cov(\Xi(\wb), \Xi(\wb^\star)),
\eeq
$\wb^\star = (\hb^\star, \tb^\star)$ being the scale and location of the active rectangle.
(Recall that under the alternative we are considering the signal $\mu |R^\star|^{-1/2} {\bf 1}_{R^\star}$.)
We are now prepared to review some relevant results on boundary crossing probabilities of locally-stationary GRFs.

\subsubsection{Boundary crossing probabilities for locally stationary GRFs}

In order to analyze the GRF $\Xi$ we must introduce the notion of local stationarity and the tangent process.
The definitions below are given in \citep{chan2006maxima,qualls1973asymptotic}, and utilized in \citep{kabluchko2011extremes}.

We note that we work in dimension $p = 2d$, except when analyzing the oracle scan, in which case $p=d$, because the shape $\hb^\star$ is given.
Given $K \subset \RR^p$ and $\gamma > 0$, define
\[
[K]_{\gamma} = \{ \wb + \ub : \wb \in K, \| \ub \| \le \gamma \}.
\]
A function $L: \RR_+ \to \RR_+$ is said to be {\em slowly varying} if 
\[
\lim_{x \rightarrow 0} \frac{L(\alpha x)}{L(x)} = 1, \quad \forall \alpha > 0.
\] 
Let $\Scal^{p-1}$ denote the unit sphere in $\RR^p$.
We say that the GRF $\Xi$ is {\em locally stationary over the set $\Wcal$}, if for $\Wcal$ within the domain of $\Xi$, there exists $\alpha \in (0,2]$, $\gamma > 0$, and a slowly varying function $L$, 
such that $[\Wcal]_\gamma \subset (0,\infty)^{2d}$ and for all $\wb \in [\Wcal]_{\gamma}$,
\begin{equation}
  \label{eq:local_stat}
  \EE [\Xi(\wb) \Xi(\wb + \ub)] = 1 - (1 + g_\wb(\ub)) \|\ub\|^\alpha L(\|\ub\|) r_\wb (\ub / \| \ub \|),
\end{equation}
where $r_\wb: \mathbb{S}^{p-1} \to \RR_+$ are continuous functions such that 
\[
\sup_{\vb \in \mathbb{S}^{p-1}} |r_\wb (\vb) - r_{\wb+\ub} (\vb)| \rightarrow 0, \quad \textrm{ as } \ub \rightarrow \zero,
\]
and $g_\wb:\RR^p \to \RR$ are such that 
\[
\sup_{\wb \in [\Wcal]_{\gamma}} |g_\wb(\ub)| \to 0,
\quad \textrm{ as } \ub \rightarrow \zero.
\]
For such processes, the {\em local structure} is defined as
\[
C_\wb (\ub) = \|\ub\|^\alpha L(\|\ub\|) r_\wb (\ub / \| \ub \|),
\]
and we say that the local structure is {\em homogeneous of order} $\alpha$.
Let the {\em tangent process} at $\wb \in \Wcal$ be $\{H_\wb(\ub)\}_{\ub \in \RR^{p}}$, and defined as the GRF satisfying
\[\EE H_\wb(\ub) = -C_\wb(\ub), \quad \ub \in \RR^p,\]
and
\[\Cov (H_\wb(\ub_0), H_\wb(\ub_1)) = C_\wb(\ub_0) + C_\wb(\ub_1) - C_\wb(\ub_0 - \ub_1), \quad \ub_0, \ub_1 \in \RR^p.\] 
The {\em high excursion intensity} is defined as
\[
\Lambda(\wb) = \lim_{m \rightarrow \infty} \frac1{m^p} \EE \exp \Big[ \tsup_{\ub \in [0,m]^p} H_\wb(\ub) \Big]
\]
and has been shown to exist within $(0,\infty)$ in \citep[Lem 5.2]{chan2006maxima}, which in fact states that this convergence is uniform within $\wb \in \Wcal$.
\cite{kabluchko2011extremes} proves the following result by observing that $\Xi$ has the same local structure as a tensor product of normalized differences of Brownian motions.

\begin{lemma}[\cite{kabluchko2011extremes}]
\label{lem:excursion}
The GRF $\Xi$ is locally stationary over $\Wcal$ with $\alpha = 1$ and $L(x) = 1$, with local structure given by
\[
C_{(\hb,\tb)} (\gb,\sbb) = \frac 12 \sum_{j=1}^d \frac{|s_j| + |s_j + g_j|}{h_j},
\]
and high excursion intensity given by 
\[
\Lambda(\hb,\tb) = 4^{-d} \prod_{j = 1}^d h_j^{-2}.
\]
\end{lemma}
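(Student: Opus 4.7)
The plan is to verify both claims directly from the covariance formula \eqref{eq:Xi_covariance}, following the approach of \cite{kabluchko2011extremes}. The key observation is that $\Cov(\Xi(\hb,\tb),\Xi(\gb,\sbb))$ factorizes over coordinates $j\in[d]$, so both the local structure and the tangent process inherit a product/additive decomposition.

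\emph{Local structure and tangent process.} Fix $\wb=(\hb,\tb)\in\Wcal$ and a perturbation $\ub=(\gb,\sbb)$ with $\|\ub\|$ small compared to $\underline h$, and substitute $\wb+\ub$ into \eqref{eq:Xi_covariance}. Using $a\wedge b=\tfrac12(a+b-|a-b|)$ and $a\vee b=\tfrac12(a+b+|a-b|)$, the $j$th numerator simplifies to $h_j+g_j/2-\tfrac12(|s_j|+|s_j+g_j|)$ (with the $[\cdot]_+$ redundant once $\|\ub\|<\underline h$); dividing by $\sqrt{h_j(h_j+g_j)}=h_j+g_j/2+O(g_j^2/h_j)$ and Taylor-expanding the product over $j$ yields
\[
\EE[\Xi(\wb)\Xi(\wb+\ub)] = 1 - \tfrac12\sum_{j=1}^d \frac{|s_j|+|s_j+g_j|}{h_j} + O(\|\ub\|^2/\underline h^2),
\]
which identifies $C_\wb(\ub)$ as claimed, with $\alpha=1$, $L\equiv 1$, $r_\wb$ continuous in $\wb$, and the remainder folding into a $g_\wb\to 0$ correction uniformly on $\Wcal$. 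Additivity of $C_\wb$ over $j$ then forces $H_\wb=\sum_j H^{(j)}$ with independent summands, and comparison against $\Cov(B(u),B(v))=\tfrac12(|u|+|v|-|u-v|)$ identifies
\[
H^{(j)}(g_j,s_j) = h_j^{-1/2}\bigl(B_{1,j}(s_j)+B_{2,j}(s_j+g_j)\bigr) - \frac{|s_j|+|s_j+g_j|}{2h_j},
\]
for independent two-sided standard Brownian motions $B_{1,j},B_{2,j}$.

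\emph{Excursion intensity.} By independence of the $H^{(j)}$ and separability of $[0,m]^{2d}$,
\[
\EE\exp\bigl[\sup_{\ub\in[0,m]^{2d}}H_\wb(\ub)\bigr] = \prod_{j=1}^d \EE\exp\bigl[\sup_{(g,s)\in[0,m]^2}H^{(j)}(g,s)\bigr].
\]
The Brownian scaling $(s,g)=(h_js',h_jg')$ turns the $j$th factor into the supremum of the universal process $\tilde H(g',s')=B_1(s')+B_2(s'+g')-\tfrac12(|s'|+|s'+g'|)$ over $[0,m/h_j]^2$, so $m^{-2}\EE\exp[\sup_{[0,m]^2}H^{(j)}]\to h_j^{-2}\Lambda_0$ with $\Lambda_0:=\lim_{M\to\infty}M^{-2}\EE\exp[\sup_{[0,M]^2}\tilde H]$ a universal constant. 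Multiplying over $j$ yields $\Lambda(\wb)=\Lambda_0^d\prod_j h_j^{-2}$, so it remains to show $\Lambda_0=1/4$. In the coordinates $u=s'$, $v=s'+g'$ we have $\tilde H=X(u)+Y(v)$ with $X(u)=B_1(u)-u/2$ and $Y(v)=B_2(v)-v/2$ independent drifted Brownian motions, on the parallelogram $\Omega_M=\{u\in[0,M],v\in[u,u+M]\}$ of area $M^2$. A hitting-time calculation for drifted BM gives $M^{-1}\EE\exp[\sup_{[0,M]}(W(s)-s/2)]\to 1/2$, and independence of $X$ and $Y$ combined with the fact that each attains its running maximum on a stochastically bounded set near the origin lets the 2D intensity factorize to $\Lambda_0=(1/2)^2=1/4$.

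\emph{Main obstacle.} The bulk of the work is in justifying this last factorization. A naive product upper bound $\EE\exp[X^*_M]\EE\exp[Y^*_{2M}]$ only delivers $\Lambda_0\le 1/2$, so one must sandwich more carefully, showing that the constraint $v\ge u$ in $\Omega_M$ excludes only a subexponentially small contribution to the exponential moment because the maximizers of both $X$ and $Y$ concentrate near the origin. This is exactly the kind of extreme-value bookkeeping for locally-stationary Gaussian random fields developed in \citep{chan2006maxima} and transported to the scan context in \citep{kabluchko2011extremes}; the argument transfers here with only notational adjustments.
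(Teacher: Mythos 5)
The paper itself does not prove this lemma: it is imported from \cite{kabluchko2011extremes}, with only the remark that the result follows ``by observing that $\Xi$ has the same local structure as a tensor product of normalized differences of Brownian motions.'' Your proposal is an attempt at that proof, and its first two stages are sound: the Taylor expansion of \eqref{eq:Xi_covariance} does give $C_{(\hb,\tb)}(\gb,\sbb)=\tfrac12\sum_j(|s_j|+|s_j+g_j|)/h_j$ with $\alpha=1$, $L\equiv1$, and your tangent process $H^{(j)}(g_j,s_j)=h_j^{-1/2}\big(B_{1,j}(s_j)+B_{2,j}(s_j+g_j)\big)-\tfrac{|s_j|+|s_j+g_j|}{2h_j}$ has exactly the required mean and covariance (I checked both against the defining relations), matching the tensor-product structure attributed to Kabluchko. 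The factorization of $\Lambda$ over coordinates, the rescaling to a universal two-parameter constant $\Lambda_0$, and the one-dimensional input $M^{-1}\EE\exp\big[\sup_{[0,M]}(W(s)-s/2)\big]\to 1/2$ (equivalent to $H_1=1$) are all correct as well.

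The genuine gap is the final step $\Lambda_0=1/4$, which you yourself identify as the bulk of the work but propose to close by a mechanism that is wrong. Concentration of the almost-sure maximizers of $X(u)=B_1(u)-u/2$ and $Y(v)=B_2(v)-v/2$ near the origin is irrelevant for these exponential-moment limits: if near-origin behavior governed them, $M^{-1}\EE\exp[\sup_{[0,M]}X]$ would tend to $0$ rather than $1/2$. In fact $\EE\exp[\sup_{[0,M]}X]\sim M/2$ precisely because the expectation is dominated by rare excursions to levels $x$ of order up to $M/2$, whose maximizers sit near $u\approx 2x$ and hence are spread over all of $[0,M]$; for exactly this reason the constraint $v\ge u$ defining the parallelogram cannot be dismissed as a boundary effect near the origin, and your argument as stated delivers neither the upper bound at level $1/4$ (only $1/2$) nor any matching lower bound. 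What actually works for the upper bound is a blocking argument: partition the $u$-range into blocks of length $m$ with $\sqrt M\ll m\ll M$, cover the parallelogram by the product sets $[im,(i+1)m]\times[im,(i+1)m+M]$, and use the exponential-martingale identity $\EE e^{B_1(a)-a/2}=1$ to translate each block to the origin, giving $\EE\exp[\sup]\le (M/m)\cdot(m/2)\cdot((M+m)/2)\sim M^2/4$; the matching lower bound then requires a genuine Bonferroni/double-sum control of the cross terms between blocks (the Chan--Lai/Kabluchko bookkeeping you gesture at), since inscribed product sets alone give only $M^2/16$. Until that is supplied, the proposal establishes the local structure and the tangent process but not the stated value of $\Lambda$.
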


Define the function
\[
\psi(x) = \frac{1}{x \sqrt{2\pi}} e^{-\frac 12 x^2}, \quad x \in \RR.
\]

\begin{lemma}[\cite{chan2006maxima} Th 2.1]
\label{lem:chan_constant}
For $K \subset \Wcal \subset \RR^p$ fixed, bounded, and Jordan measurable, and a GRF $\Xi$ that is locally stationary over $\Wcal$ with homogeneity $\alpha$ and high-excursion intensity $\Lambda$,
\[
\PP \big\{\exists \wb \in K : \Xi(\wb) > c \big\} \sim  c^{\frac{2p}{\alpha}}\psi(c) \int_{K} \Lambda(\wb) {\rm d} \wb, \quad \text{as } c \rightarrow \infty.
\]
\end{lemma}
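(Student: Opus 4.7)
The plan is to follow the Pickands-style ``double-sum'' scheme as executed in \cite{chan2006maxima}, since this lemma is essentially their Theorem 2.1 and the authors will likely just cite it; still, here is the strategy I would use to convince myself. The natural length scale for level-$c$ exceedances of a locally stationary GRF with local structure of order $\alpha$ is $T_c := c^{-2/\alpha}$, since $C_\wb(\ub)$ is of order one precisely when $\|\ub\| \asymp T_c$. So I would first partition $K$ into disjoint cubes of side length $m T_c$ for a large auxiliary parameter $m$, and show that the exceedance probability in each such cube $Q_\wb$ centered at $\wb$ satisfies
\[
\PP\bigl\{\sup_{\vb \in Q_\wb} \Xi(\vb) > c\bigr\} \;\sim\; m^p\, \Lambda(\wb)\, \psi(c), \qquad c \to \infty,
\]
uniformly over $\wb \in K$ and in an appropriate joint limit $c \to \infty$ followed by $m \to \infty$.

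To prove this local asymptotic I would rescale spatial coordinates by $T_c$ and use the local stationarity condition \eqref{eq:local_stat} to couple $\Xi$ restricted to $Q_\wb$ with its tangent process $H_\wb$; the slowly varying $L \equiv 1$ case here is clean, and $g_\wb(\ub) \to 0$ uniformly in $\wb$ gives the coupling error. A standard Gaussian change of measure (conditioning on $\Xi(\wb) = c - s/c$ for $s \ge 0$ and using the Brownian-like fluctuation of the tangent process on the scale $T_c$) reduces the exceedance probability to $\psi(c)$ times an expectation of the form $\EE\exp[\sup_{\ub \in [0,m]^p} H_\wb(\ub)]$, which by definition of the high-excursion intensity converges to $m^p \Lambda(\wb)$ after dividing by $m^p$ and sending $m \to \infty$; uniformity in $\wb \in K$ is the content of \citep[Lem.~5.2]{chan2006maxima}.

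To assemble the global estimate I would take a Bonferroni sandwich over the $\asymp \operatorname{vol}(K) (m T_c)^{-p}$ cubes: the union bound gives the upper half directly, and the lower half requires controlling the double sum
\[
\sum_{Q \neq Q'} \PP\bigl\{\sup_Q \Xi > c,\; \sup_{Q'} \Xi > c\bigr\}
\]
and showing it is $o(\operatorname{vol}(K) c^{2p/\alpha} \psi(c))$. Here one splits the pairs into ``close'' and ``far'' regimes: far pairs have correlation bounded away from $1$ so contribute a negligible $\psi(c)^{2+\eta}$ factor, while close pairs are absorbed by choosing $m$ large and using the standard Pickands double-sum estimate that $\EE\exp[\sup_{[0,2m]^p} H_\wb] - 2\EE\exp[\sup_{[0,m]^p} H_\wb]$ is subadditive enough to kill the overlap. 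Summed over cubes and combined with Jordan measurability of $K$, the leading contribution becomes the Riemann sum $\sum_Q (m T_c)^p \Lambda(\wb_Q) \cdot \psi(c) / (m T_c)^p \cdot m^p = c^{2p/\alpha} \psi(c) \int_K \Lambda(\wb)\,d\wb + o(\cdot)$.

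The main obstacle, as always in this circle of ideas, is the double-sum control in the close regime: one must quantify the non-stationary perturbation $g_\wb$ finely enough that the Pickands-type constants defined via $H_\wb$ really govern the joint exceedance geometry, not just the marginal. Once that uniformity-in-$\wb$ estimate is in hand, the rest of the argument is bookkeeping, and Jordan measurability of $K$ is exactly what allows replacement of the cube sum by the integral $\int_K \Lambda(\wb)\,d\wb$ without loss in the asymptotic constant.
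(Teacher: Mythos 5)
The paper offers no proof of this lemma at all: it is imported, in specialized form, directly from \citet[Th 2.1]{chan2006maxima}, so there is no internal argument to compare yours against. Your sketch is the standard Pickands-type double-sum route, which is in fact how the cited result is established: mesoscopic cubes of side $m\,c^{-2/\alpha}$, a local excursion lemma obtained by rescaling and coupling with the tangent process $H_\wb$ via the local-stationarity condition \eqref{eq:local_stat}, a Bonferroni upper bound, a double-sum estimate for the lower bound split into near and far pairs, and a Riemann-sum assembly over $K$ (Jordan measurability entering exactly where you say), yielding the factor $c^{2p/\alpha}\psi(c)\int_K\Lambda$. So the outline is correct and consistent with the source; the only caveat is that the genuinely hard content --- the uniform-in-$\wb$ double-sum control in the close regime and the uniform convergence $m^{-p}\EE\exp[\sup_{[0,m]^p}H_\wb]\to\Lambda(\wb)$ (\citealp[Lem 5.2]{chan2006maxima}) --- is asserted rather than proved in your proposal, which is precisely the material the authors avoid reproducing by citing Chan and Lai.
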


\citep{chan2006maxima} generalized this theorem for calculating the non-constant boundary crossing probability of a locally stationary GRF, which we will use in our analysis of the adaptive scan.
In fact, \citep[Th 2.8]{chan2006maxima} allows a non-constant boundary, a set that is growing with $n$, and holds for non-Gaussian random fields.
We specialize the theorem to our setting.

\begin{lemma}[\cite{chan2006maxima} Th 2.8]
\label{lem:chan_nonconstant}
Let $\Xi$ be a GRF that is locally stationary over $\Wcal$ with homogeneity $\alpha$ and high-excursion intensity $\Lambda$, and take a fixed bounded and Jordan measurable set $K$ such that $[K]_{\gamma} \subset \Wcal$ for some $\gamma > 0$. 
Let $(c_\zeta : \zeta \in (0, 1))$ be a family of real-valued functions defined on $\Wcal$ satisfying
\beq \label{chan_nonconstant1}
\sup_{\wb \in [K]_{\gamma}} c_\zeta(\wb)^{-2p/\alpha} = o(\zeta), \quad \zeta \to 0,
\eeq
for some $\gamma_0 > 0$ fixed and
\beq \label{chan_nonconstant2}
\sup\Big\{c_\zeta(\wb)^2 - c_\zeta(\wb')^2 : \wb, \wb' \in [K]_{2 \zeta}, \|\wb -\wb'\|_\infty \le \zeta\Big\} = o(1), \quad \zeta \to 0.
\eeq
Then 
\[
\PP \big\{\exists \wb \in K : \Xi(\wb) > c_\zeta(\wb) \big\} \sim \int_K c_\zeta(\wb)^{\frac{2p}{\alpha}}\psi(c_\zeta(\wb)) \Lambda(\wb) {\rm d} \wb, \quad \zeta \to 0.
\]
\end{lemma}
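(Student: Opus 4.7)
The plan is to reduce the non-constant threshold case to the constant threshold result of Lemma~\ref{lem:chan_constant} via a fine discretization of $K$. Partition a neighborhood of $K$ into small cubes $\{B_i\}$ of side comparable to $\zeta$, each centered at some $\wb_i$. Hypothesis (\ref{chan_nonconstant2}) gives $c_\zeta(\wb)^2 - c_\zeta(\wb_i)^2 = o(1)$ uniformly on each cube, and combined with (\ref{chan_nonconstant1}) forcing $c_\zeta(\wb_i) \to \infty$ quantitatively, the Mills-ratio identity $\psi(c+o(1/c)) \sim \psi(c)$ yields $\psi(c_\zeta(\wb)) \sim \psi(c_\zeta(\wb_i))$ and $c_\zeta(\wb)^{2p/\alpha} \sim c_\zeta(\wb_i)^{2p/\alpha}$ uniformly in $\wb \in B_i$. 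Applying Lemma~\ref{lem:chan_constant} on $B_i$ with constant threshold $c_\zeta(\wb_i)$ then gives
\[
\PP\big(\exists \wb \in B_i : \Xi(\wb) > c_\zeta(\wb)\big) \sim c_\zeta(\wb_i)^{2p/\alpha}\psi(c_\zeta(\wb_i)) \int_{B_i} \Lambda(\wb')\,\mathrm{d}\wb',
\]
and summing over $i$ produces a Riemann-sum approximation to $\int_K c_\zeta(\wb)^{2p/\alpha}\psi(c_\zeta(\wb))\Lambda(\wb)\,\mathrm{d}\wb$, using continuity of $\Lambda$.

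The second ingredient is a Chen--Stein / Bonferroni bound establishing that the union-bound overcounting across cubes is negligible, so that the union probability equals the first-moment sum up to a multiplicative $(1+o(1))$. For cubes $B_i, B_j$ that are far apart relative to the local range of dependence, the covariance decay implicit in (\ref{eq:local_stat}) together with a standard two-point Gaussian estimate shows the joint exceedance probability is of smaller order than the product of the marginals. For nearby cubes, the tangent-process description of the high excursion set bounds the cluster size so that each cluster is counted as an $O(1)$ excess and absorbed into the integral $\Lambda(\wb)\,\mathrm{d}\wb$. Condition (\ref{chan_nonconstant1}) is essential here: it quantifies $c_\zeta \to \infty$ fast enough that $\psi(c_\zeta)$ is small enough (in particular $o(\zeta)$ in the appropriate sense) that the Chen--Stein Poisson-approximation error vanishes as $\zeta \to 0$.

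The main obstacle is the uniform second-moment bookkeeping, namely controlling $\sum_{i \neq j}\PP(\text{exceedance in } B_i \text{ and in } B_j)$ uniformly in $\zeta$; this requires splitting the pair sum at an intermediate scale and treating the near regime (via local stationarity and the tangent process) and the far regime (via Gaussian covariance decay) separately, following the Pickands-type rescaling that underlies \cite{chan2006maxima}. A minor technical point is handling the boundary collar $[K]_\gamma \setminus K$, which has volume $O(\gamma)$ and contributes negligibly once $\gamma$ is chosen small relative to $\lambda(K)$; the hypothesis $[K]_\gamma \subset \Wcal$ ensures the locally stationary structure is available on all cubes used in the discretization. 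Assembling the first-moment sum with the second-moment negligibility and passing from the Riemann sum to the integral then yields the claimed equivalence.
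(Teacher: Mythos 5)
The paper does not actually prove this lemma: it is imported wholesale from \citep[Th 2.8]{chan2006maxima}, and the paper's only task is to check that, because $K$ is fixed and $\Xi$ is exactly Gaussian, the extra conditions of that theorem (their (2.14) and the assumptions (B1)--(B5)) are automatic, while (2.16) and (2.18) are exactly \eqref{chan_nonconstant1} and \eqref{chan_nonconstant2}. Your proposal instead tries to rebuild the result from the constant-boundary statement (\lemref{chan_constant}) by discretizing $K$ into cubes of side $\asymp\zeta$, freezing the boundary on each cube, and patching with a Chen--Stein/second-moment argument. That is the right general strategy in spirit (it is how such results are proved in the literature), but as written it has a genuine gap at its central step.

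The gap is the application of \lemref{chan_constant} to the $\zeta$-dependent cubes $B_i$. That lemma is an asymptotic as $c\to\infty$ for a \emph{fixed} Jordan-measurable set; you invoke it on sets that shrink with $\zeta$ without any uniform-in-the-set version, and under the stated hypotheses the per-cube formula can actually fail. Condition \eqref{chan_nonconstant1} only forces $c_\zeta^{-2/\alpha}=o(\zeta^{1/p})$, not $o(\zeta)$, so when $p>1$ and $c_\zeta$ grows as slowly as \eqref{chan_nonconstant1} permits, a cube of side $\zeta$ is far \emph{below} the correlation scale $c_\zeta^{-2/\alpha}$. In that regime $\PP\{\exists \wb\in B_i:\Xi(\wb)>c_\zeta(\wb_i)\}$ is of order $\psi(c_\zeta(\wb_i))$ (a single-point exceedance), whereas your claimed right-hand side $c_\zeta^{2p/\alpha}\psi(c_\zeta)\int_{B_i}\Lambda$ can be of strictly smaller order; moreover neighboring cubes are then almost perfectly correlated, so the sum of per-cube probabilities does not approximate the union probability and no Bonferroni/Chen--Stein correction of the kind you sketch can rescue the bookkeeping at scale $\zeta$. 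The correct argument discretizes at the correlation scale (Pickands-type blocks of side a large multiple of $c_\zeta^{-2/\alpha}$), controls the oscillation of $c_\zeta$ across such blocks by combining \eqref{chan_nonconstant1} and \eqref{chan_nonconstant2}, and needs uniform localization and cluster-size estimates; this is precisely the content of Chan and Lai's Theorem 2.8, which your sketch asserts rather than supplies. Given that, the economical route taken in the paper --- cite the theorem and verify that its hypotheses reduce to \eqref{chan_nonconstant1}--\eqref{chan_nonconstant2} in the fixed-$K$, Gaussian case --- is the appropriate one here.
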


\lemref{chan_nonconstant} differs from the statement in \citep[Th 2.8]{chan2006maxima} which includes additional conditions.
This is due to the fact that we assume that $K$ is fixed and $\Xi$ is Gaussian.
Their conditions (2.16) and (2.18) are precisely \eqref{chan_nonconstant1} and \eqref{chan_nonconstant2}, while the condition (2.14) is trivially true for fixed $K$.
In the proof of \citep[Th 2.1]{chan2006maxima} their conditions (A1)-(A5) were shown to hold for locally stationary GRFs, and as a consequence so do (B1)-(B5) since the process is exactly Gaussian and the domain $K$ is fixed.

\subsection{Approximating $\Xi$ with an $\epsilon$-covering}
In this section we state and prove results on the covering properties of $\Wcal$ and the continuity of $\Xi$.  
The metric $\delta$ over $\Rcal$ introduced in \eqref{delta} translates into the following metric on $\Wcal$ (we overload the notation)
\[
\delta(\wb_0,\wb_1) = \delta(R(\wb_0), R(\wb_1)), \quad \forall \wb_0, \wb_1 \in \Wcal.
\] 
An $\epsilon$-covering of $\Wcal$ is defined analogously.  To be sure, it is a subset $\Wcal_\epsilon \subset \Wcal$ such that, for all $\wb \in \Wcal$, there is $\wb' \in \Wcal_\eps$ such that $\delta(\wb, \wb') \le \eps$.
The $\epsilon$-covering number for the metric space $(\Wcal,\delta)$, denoted $N(\Wcal,\delta,\epsilon)$, is the cardinality of a smallest $\epsilon$-covering of $\Wcal$ for $\delta$, and $\log N(\Wcal,\delta,\epsilon)$ is the $\epsilon$-entropy of $\Wcal$.

\begin{lemma} \label{lem:entropy}
For any $0 < \epsilon < \sqrt{4d}$, 
\[
\log N(\Wcal, \delta , \epsilon) \le d \log \left( \frac{32 d^2 n}{\underline h\epsilon^4} \right).
\]
\end{lemma}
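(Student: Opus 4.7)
The plan is to construct an explicit $\epsilon$-net $\Wcal_\epsilon \subset \Wcal$ and count its elements. I will use geometric spacing in the scale coordinates $\hb$ and scale-relative spacing in the location coordinates $\tb$.

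The first step is to reduce the $d$-dimensional metric to one-dimensional estimates. From the covariance formula \eqref{eq:Xi_covariance}, the overlap factorizes across dimensions, and the elementary inequality $1 - \prod_{j=1}^d (1 - a_j) \le \sum_{j=1}^d a_j$ for $a_j \in [0, 1]$ yields $\delta(\wb, \wb')^2 \le \sum_{j=1}^d 2(1 - \rho_j)$, where $\rho_j \in [0, 1]$ is the ratio of the one-dimensional overlap length to $\sqrt{h_j h_j'}$ in direction $j$. A direct calculation with the overlap formula then shows that if $|h_j' - h_j| \le c h_j$ and $|t_j' - t_j| \le c h_j$ for all $j$ (with $c \in (0, 1/2)$), then $2(1 - \rho_j) = O(c)$, so $\delta(\wb, \wb')^2 = O(dc)$; choosing $c \asymp \epsilon^2/d$ gives $\delta(\wb, \wb') \le \epsilon$.

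Next, I construct the net. For each coordinate $j$, take scales in the geometric progression $h_j^{(k)} = \underline h (1 + c)^k$, $k = 0, 1, \ldots, K$, where $K$ is chosen so that $h_j^{(K)} \ge \overline h$; and for each choice of scales $\kb$, take locations $t_j$ on a grid of spacing $c h_j^{(k_j)}$ within $[0, n - h_j^{(k_j)}]$. Any $\wb \in \Wcal$ is within the required relative distance of such a net point by rounding each $h_j$ down to the nearest scale and then rounding $t_j$ to the nearest multiple of $c h_j^{(k_j)}$. The count factorizes as
\[
|\Wcal_\epsilon| \le \sum_{\kb} \prod_{j=1}^d \frac{2n}{c h_j^{(k_j)}} = \Big(\frac{2n}{c}\Big)^d \prod_{j=1}^d \sum_{k=0}^K \frac{1}{\underline h (1+c)^k} \le \Big(\frac{4n}{c^2 \underline h}\Big)^d,
\]
where the geometric sum is bounded by $(1+c)/(c\underline h) \le 2/(c \underline h)$ for $c \le 1$. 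Substituting $c \asymp \epsilon^2/d$ and taking logarithms yields the claimed form, up to the numerical constant.

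The main subtlety is the use of geometric rather than uniform spacing on $\hb$. A uniform grid with $\Delta_h \asymp c \underline h$ would give $(n/(c\underline h))^d$ scale choices, each paired with on the order of $(n/(c \underline h))^d$ location choices, producing an $(n/(c \underline h))^{2d}$ bound and hence $n^2/\underline h^2$ inside the logarithm. The geometric progression is what makes $\sum_k 1/h^{(k)} = O(1/(c \underline h))$ scale-invariant and eliminates the extra factor of $n/\underline h$; the constraint $\epsilon < \sqrt{4d}$ ensures $c < 1$ so that the geometric-series bound applies. The remaining steps are routine bookkeeping.
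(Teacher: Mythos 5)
Your proposal is correct and takes essentially the same route as the paper: the coordinate-wise reduction via $1-\prod_{j}(1-a_j)\le\sum_j a_j$ to a one-dimensional, scale-invariant relative quantity, followed by a net with geometrically spaced scales and location grids of spacing proportional to the current scale — exactly the paper's partition into cells $I_k\times I_{k,\ell}$ with ratio $\alpha=4d/(4d+\epsilon^2)$, which is likewise what kills the extra factor of $n/\underline h$. The only difference is that you track constants loosely (``$O(c)$'', ``up to the numerical constant''), so you obtain the stated bound with some absolute constant in place of $32$; since the lemma is only consumed inside Dudley-type entropy integrals, this discrepancy is immaterial.
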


\begin{proof}
Let $(\hb,\tb),(\gb,\sbb) \in \Wcal$.
Starting with \eqref{delta} and \eqref{eq:Xi_covariance}, and using the fact that 
\[1 - \prod_{j=1}^d (1 - a_j) \le \sum_{j=1}^d a_j, \quad \text{for any $a_1, \dots, a_d \in [0,1]$},\]
which follows from the union bound or a simple recursion, we have
\begin{eqnarray}
\frac 12 \delta^2 ((\hb,\tb),(\gb,\sbb)) 
&=& 1 - \prod_{j=1}^d (h_jg_j)^{-1/2} [(t_j + h_j) \wedge (s_j + g_j) - t_j \vee s_j]_+ \label{delta-cov} \\ 
&\le& \sum_{j = 1}^d \Big(1 - \frac{1}{\sqrt{h_jg_j}} [(t_j + h_j) \wedge (s_j + g_j) - t_j \vee s_j]_+ \Big)\notag  \\ 
&\le& \sum_{j = 1}^d \Big(1 - \frac{1}{\sqrt{h_jg_j}} [t_j \wedge s_j + h_j \wedge g_j - t_j \vee s_j]_+ \Big)\notag  \\ 
&\le& \sum_{j = 1}^d \frac{1}{\sqrt{h_jg_j}} [\sqrt{h_jg_j} - t_j \wedge s_j - g_j \wedge h_j + t_j \vee s_j]_+ \notag \\ 
&\le& \sum_{j = 1}^d \frac{1}{\sqrt{h_jg_j}} [h_j \vee g_j - t_j \wedge s_j - g_j \wedge h_j + t_j \vee s_j]_+ \notag \\ 
&\le& \sum_{j = 1}^d \theta((h_j, t_j), (g_j, s_j)), 
\quad \text{where } \theta((h, t), (g, s)) = \frac{|h - g| + |t -s|}{\sqrt{hg}}. \label{eq:delta_bound}
\end{eqnarray}
Notice that because
\[
\delta((\hb,\tb),(\gb,\sbb)) \le \sqrt{2 \sum_j \theta((h_j,t_j),(g_j,s_j))},
\]
and $\Wcal \subset [\underline h,n]^d \times [0,n]^d$, it suffices to construct an $(\eps^2/2d)$-covering for each $[\underline h,n] \times [0,n]$ with respect to $\theta$. 
(We define a covering in $\theta$ analogously although it is not necessarily a metric.)
We divide by $\underline h$ everywhere, so that we may focus on $[1,T] \times [0,T]$, where $T = n / \underline h$, by the scale invariance of $\theta$.
Fix $\alpha \in (0,1)$.
We have
\[
[1,T] \times [0,T] \subseteq \bigcup_{k=0}^{[\log T/\log(1/\alpha)]} \bigcup_{\ell=0}^{[1/(\alpha^k (1-\alpha))]} I_k \times I_{k, \ell},
\]
where $I_k = [ \alpha^{k+1} T , \alpha^k T ]$ and $I_{k,\ell} = 
[\ell \alpha^k (1-\alpha) T , (\ell+1) \alpha^k (1-\alpha) T]$.
Take $h,g \in I_k$ and $t,s \in I_{k,\ell}$ for some $k$ and $\ell$ in these ranges.  Then 
\[
\theta((h, t), (g, s)) \le \frac{(1-\alpha) \alpha^k T + (1-\alpha) \alpha^k T}{\alpha^{k+1} T} = \frac{2 (1-\alpha)}{\alpha}.
\]
The cardinality of the resulting covering is equal to 
\[
\sum_{k=0}^{[\log T/\log(1/\alpha)]} ([1/(\alpha^k (1-\alpha))] + 1)
\le \frac{T}{(1-\alpha)^2} + \frac{\log T}{\log (1/\alpha)} \le \frac{2T}{(1-\alpha)^2},
\]
using the fact that $\log(1+x) \ge x^2 \log(2)$ for all $x \in [0,1]$ and $t \ge \log(t)/\log(2)$ for all $t \ge 1$.  
When we choose $\alpha = 4d/(4d + \epsilon^2)$, the tensor product of these coverings, repeated over $j=1,\dots,d$, is an $\epsilon$-covering of $\Wcal$, of cardinality
\[
\Big(\frac{2T}{(1-\alpha)^2}\Big)^d = (2 T (4d/\epsilon^2)^2)^d,
\]
since $1-\alpha \le \epsilon^2/(4d)$.
\end{proof}

We use this bound on the entropy and a continuity property of $\Xi$ to bound $\Xi$.
This bound will be crude relative to the asymptotic guarantees which are the focus of this work, but is necessary as a lemma.
For an $\epsilon$-covering, $\Wcal_\epsilon \subset \Wcal$, define the {\em interpolated GRF} $\Xi_\epsilon$ over $\Wcal$, with value at $\wb \in \Wcal$ given by $\Xi_\epsilon(\wb) = \Xi(\wb')$, where $\wb' = \argmin\{ \delta(\wb_0,\wb) : \wb_0 \in \Wcal_\epsilon \}$; if the minimizer is not unique, then choose a minimizer arbitrarily.
For a real-valued function $f$ over $\Wcal$, let $\|f\|_\infty = \sup_{\wb \in \Wcal} |f(\wb)|$.

\begin{lemma}
\label{lem:xi_bound}
Consider the GRF $\Xi$ introduced in \secref{local_stationary}.  
In our context, it has the following properties.
\begin{enumerate}
\item The supremum of $\Xi$ has the following behavior
\[
\|\Xi\|_\infty = O_\PP\left( \sqrt{ \log(n/\underline h)} \right).
\]
\item Let $\Ucal \subset \Wcal$ be such that there exists a constant $C > 0$ with the property that $\max_j |t_j - s_j| \le C$ and $\max_j |\log h_j - \log g_j| \le C$ for all $(\hb,\tb), (\gb,\sbb) \in \Ucal$.
Then 
\[
\sup_{\wb \in \Ucal} \Xi(\wb) = O_\PP ( 1 ).
\]
\item Let $\Xi_\epsilon$ be an interpolated GRF built on an $\epsilon$-covering of $\Wcal$ where $\epsilon < 1$.  Then
\[
\left\|\Xi - \Xi_\epsilon\right\|_\infty = O_\PP \left(\epsilon \sqrt{ \log (n/(\underline h \epsilon^4))}\right).
\]
\end{enumerate}
\end{lemma}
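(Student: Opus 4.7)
All three bounds will follow from Dudley's entropy inequality applied to the centered Gaussian field $\Xi$ on $(\Wcal,\delta)$, combined with the Borell--TIS concentration inequality for suprema of Gaussian processes. The covering numbers are provided by Lemma~\ref{lem:entropy}, and the key identity throughout is that $\delta$ defined in \eqref{delta} is precisely the canonical (intrinsic) pseudo-metric of $\Xi$, so $\Var(\Xi(\wb_0)-\Xi(\wb_1)) = \delta^2(\wb_0,\wb_1)$ and all sub-Gaussian increments are controlled by $\delta$.

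\textbf{Part 1.} Since $\Xi$ has unit variance, $\|\Xi\|_\infty = \sup_\Wcal \Xi \vee \sup_\Wcal(-\Xi)$, and by symmetry both suprema have the same distribution as $\Xi(\wb_0) + \sup_\Wcal(\Xi(\wb)-\Xi(\wb_0))$ for a fixed $\wb_0\in\Wcal$. Dudley's inequality gives
\[
\EE \sup_{\wb\in\Wcal}(\Xi(\wb)-\Xi(\wb_0)) \le K \int_0^{\sqrt{2}}\!\!\sqrt{\log N(\Wcal,\delta,u)}\,du,
\]
which, by Lemma~\ref{lem:entropy} and the inequality $\sqrt{a+b}\le\sqrt a+\sqrt b$, is at most $K'(\sqrt{d\log(n/\underline h)} + \sqrt{d}\int_0^{\sqrt2}\sqrt{\log(1/u)}\,du)=O(\sqrt{\log(n/\underline h)})$, since the remaining integral is a finite constant. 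Borell--TIS with variance $\sigma^2=1$ controls the deviation from the mean by $O_\PP(1)$, yielding Part 1.

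\textbf{Part 2.} For $\wb_1,\wb_2\in\Ucal$, the bound \eqref{eq:delta_bound} gives $\delta^2(\wb_1,\wb_2)\le 2\sum_j \theta((h_{1j},t_{1j}),(h_{2j},t_{2j}))$. When $|\log h_{1j}-\log h_{2j}|\le C$ one has $|h_{1j}-h_{2j}|/\sqrt{h_{1j}h_{2j}}\le e^C-1$, and $|t_{1j}-t_{2j}|/\sqrt{h_{1j}h_{2j}}\le C/\underline h\le C$. Hence $\Ucal$ has $\delta$-diameter $D=O(1)$ and, by discretizing its bounded box in the $(\log\hb,\tb)$ coordinates at scale $\sim\epsilon^2$, its entropy satisfies $\log N(\Ucal,\delta,\epsilon)\le 2d\log(C'/\epsilon)$. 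Repeating the Dudley + Borell--TIS argument on $\Ucal$ in place of $\Wcal$ gives $\sup_{\Ucal}\Xi = \Xi(\wb_0)+O_\PP(1)=O_\PP(1)$.

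\textbf{Part 3.} This is the main point, and the chaining is a little more delicate. Writing $\pi:\Wcal\to\Wcal_\epsilon$ for the projection onto a closest point in $\delta$, we have $\delta(\wb,\pi(\wb))\le\epsilon$ for every $\wb$, so
\[
\sup_{\wb\in\Wcal}\bigl(\Xi(\wb)-\Xi_\epsilon(\wb)\bigr)
\le \sup_{\substack{\wb,\wb'\in\Wcal\\ \delta(\wb,\wb')\le\epsilon}}\bigl(\Xi(\wb)-\Xi(\wb')\bigr).
\]
The standard chaining bound for the local modulus of a Gaussian process (Dudley, restricted to pairs at distance $\le\epsilon$) controls the expectation of the right-hand side by $K\int_0^\epsilon\sqrt{\log N(\Wcal,\delta,u)}\,du$. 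Plugging in Lemma~\ref{lem:entropy} and splitting via $\sqrt{a+b}\le\sqrt a+\sqrt b$ yields
\[
\int_0^\epsilon\!\!\sqrt{d\log(32d^2n/\underline h)+4d\log(1/u)}\,du \;\le\; \epsilon\sqrt{d\log(n/\underline h)} + 2\sqrt d\!\int_0^\epsilon\!\sqrt{\log(1/u)}\,du,
\]
and an integration by parts (or Laplace estimate) shows the tail integral is $O(\epsilon\sqrt{\log(1/\epsilon)})$. Since $\log(n/(\underline h\epsilon^4))=\log(n/\underline h)+4\log(1/\epsilon)$, the two pieces combine to $O\bigl(\epsilon\sqrt{\log(n/(\underline h\epsilon^4))}\bigr)$. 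Finally, Borell--TIS applied to the Gaussian process $\wb\mapsto\Xi(\wb)-\Xi_\epsilon(\wb)$, whose pointwise variance is $\delta(\wb,\pi(\wb))^2\le\epsilon^2$, gives a deviation of $O_\PP(\epsilon)$ from the mean, which is absorbed into the bound. Symmetry in sign then upgrades $\sup$ to $\|\cdot\|_\infty$.

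\textbf{Anticipated difficulty.} The calculation is mostly routine, but the one place requiring care is Part 3: one must invoke a localized form of Dudley's bound (valid for the modulus of continuity rather than the global supremum), and make sure that the concentration step uses $\sigma^2\le\epsilon^2$ rather than the global variance of $\Xi$, since otherwise the fluctuation term would dominate and the desired $\epsilon$-factor in front of $\sqrt{\log(n/(\underline h\epsilon^4))}$ would be lost.
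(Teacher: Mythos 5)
Your proposal is correct and follows essentially the same route as the paper: all three parts are Dudley entropy-integral bounds driven by \lemref{entropy} (with the localized version of the entropy integral for the modulus of continuity in Part 3), using that $\delta$ is the canonical metric of $\Xi$. The only cosmetic difference is that you finish with Borell--TIS concentration, whereas the paper simply applies Markov's inequality to the expected supremum, which suffices for an $O_\PP$ statement.
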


\begin{proof}
We prove each part in turn.

{\em Part 1.} 
Let $T = n / \underline h$.
Note that $\delta (\wb,\wb') \in [0,\sqrt 2]$.  Dudley's metric entropy theorem \citep[Th 6.1.2]{marcus2006markov}, along with \lemref{entropy}, can be applied to show that

\[
\EE (\|\Xi\|_\infty) 
\le 16 \sqrt 2  \int_0^{\sqrt{2}} \sqrt{ \log N(\Wcal, \delta , \epsilon) } {\rm d}\epsilon 
\le 16 \sqrt 2 \int_0^{\sqrt 2} \sqrt{ d \log \left(32 d^2 T/\epsilon^4\right) } {\rm d} \epsilon 
= O \left(\sqrt{ \log T } \right).
\]
The result now follows by Markov's inequality.

{\em Part 2.} 
This can be proven by noticing that for any $\epsilon$, the entropy of $\Ucal$ satisfies
\[
\log N(\Ucal,\delta ,\epsilon) \le C_0 \log (1/\epsilon)
\]
for some constant $C_0$, using a construction analogous to that used in the proof of \lemref{entropy}.
The rest follows as in the proof of Part 1.

{\em Part 3.} 
As before let $T = n / \underline h$.
By the definition of $\Xi_\epsilon$,
\[
\EE \left( \left\|\Xi_\epsilon - \Xi \right\|_\infty \right) \le \EE \Big( \sup \{ \Xi(\wb_1) - \Xi(\wb_2) : \wb_1, \wb_2 \in \Wcal : \delta(\wb_1,\wb_2) \le \epsilon\} \Big).
\]
Applying Dudley's theorem and \lemref{entropy}, we bound the RHS by
\[
99 \int_0^\epsilon \sqrt{ \log N(\Wcal, \delta , \eta) } {\rm d}\eta 
\le 99 \epsilon \int_0^1 \sqrt{ d \log \left(32 d^2 T/\epsilon^4\right) + 4d \log \left(1/\eta\right)} {\rm d} \eta 
= O \left( \epsilon \sqrt{ \log \left(T/\epsilon^4\right) } \right).
\]
The result follows by Markov's inequality.
\end{proof}

We will now analyze the P-values resulting from our various scan statistics by their Lipschitz property.
This will allow us to demonstrate that if $\epsilon$ is decreasing quickly enough, the P-value of each test when evaluated over an $\epsilon$-covering is asymptotically indistinguishable from the P-value when evaluated over the entire set $\Wcal$.
In the end, we will have proven \thmref{epsilon}, but these results will also be useful to prove other results.  
For convenience, we work with $\tau$ instead of $\alpha$, related by \eqref{tau}.   For each scan statistic, let $\hat \tau$ be the value of $\tau$ such that the scan statistic equals its threshold (\eqref{u_o}, \eqref{u_m}, or \eqref{u_a}).  It takes the form
\begin{eqnarray}
\hat \tau 
&=& \max_{\wb \in \Wcal'} a(\wb) \left( y[R(\wb)] - a(\wb) \right) + b(\wb) \notag \\
&=& \max_{\wb \in \Wcal'} a(\wb) \left( \xi[R(\wb)] + m(\wb) - a(\wb) \right) + b(\wb), \label{eq:hat_tau}
\end{eqnarray}
where $m$ is defined in \eqref{m} (with $m \equiv 0$ under $H_0$), while $a$, $b$ and $\Wcal' \subset \ZZ^{2d} \cap \Wcal$ will depend on which scan statistic we are considering.
In all cases, 
\[\sqrt{2d} \le \sqrt{2d \log(n/\overline h)} \le a(\wb) \le \sqrt{2d \log(n/\underline h)}, \quad \forall \wb \in \Wcal'.\]
We will relate the statistic $\hat \tau$ with the random variable
\begin{equation}
\label{eq:tilde_tau}
\tilde \tau = \max_{\wb \in \Wcal} a(\wb) \left( \Xi(\wb) + m(\wb) - a(\wb) \right) + b(\wb).
\end{equation}

\begin{lemma}
\label{lem:approx_tau}
Suppose there are constants $L > 0$ and $\epsilon_0 > 0$ such that
\beq \label{ab}
\big|a(\wb_1) -a(\wb_2)\big| \vee \big|b(\wb_1) -b(\wb_2)\big| \le L \delta (\wb_1,\wb_2), \quad \forall \wb_1, \wb_2 \in \Wcal : \delta (\wb_1, \wb_2) \le \eps_0.
\eeq
Then $|\hat \tau - \tilde \tau| = o_\PP(1)$ if $\Wcal'$ is an $\epsilon$-covering of $\Wcal$ with 
\beq \label{ab_eps}
\epsilon \big( \mu + \sqrt{\log \left(n/(\underline h \epsilon^4)\right)}\big) \sqrt{\log(n/\underline h)} = o(1).
\eeq
\end{lemma}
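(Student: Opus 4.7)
\emph{Plan.} I would realize both $\hat\tau$ and $\tilde\tau$ as maxima of a common functional
\[
F(\wb) := a(\wb)\bigl(\Xi(\wb) + m(\wb) - a(\wb)\bigr) + b(\wb),
\]
so that $\tilde\tau = \max_{\wb \in \Wcal} F(\wb)$ by \eqref{eq:tilde_tau}. Using the canonical coupling between the discrete lattice noise $\xi$ and the continuous white noise $\Xi$ discussed in Section~\ref{sec:local_stationary}---under which $\xi[R(\wb)]$ equals $\Xi(\wb)$ at integer rectangles, and the discrete signal $x[R(\wb)]$ agrees with $m(\wb)$ up to a lattice-boundary discrepancy that is negligible given $\underline h / \log n \to \infty$---the definition \eqref{eq:hat_tau} of $\hat\tau$ can be rewritten as $\hat\tau = \max_{\wb \in \Wcal'} F(\wb) + o_\PP(1)$. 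Since $\Wcal' \subset \Wcal$ is an $\eps$-covering, the lemma then reduces to the uniform oscillation bound
\[
\sup_{\wb \in \Wcal} \bigl|F(\wb) - F(\wb')\bigr| = o_\PP(1), \qquad \wb' = \wb'(\wb) \in \Wcal' \text{ with } \delta(\wb,\wb') \le \eps.
\]

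Write $\Delta X := X(\wb) - X(\wb')$ for $X \in \{a,b,m,\Xi\}$ and expand
\[
F(\wb) - F(\wb') = a(\wb)\bigl(\Delta\Xi + \Delta m - \Delta a\bigr) + \Delta a \cdot \bigl(\Xi(\wb') + m(\wb') - a(\wb')\bigr) + \Delta b.
\]
I would then bound each increment separately. First, $|\Delta a| \vee |\Delta b| \le L\eps$ by hypothesis \eqref{ab}, valid since $\eps \to 0$ by \eqref{ab_eps} so $\eps \le \eps_0$ eventually. Second, $|\Delta m| \le \mu\,\delta(\wb,\wb') \le \mu\eps$: indeed $\Delta m = \mu\,\Cov(\Xi(\wb) - \Xi(\wb'),\Xi(\wb^\star))$, which by Cauchy--Schwarz is at most $\mu \sqrt{\Var(\Xi(\wb) - \Xi(\wb'))} = \mu\,\delta(\wb,\wb')$ using \eqref{delta}. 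Third, by Lemma~\ref{lem:xi_bound}(3) applied to $\Wcal'$, uniformly in $\wb$,
\[
|\Delta\Xi| \le \|\Xi - \Xi_\eps\|_\infty = O_\PP\bigl(\eps \sqrt{\log(n/(\underline h \eps^4))}\bigr).
\]
Fourth, $\|\Xi\|_\infty = O_\PP(\sqrt{\log(n/\underline h)})$ by Lemma~\ref{lem:xi_bound}(1), and uniformly $a(\wb) \le \sqrt{2d \log(n/\underline h)}$ and $|m(\wb)| \le \mu$.

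Combining, the dominant contributions to $|F(\wb) - F(\wb')|$ are $a(\wb)|\Delta\Xi|$ and $a(\wb)|\Delta m|$, which together are of order
\[
\eps \sqrt{\log(n/\underline h)}\,\bigl(\mu + \sqrt{\log(n/(\underline h \eps^4))}\bigr) = o(1)
\]
by hypothesis \eqref{ab_eps}. The remaining terms all carry an additional factor $|\Delta a| \le L\eps$ against quantities of size $O_\PP(\sqrt{\log(n/\underline h)}) + O(\mu)$, and are therefore of equal or strictly smaller order. Since $|\max_\Wcal F - \max_{\Wcal'} F| \le \sup_\wb |F(\wb) - F(\wb')|$, this yields $|\tilde\tau - \max_{\wb \in \Wcal'} F(\wb)| = o_\PP(1)$, and then $|\hat\tau - \tilde\tau| = o_\PP(1)$ by the initial reduction. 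The main obstacle is that first reduction: cleanly justifying the discretization gap between the lattice-based $\hat\tau$ (built from $\xi$ and $x$) and the continuous surrogate $F$ (built from $\Xi$ and $m$). This relies on the coupling of Section~\ref{sec:local_stationary} together with $\underline h \gg \log n$, which ensures that boundary mismatches between a discrete rectangle and its continuous counterpart contribute only lower-order terms at the extremal scale $\sqrt{\log n}$.
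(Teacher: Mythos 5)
Your proof is correct and takes essentially the same route as the paper: the same increment decomposition of $a(\wb)\big(\Xi(\wb)+m(\wb)-a(\wb)\big)+b(\wb)$, with the Lipschitz hypothesis \eqref{ab}, a Cauchy--Schwarz bound on the drift increment, and Lemma~\ref{lem:xi_bound} (parts 1 and 3) controlling the pieces, and condition \eqref{ab_eps} delivering the $o_\PP(1)$ conclusion. The discretization step you flag as the main obstacle is not an additional gap relative to the paper, which builds the identification of $y[R(\wb)]$ with $\Xi(\wb)+m(\wb)$ at lattice points directly into the definition \eqref{eq:hat_tau} via the white-noise coupling, so that step is exact rather than merely $o_\PP(1)$.
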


\begin{proof}
Applying the triangle inequality,
\beqn
|\hat \tau - \tilde \tau| 
&\le& \max_{\wb \in \Wcal} \min_{\wb' \in \Wcal'} | a(\wb) \left( \Xi(\wb) + m(\wb) - a(\wb) \right) - a(\wb') \left( \Xi(\wb') + m(\wb') - a(\wb') \right) | \\
&& + \max_{\wb \in \Wcal} \min_{\wb' \in \Wcal'} |b(\wb) - b(\wb')|.
\eeqn
For the second term, we use the fact that $b$ is Lipschitz and that $\Wcal'$ is an $\epsilon$-covering, to get
\[
\max_{\wb \in \Wcal} \min_{\wb' \in \Wcal'} |b(\wb) - b(\wb')| \le \max_{\wb, \wb' \in \Wcal : \delta (\wb, \wb') \le \epsilon} |b(\wb) - b(\wb')| \le L \epsilon.
\]
For the first term, it is bounded by 
\beqn
&& \max_{\wb \in \Wcal}  \left( \Xi(\wb) + m(\wb) - a(\wb) \right) \min_{\wb' \in \Wcal'} | a(\wb) - a(\wb') | \\
&& + \max_{\wb \in \Wcal} a(\wb) \min_{\wb' \in \Wcal'} \left[| \Xi(\wb) - \Xi(\wb') | + | m(\wb) - m(\wb') | + | a(\wb) - a(\wb') | \right].
\eeqn
We have
\[
\min_{\wb' \in \Wcal'} | a(\wb) - a(\wb') | \le L \epsilon, \quad \forall \wb \in \Wcal,
\]
by the fact that $a$ is Lipschitz and $\Wcal'$ is an $\epsilon$-covering for $\Wcal$; we have
\[
\max_{\wb \in \Wcal}  \left( \Xi(\wb) + m(\wb) - a(\wb) \right) \le \max_{\wb \in \Wcal} \Xi(\wb) + \mu = O_\PP\left( \sqrt{ \log(n/\underline h)} \right) + \mu,
\]
by \lemref{xi_bound}, the fact that $m(\wb) \le \mu$ and $a(\wb) \ge 0$ for any $\wb \in \Wcal$; we have
\[\max_{\wb \in \Wcal} a(\wb) \le \sqrt{2d \log(n/\underline h)},\]
as well as 
\[
\min_{\wb' \in \Wcal'} \left[| \Xi(\wb) - \Xi(\wb') | + | m(\wb) - m(\wb') | + | a(\wb) - a(\wb') | \right] \le O_\PP \left(\epsilon \sqrt{ \log (n/(\underline h \epsilon^4)}\right) + \mu \epsilon / 2 + L \epsilon,
\]
for all $\wb \in \Wcal$, by \lemref{xi_bound}.
From this, we conclude.
\end{proof}

For the oracle and multiscale scan statistics, $a$ and $b$ are constant in $\wb$ and so they are trivially Lipschitz.
For the adaptive multiscale scan, we verify below that they indeed satisfy~\eqref{ab}.

\begin{lemma}
\label{lem:ada_lipschitz}
For the adaptive multiscale scan, based on \eqref{v_a}, we have $a((\hb,\tb)) = v_{n,\hb}$ and $b((\hb,\tb)) = -\kappa - (4d - 1) \log \left( v_{n,\hb} \right)$, and they satisfy \eqref{ab} for some $L > 0$ and $\eps_0 > 0$ depending only on $d$.
\end{lemma}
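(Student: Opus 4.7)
The claim is a calculus exercise: I need to show that both $v_{n,\hb}$ and $\log v_{n,\hb}$ are Lipschitz in the shape $\hb$ with respect to $\delta$, uniformly on the region where $\delta$ is small. My strategy is to factor $v_{n,\hb}^2/2 = \sum_j \phi(h_j)$ with $\phi(h) = \log(n/h) + 2 \log(1 + \log(h/\underline h))$, control $\phi$ by a change of variables to $\log h$, and then connect $\log h$-differences to $\delta$ via the explicit formula from \eqref{eq:delta_bound}.

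\textbf{Step 1 (smoothness of $\phi$ in log-scale).} Writing $\phi(e^u)$ for $u \ge \log \underline h$, a direct differentiation gives $\tfrac{d}{du}\phi(e^u) = -1 + 2/(1 + u - \log \underline h)$, whose absolute value is bounded by $1$ on $u \ge \log \underline h$. Hence $|\phi(h) - \phi(g)| \le |\log h - \log g|$ uniformly.

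\textbf{Step 2 (small $\delta$ forces $h_j/g_j$ close to 1).} Starting from \eqref{delta-cov}, $\delta^2/2 = 1 - \prod_j a_j$ where $a_j = |R_0 \cap R_1|_j/\sqrt{h_j g_j} \in [0,1]$. Since each factor is in $[0,1]$, $\prod_j a_j \ge 1 - \delta^2/2$ forces $a_j \ge 1 - \delta^2/2$ for each coordinate. Using $|R_0 \cap R_1|_j \le \min(h_j, g_j)$, I obtain $\min(h_j, g_j)/\sqrt{h_j g_j} \ge 1 - \delta^2/2$, i.e.\ $h_j/g_j \in [(1-\delta^2/2)^2, (1-\delta^2/2)^{-2}]$. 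Choosing $\eps_0$ small enough (depending only on $d$), this ratio lies in a bounded interval like $[3/4, 4/3]$, and a short computation yields $|h_j - g_j|/\sqrt{h_j g_j} = |\sqrt{h_j/g_j} - \sqrt{g_j/h_j}| \le C_1 \delta^2$.

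\textbf{Step 3 (Lipschitz bound on $v_{n,\hb}$).} Combining Steps 1--2, $|\log h_j - \log g_j| \le |h_j - g_j|/\min(h_j, g_j) \le C_2 |h_j - g_j|/\sqrt{h_j g_j} \le C_3 \delta^2$, so
\[
|v_{n,\hb}^2 - v_{n,\gb}^2| \;\le\; 2 \sum_j |\log h_j - \log g_j| \;\le\; C_4 d\, \delta^2.
\]
Since $\phi(h) \ge \log(n/h) \ge 1$ for $h \le n/e$, we have $v_{n,\hb} \ge \sqrt{2d}$, hence
\[
|v_{n,\hb} - v_{n,\gb}| \;=\; \frac{|v_{n,\hb}^2 - v_{n,\gb}^2|}{v_{n,\hb} + v_{n,\gb}} \;\le\; \frac{C_4 d}{2\sqrt{2d}}\, \delta^2 \;\le\; L_1 \delta,
\]
where the last step uses $\delta \le \eps_0$ to pass from $\delta^2$ to $\delta$. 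This establishes the bound for $a$.

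\textbf{Step 4 (Lipschitz bound on $b$).} For $b$, since $b(\wb) = -\kappa - (4d-1)\log v_{n,\hb}$ and $\log$ is Lipschitz on $[\sqrt{2d}, \infty)$, Step 3 gives $|b(\wb_1) - b(\wb_2)| \le (4d-1)/\sqrt{2d}\cdot |v_{n,\hb_1} - v_{n,\hb_2}| \le L_2 \delta$. Taking $L = \max(L_1, L_2)$ and shrinking $\eps_0$ if needed completes the proof. The only potentially delicate point is Step 2, which requires carefully exploiting both that $a_j \le 1$ (to extract each coordinate from the product) and the comparability bound to translate $a_j$-control into a bound on $|h_j - g_j|/\sqrt{h_j g_j}$ that is quadratic in $\delta$; everything else is routine bookkeeping.
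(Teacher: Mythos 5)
Your proof is correct and follows essentially the same route as the paper: reduce the claim to Lipschitz control of $v_{n,\hb}^2$ in the log-shape variables, bound $\sum_j|\log h_j-\log g_j|$ by a constant multiple of $\delta^2$ via the product formula \eqref{delta-cov}, and convert $\delta^2$ to $\eps_0\,\delta$ at the end. The only cosmetic difference is that you extract the per-coordinate bound from the product factor by factor (each factor being at most $1$), whereas the paper telescopes the product into $-2\log\bigl(1-\tfrac12\delta_\ddag(\hb,\gb)\bigr)$; both give the same estimate.
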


\begin{proof}
Let $f(\hb) = v_{n,\hb}^2/2$.  Since $2 f(\hb) \ge 1$ and $\log x$ has derivative bounded by $1$ over $[1,\infty)$, it is sufficient to show that $(\hb,\tb) \to f(\hb)$ is Lipschitz with respect to $\delta$.
From \eqref{delta-cov}, we see that 
$\delta^2 ((\hb, \tb), (\gb, \sbb)) \ge \delta_\ddag(\hb, \gb)$, where 
$\frac12 \delta_\ddag(\hb, \gb) = 1 - \prod_j (h_j \wedge g_j)/\sqrt{h_j g_j}$, so that we may work with $\delta_\ddag$ instead of $\delta^2$.
By the fact that $\log$ has derivative bounded by 1 on $[1,\infty)$, 
\begin{eqnarray*}
|f(\hb) - f(\gb)| 
&\le& \sum_{j = 1}^d |\log h_j - \log g_j | + 2 \sum_{j = 1}^d |\log (1 + \log (h_j / \underline h)) - \log (1 + \log (g_j/\underline h)) | \\
&\le& 3 \sum_{j = 1}^d |\log h_j - \log g_j |,
\end{eqnarray*}
with
\[ \sum_{j = 1}^d |\log h_j - \log g_j | = 2 \sum_{j=1}^d \log \frac{\sqrt{h_j g_j}}{h_j \wedge g_j} = -2 \log \big(1-\tfrac12 \delta_\ddag(\hb, \gb)\big) \le 2 \delta_\ddag(\hb, \gb), \]
when $\delta_\ddag(\hb, \gb) \le 1$.  
Finally, if $\delta((\hb,\tb),(\gb,\sbb)) \le \epsilon_0$ as in \eqref{ab} then $\delta^2((\hb,\tb),(\gb,\sbb)) \le \epsilon_0 \delta((\hb,\tb),(\gb,\sbb))$.

\end{proof}

The following lemma allows us to approximate a discrete scan with its continuous counterpart.

\begin{lemma}
\label{lem:Z_epsnet}
$\Wcal' = \Wcal \cap \ZZ^{2d}$ is a $(\sqrt{4d/\underline h})$-covering for $\Wcal$ with respect to $\delta$.
\end{lemma}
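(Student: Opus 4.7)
The natural idea is coordinate-wise rounding. Given an arbitrary $\wb = (\hb, \tb) \in \Wcal$, I would take $\wb' = (\hb', \tb')$ with $h_j' = \lfloor h_j \rfloor$ and $t_j' = \lfloor t_j \rfloor$ for each $j \in [d]$, and then proceed in two steps: first verify that $\wb' \in \Wcal \cap \ZZ^{2d}$, and then bound $\delta(\wb, \wb')$ using an inequality already derived inside the proof of \lemref{entropy}.

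For membership, recall that $\underline h$ and $\overline h$ are integers, so $h_j \in [\underline h, \overline h]$ immediately gives $h_j' = \lfloor h_j \rfloor \in [\underline h, \overline h]$. Because $h_j' \le h_j$ and $t_j \le n - h_j$, we also get $0 \le t_j' \le t_j \le n - h_j \le n - h_j'$; since $n - h_j'$ is an integer, $t_j' \in [0, n - h_j']$ automatically. Hence $\wb' \in \Wcal'$. The only subtlety here is the choice of flooring (rather than rounding or ceiling): shrinking both $\hb$ and $\tb$ is what simultaneously guarantees $h_j' \ge \underline h$ (using the integrality of $\underline h$) and $t_j' + h_j' \le n$ for all $j$ without any boundary case analysis.

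For the distance bound, I would invoke the inequality \eqref{eq:delta_bound} from the proof of \lemref{entropy}, namely
\[
\tfrac{1}{2}\delta^2(\wb, \wb') \le \sum_{j=1}^d \frac{|h_j - h_j'| + |t_j - t_j'|}{\sqrt{h_j h_j'}}.
\]
By construction $|h_j - h_j'| \le 1$ and $|t_j - t_j'| \le 1$, while $\sqrt{h_j h_j'} \ge \underline h$, so each summand is at most $2/\underline h$. Summing over $j$ gives $\delta^2(\wb, \wb') \le 4d/\underline h$, and hence $\delta(\wb, \wb') \le \sqrt{4d/\underline h}$. Since $\wb$ was arbitrary, $\Wcal'$ is a $\sqrt{4d/\underline h}$-covering, as claimed. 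There is no real obstacle beyond being careful at the boundary of $\Wcal$, which is precisely what fixing the direction of rounding takes care of.
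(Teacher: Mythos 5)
Your proposal is correct and is essentially the paper's own argument: round each coordinate down to $\lfloor h_j\rfloor,\lfloor t_j\rfloor$ (which lies in $\Wcal'$ since $\underline h$ is an integer) and bound $\delta^2$ via the inequality \eqref{eq:delta_bound}, using $|h_j-\lfloor h_j\rfloor|,|t_j-\lfloor t_j\rfloor|\le 1$ and $\sqrt{h_j\lfloor h_j\rfloor}\ge\underline h$ to get $\delta^2\le 4d/\underline h$. The only difference is that you spell out the membership check that the paper dismisses as ``by construction.''
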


\begin{proof}
Let $\wb = (\hb,\tb) \in \Wcal$ and define $\wb' = (\lfloor h_1 \rfloor, \ldots, \lfloor h_d \rfloor, \lfloor t_1 \rfloor, \ldots, \lfloor t_d \rfloor)$, which is in $\Wcal'$ by construction.  By \eqref{eq:delta_bound}, and recalling that $\underline h$ is an integer,
\[
\delta^2 (\wb,\wb') \le 2 \sum_{j = 1}^d \frac{|h_j - \lfloor h_j \rfloor| + |t_j - \lfloor t_j \rfloor|}{\sqrt{h_j \lfloor h_j \rfloor}} \le \frac{4 d}{\underline h}. 
\]
\end{proof}

\subsection{Proofs: main results}

The following lemma will allow us to derive the asymptotic threshold from the excursion probabilities that we will derive in the following proofs.

\begin{lemma}
\label{lem:u_rate}
Let $s$ and $t$ be constants, let $(\eta_m)$ be a sequence tending to 0, and define
\beq\label{u_rate}
u_m = \sqrt{2 \log m} + \frac{s \log (\sqrt{2 \log m}) + t +\eta_m}{\sqrt{2 \log m}}.
\eeq
Then
\[
e^t m \, u_m^s e^{- \frac 12 u_m^2}  = 1 + O\Big(\eta_m + \frac{(\log\log m)^2}{\log m}\Big), \quad m \to \infty.
\]
\end{lemma}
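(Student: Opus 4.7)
The plan is a direct substitution and Taylor expansion. Set $v_m := \sqrt{2 \log m}$, so that $m = e^{v_m^2/2}$ and $u_m = v_m + (s \log v_m + t + \eta_m)/v_m$. Expanding the square gives
\[
\tfrac12 u_m^2 = \tfrac12 v_m^2 + (s \log v_m + t + \eta_m) + \frac{(s \log v_m + t + \eta_m)^2}{2 v_m^2},
\]
which exhibits the three natural error scales: the leading $v_m^2/2$ cancels $m$, the linear-in-$\log v_m$ term will cancel the $v_m^s e^t$ factors, and the quadratic remainder is the main lower-order term.

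Next I would plug this into the quantity of interest. Using $m\, e^{-v_m^2/2} = 1$ and $e^{-(s \log v_m + t + \eta_m)} = e^{-t} v_m^{-s} e^{-\eta_m}$, I get
\[
e^t m\, u_m^s\, e^{-u_m^2/2} = \Big(\tfrac{u_m}{v_m}\Big)^s e^{-\eta_m} \exp\!\Big(-\frac{(s \log v_m + t + \eta_m)^2}{2 v_m^2}\Big).
\]

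Finally, I would estimate each of the three factors. Since $u_m/v_m = 1 + (s \log v_m + t + \eta_m)/v_m^2$, a one-term Taylor expansion gives $(u_m/v_m)^s = 1 + O((\log v_m)/v_m^2)$. The exponential of the quadratic remainder equals $1 + O((\log v_m)^2 / v_m^2)$, and $e^{-\eta_m} = 1 - \eta_m + O(\eta_m^2)$. Collecting these and using $\log v_m = \tfrac12 \log(2 \log m)$ and $v_m^2 = 2 \log m$ so that $(\log v_m)^2/v_m^2 = O((\log \log m)^2/\log m)$ yields the stated bound.

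I do not expect any real obstacle: this is a routine asymptotic bookkeeping exercise. The only care needed is to keep track of which term in the expansion of $\tfrac12 u_m^2$ is the ``main'' one (it is the linear $s \log v_m + t + \eta_m$ piece, which is responsible for the cancellation), and to verify that the quadratic remainder dominates the cross-terms that arise from expanding the powers — both of which follow at once from the fact that $\log v_m / v_m^2 \to 0$.
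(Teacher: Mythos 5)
Your proof is correct and follows essentially the same route as the paper: both rest on expanding $\tfrac12 u_m^2 = \log m + (s\log v_m + t + \eta_m) + O((\log\log m)^2/\log m)$ together with $u_m/v_m = 1+O(\log\log m/\log m)$; the paper carries this out on the log scale and exponentiates, while you factor the product exactly and bound each factor, which is only a cosmetic difference.
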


\begin{proof}
We have
\[\begin{split}
u_m^2 &= 2 \log m + 2 t + s \log (2 \log m) + O\Big(\eta_m + \frac{(\log\log m)^2}{\log m}\Big), \\
\log u_m &= \tfrac12 \log(2 \log m) + O\Big(\frac{\log\log m}{\log m}\Big).
\end{split}\]
From this, we get that
\[
\log\Big(e^t m\, u_m^s e^{- \frac 12 u_m^2}\Big) =  O\Big(\frac{\log\log m}{\log m}\Big) + O\Big(\eta_m + \frac{(\log\log m)^2}{\log m}\Big) = O\Big(\eta_m + \frac{(\log\log m)^2}{\log m}\Big),
\]
and the result follows by applying the exponential.
\end{proof}


\subsubsection{Proof of Theorem \ref{thm:oracle_type1}}
\label{sec:proof_oracle_type1}
Since the shape $\hb^\star$ is given, in this section we let $\Xi(\tb) = \Xi(\hb^\star, \hb^\star \circ \tb)$, indexed only by the spatial parameter $\tb \in \Tcal = \times_{j = 1}^d [0,T_j]$ where $T_j = n / h^\star_j$.
This is after rescaling, where we divided the $j$th coordinate by $h^\star_j$.
Specifically, the reparametrized GRF has zero mean and covariance structure,
\[
\Cov ( \Xi(\tb), \Xi(\tb')) = \frac{\lambda(R(\hb^\star,\tb \circ \hb^\star) \cap R(\hb^\star, \tb' \circ \hb^\star))}{\sqrt{\lambda(R(\hb^\star,\tb \circ \hb^\star))\lambda(R(\hb^\star, \tb' \circ \hb^\star))}} = \lambda(R(\tb) \cap R(\tb')),
\]
where $R(\tb) := [\tb, \tb + {\bf 1}]$. 
The GRF $\Xi$ restricted to $\Tcal$ is stationary, thus it is locally stationary over $\Tcal$, but in $p = d$ dimensions.
Moreover, it has the local structure $C_\tb(\sbb) = \| \sbb \|_1$, by evaluating the local structure in \lemref{excursion} to the case in which $\hb = \one$ and $\gb = \zero$.
Hence, we know that it is homogeneous of order $\alpha = 1$ with $L = 1$ and $r_\tb(\ub/\|\ub\|) = \| \ub \|_1 / \|\ub\|$.
Due to the restriction to $\Tcal$, the tangent process of $\{\Xi(\tb)\}_{\tb \in \Tcal}$ must also be restricted $\Tcal$.
This will alter the high-excursion intensity from that given in \lemref{excursion}, which we derive next.

In order to prove \lemref{excursion}, \cite{kabluchko2011extremes} developed a technique for analyzing the tangent process using sums of independent Brownian motions.
We use the same approach.  First, note that a version of the tangent process is given by\[
U(\sbb) = \sum_{j=1}^d \sqrt 2 V_j(s_j), \quad \sbb = (s_1, \dots, s_d) \in \RR_+^d,
\] 
where $V_j$ are independent versions of the standard Brownian motion with drift $-|s_j|/\sqrt 2$.
(Notice that, when calculating the high excursion intensity $\Lambda$, the tangent process is restricted to the positive orthant.)
To see that $U$ is indeed a version of the tangent process, 
notice that, for all $\sbb, \sbb' \in \RR^d_+$, $\EE [U(\sbb)] = -\|\sbb\|_1$ and 
\[
\Cov(U(\sbb),U(\sbb')) = 2 \sum_{j=1}^d \Cov(V_j(s_j), V_j(s_j')) = 2\sum_{j=1}^d s_j \wedge s_j' = \|\sbb\|_1 + \|\sbb'\|_1 - \|\sbb - \sbb'\|_1~.
\]
Evaluating $\Lambda$,
\[
\Lambda = \lim_{T \rightarrow \infty} \frac{1}{T^d} \EE \exp \left( \tsup_{\sbb \in [0,T]^d} U(\sbb) \right)\\
= \left[ \lim_{T \rightarrow \infty}  \frac{1}{T} \EE \exp \left( \tsup_{s \in [0,T]} \sqrt 2 V_1(s) \right) \right]^d = H_1^d,\\
\]
where $H_1 = 1$ is Pickands constant for $\alpha = 1$ \citep{pickands1969upcrossing}.
We may now apply \lemref{chan_constant}, and the high excursion probability becomes
\beq \label{high_ex}
\PP \Big\{ \tsup_{\tb \in K} \Xi(\tb) > u \Big\} \sim \frac{\lambda(K)}{\sqrt{2\pi}} u^{2d - 1} e^{-u^2 / 2}.
\eeq
Until further notice, we take $u$ to be the critical value \eqref{u_o}.
Recall that $\lambda$ is the Lebesgue measure, here in $\RR^d$.
Define
\[
\overline \Ical = \times_{j=1}^d [\lfloor T_j \rfloor], \quad \underline \Ical = \times_{j=1}^d [\lfloor T_j \rfloor - 1].
\]
Consider the events $E_\ib = \big\{ \sup_{\tb \in R(\ib)} \Xi(\tb) > u \big\}$ for $\ib \in \overline \Ical$.
Notice that by translational invariance,
\beq \label{translation}
\forall \ib \in \overline \Ical, \quad \PP ( E_{\ib}) = \PP ( E_{\zero} ),
\eeq
where, applying \eqref{high_ex}, 
\beq \label{high_ex0}
\PP ( E_{\zero} ) = \PP \Big\{ \tsup_{\tb \in R(\zero)} \Xi(\tb) > u \Big\} \sim \frac{\lambda(R(\zero))}{\sqrt{2\pi}} u^{2d - 1} e^{-u^2 / 2} \sim e^{-\tau} \prod_{j=1}^d T_j^{-1} ,
\eeq
where the second equivalence comes from by applying \lemref{u_rate}.

We will now establish a Poisson limit for the above process over the entire set $\Tcal$ based on finite range dependence.
Two events, $E_{\ib}, E_{\ib'}$, are independent if $|i_j - i_j'| > 1$, for some $j \in [d]$.
Consider thus the `blanket' sets $B_\ib = \{ \ib' \ne \ib: |i_j - i_j'| \le 1, \forall j \in [d]\}$, and note that $|B_{\ib}| \le 3^d$, for all $\ib \in \overline \Ical$.
Hence, by \eqref{translation} and \eqref{high_ex0}, and the fact that $|\overline \Ical| = O( \prod_j T_j)$, we have
\[
A_1 := \sum_{\ib \in \overline \Ical} \sum_{\ib' \in B_{\ib}} \PP ( E_{\ib} ) \PP ( E_{\ib'} ) \le |\overline \Ical| (3^d) \PP(E_{\zero})^2 = O( \tprod_j T_j^{-1}) = o(1).
\]

Now take $\ib \in \overline\Ical$ and $\ib' \in B_{\ib}$.  
We have
\[
\PP ( E_{\ib} \cap E_{\ib'} ) = 2 \PP ( E_{\zero} ) - \PP ( E_{\ib} \cup E_{\ib'} ).
\]
We have \eqref{translation} and \eqref{high_ex0}, and as in \eqref{high_ex0}, except that $\lambda(R(\ib) \cup R(\ib')) = 2$ when $\ib' \ne \ib$, we also have
\[
\PP ( E_{\ib} \cup E_{\ib'} ) = \PP \big\{ \exists  \tb \in R(\ib) \cup R(\ib') : \Xi(\tb) > u \big\} \sim 2 e^{-\tau} \prod_{j=1}^d T_j^{-1} \sim 2 \PP ( E_{\zero} ).
\]
This implies that 
\[\PP ( E_{\ib} \cap E_{\ib'} ) = o\big( \tprod_j T_j^{-1} \big).\]
This holds uniformly over $\ib$ by translation invariance (translating the whole blanket set $B_\ib$) and also uniformly over $\ib'$ in the blanket because there are at most $3^d$ of these. 
Hence,
\[
A_2 := \sum_{\ib \in \overline \Ical} \sum_{\ib' \in B_{\ib}} \PP ( E_{\ib} \cap E_{\ib'} ) \le |\overline \Ical| (3^d) \ o\big( \tprod_j T_j^{-1} \big) = o(1).
\]

Finally, by \eqref{translation} and \eqref{high_ex0},
\[
M := \sum_{\ib \in \overline \Ical} \PP ( E_{\ib} ) = |\overline \Ical| \PP ( E_{\zero} ) \sim e^{-\tau} |\overline \Ical| \prod_{j=1}^d T_j^{-1} = e^{-\tau} \prod_{j=1}^d \frac{\lceil T_j \rceil}{T_j} \rightarrow e^{-\tau}.
\]
In our context, the Poisson approximation result stated in \citep[Th 1]{arratia1989two} implies that
\[
\Big| \PP\big(\cap_{\ib \in \overline \Ical} E_{\ib}^\comp\big) - e^{-M} \Big| \le A_1 + A_2, 
\]
from which we derive 
\[
\PP \left( \cap_{\ib \in \overline \Ical} E_{\ib}^\comp \right) \rightarrow e^{-e^{-\tau}}.
\]

In exactly the same way, we can also derive
\[
\PP \left( \cap_{\ib \in \underline \Ical} E_{\ib}^\comp \right) \rightarrow e^{-e^{-\tau}}.
\]

Because
\[
\PP \left( \cap_{\ib \in \overline \Ical} E_{\ib}^\comp \right) \le \PP \{ \exists \tb \in \Tcal : \Xi(\tb) \le u \} \le \PP \left( \cap_{\ib \in \underline \Ical} E_{\ib}^\comp \right)
\]
we conclude that 
\beq \label{oracle_Xi}
\PP \{ \exists \tb \in \Tcal : \Xi(\tb) \le u \} \rightarrow e^{-e^{-\tau}}.
\eeq
We can then express this result in terms of the behavior of $\tilde \tau$, defined in \eqref{eq:tilde_tau},
\[
\PP \left\{ \tilde \tau \le \tau \right\} \rightarrow e^{-e^{-\tau}} = 1 - \alpha,
\]
when \eqref{tau} holds.
This being true for all fixed $\tau$, by \lemref{approx_tau} with \lemref{Z_epsnet}, for $\hat \tau$ defined in \eqref{eq:hat_tau}, we have
\[
\PP \{ \hat \tau > \tau \} \sim \PP \left\{ \tilde \tau > \tau \right\} \to \alpha.
\]
We then invert this to get 
\[
\lim_{n \rightarrow \infty} \PP \Big\{ \tsup_{\tb \in \Tcal \cap \ZZ^{d}} \xi[R(\hb^\star, \hb^\star \circ \tb)] > u \Big\} = \alpha,
\]
which is what we needed to prove.

\subsubsection{Proof of Theorem \ref{thm:oracle_type2}}
\label{sec:proof_oracle_type2}
We keep the same notation used in \secref{proof_oracle_type1}.
While we worked under the null, we are now working under the alternative.
Redefine $\tb^\star$ such that $\hb^\star \circ \tb^\star$ is the true location of the rectangle of activation.
Let $\Tcal' = \Tcal \cap \times_j (\ZZ/h_j^{\star})$ and define 
\[
\Ucal_\eta = \{\tb \in \Tcal : \lambda(R(\tb) \cap R(\tb^\star)) \ge 1 - \eta \} \quad \textrm{ and } \quad \Ucal = \{\tb \in \Tcal: R(\tb) \cap R(\tb') \neq \emptyset, \textrm{ for some } \tb' \in \Ucal_\eta \}.
\]
Recall the definition of $\Upsilon$ in \eqref{Upsilon}.  In our present context, we can parameterize it by $\tb \in \Tcal$, and it satisfies
\[
\Upsilon(\tb) = \mu \lambda(R(\tb) \cap R(\tb^\star)) + \Xi(\tb).
\]
Throughout the following we assume that $\mu - v \to c \in \RR \cup \{- \infty, +\infty\}$, where $v = v_n$ is defined in \eqref{v_o}.  Recall the definition of the power and write it as a function of $c$,
\[
\beta(c) = \lim_{n \rightarrow \infty} \PP \Big\{ \tsup_{\tb \in \Tcal'} y[R((\hb^\star, \hb^\star \circ \tb))] > u \Big\}.
\]
Note that $\beta(c)$ is well defined by Slutsky's theorem and is clearly nondecreasing in $c$.
Hence, it suffices to consider the case where $c \in \RR$.
By \lemref{xi_bound}, Part 2, and the fact that $u \to \infty$,
\[
\PP\Big\{\tsup_{\tb \in \Ucal} \Xi(\tb) \ge u\Big\} = o(1).
\]
Thus, since
\[
\PP\Big\{\tsup_{\tb \in \Tcal} \Xi(\tb) \ge u\Big\} - \PP\Big\{\tsup_{\tb \in \Ucal} \Xi(\tb) \ge u\Big\} \le \PP\Big\{\tsup_{\tb \in \Tcal \backslash \Ucal} \Xi(\tb) \ge u\Big\} \le \PP\Big\{\tsup_{\tb \in \Tcal} \Xi(\tb) \ge u\Big\},
\]
we have
\beq \label{T-U}
\PP\Big\{\tsup_{\tb \in \Tcal \backslash \Ucal} \Xi(\tb) \ge u\Big\} \to \alpha,
\eeq
by \eqref{oracle_Xi}.
Hence,
\beqn
\PP \Big\{ \tsup_{\tb \in \Tcal} \Upsilon(\tb) > u \Big\} 
&\ge& \PP \Big\{ \tsup_{\tb \in \Tcal \backslash \Ucal} \Upsilon(\tb) > u \Big\} + \PP \{ \Upsilon(\tb^\star) > u\} \, \PP \Big\{ \tsup_{\tb \in \Tcal \backslash \Ucal} \Upsilon(\tb) \le u \Big\} \\
&\to& \alpha + \bar \Phi(c) (1 - \alpha). 
\eeqn
Select $\eta\rightarrow 0$ such that $\mu \eta \rightarrow \infty$.
By \lemref{xi_bound}, Part 2, we know that
\[
\tsup_{\tb \in \Ucal_\eta} \Upsilon(\tb) - \Upsilon(\tb^\star) \le \tsup_{\tb \in \Ucal_\eta} |\Xi(\tb) - \Xi(\tb^\star)| = O_\PP(1).
\]
Hence,
\beq \label{U_eta_conv}
\PP \Big\{ \tsup_{\tb \in \Ucal_\eta} \Upsilon(\tb) > u \Big\} \rightarrow \bar \Phi(c).
\eeq
Again by \lemref{xi_bound}, Part 2,
\[
\sup_{\tb \in \Ucal \backslash \Ucal_\eta} \Upsilon(\tb) \le \mu (1 - \eta) + O_\PP(1).
\]
Thus, in probability,
$
\mu - \sup_{\tb \in \Ucal \backslash \Ucal_\eta} \Upsilon(\tb) \rightarrow \infty,
$
implying that
\beq \label{U-U}
\PP \Big\{ \tsup_{\tb \in \Ucal \backslash \Ucal_\eta} \Upsilon(\tb) > u \Big\} \rightarrow 0.
\eeq
We then have
\beqn
\PP \Big\{ \tsup_{\tb \in \Tcal} \Upsilon(\tb) > u \Big\} 
&\le& \PP \Big\{ \tsup_{\tb \in \Tcal \backslash \Ucal} \Upsilon(\tb) > u \Big\} + \PP \Big\{ \tsup_{\tb \in \Ucal \backslash \Ucal_\eta} \Upsilon(\tb) > u \Big\} \\
&&\quad +\ \PP \Big\{ \tsup_{\tb \in \Ucal_\eta} \Upsilon(\tb) > u\Big\} \PP \Big\{ \tsup_{\tb \in \Tcal \backslash \Ucal} \Upsilon(\tb) \le u \Big\} \\
&\to& \alpha + \bar \Phi(c) (1 - \alpha), 
\eeqn
where the inequality is by independence of $(\Xi(\tb), \tb \in \Ucal_\eta)$ and $(\Xi(\tb), \tb \in \Tcal \setminus \Ucal)$, and the convergence is by \eqref{T-U}, \eqref{U_eta_conv}, and \eqref{U-U}.
We conclude that
\[
\beta(c) = \alpha + \bar \Phi(c) (1 - \alpha),
\]
and by \lemref{approx_tau} and \lemref{Z_epsnet}, we find that this holds for the discrete scan statistic as long as $\underline h = \omega(\log n)$, so that \eqref{ab_eps} is satisfied.

%

\subsubsection{Proof of Theorem \ref{thm:multiscale_type1}}

We now redefine $u$ as the critical value in \eqref{u_m}.
We assume that we are under the null.
Applying \citep[Th 1.4]{kabluchko2011extremes}, 
with $a \gets 1$ and $n \gets n / \underline h$, we get
\[
\lim_{n \rightarrow \infty} \PP \Big\{ \tsup_{\wb \in \Wcal} \Xi(\wb) \ge u \Big\} = \alpha.
\]
This translates into
\[
\lim_{n \rightarrow \infty} \PP \{ \tilde \tau > \tau \} = \alpha.
\]
Now we may apply \lemref{approx_tau} with \lemref{Z_epsnet} to obtain that the statistic $\hat \tau$ defined in \eqref{eq:hat_tau} satisfies $|\tilde \tau - \hat \tau| = o_\PP(1)$ and, therefore, we also have
\[
\lim_{n \rightarrow \infty} \PP \{ \hat \tau > \tau \} = \alpha.
\]
We then invert this to get
\[
\lim_{n \rightarrow \infty} \PP \Big\{ \tsup_{\wb \in \Wcal \cap \ZZ^{2d}} \xi[R(\wb)] \ge u \Big\} = \alpha.
\]

\subsubsection{Proof of Theorem \ref{thm:multiscale_type2}}
\label{sec:proof_multiscale_type2}
We assume that we are under the alternative.
The arguments are essentially identical to those in \secref{proof_oracle_type2}, except that this time both the scale and location vary.  In particular, we work with $\Wcal' = \Wcal \cap \ZZ^{2d}$, and 
\[
\Ucal_\eta = \Big\{\wb \in \Wcal : \frac{\lambda(R(\wb) \cap R(\wb^\star))}{\sqrt{\lambda(R(\wb))\lambda(R(\wb^\star))}} \ge 1 - \eta \Big\} \quad \textrm{ and } \quad \Ucal = \{\wb \in \Wcal: R(\wb) \cap R(\wb') \neq \emptyset, \forall \wb' \in \Ucal_\eta \},
\]
where $\wb^\star$ denotes the true scale and location of the rectangle of activation.  The remaining of the proof is now exactly the same.  

\subsubsection{Preliminaries}
The lemmata stated and proved in this section will be used to prove of Theorems~\ref{thm:adaptive_type1} and~\ref{thm:adaptive_type2}.
Until further notice, $u(\hb)$ (or $u_n(\hb)$ if we choose not to suppress the dependence on $n$) denotes the critical value defined in \eqref{u_a} while $v(\hb) = v_{n,\hb}$ denotes the function of $\hb$ in \eqref{v_a}.
The parameter $\tau$ remains fixed throughout.

The following technical lemma is used throughout this section.
\begin{lemma}
\label{lem:ua_lipschitz}
There exists $L > 0$ such that for all $\wb = (\hb,\tb), \wb' = (\hb',\tb') \in \Wcal$ such that $\delta(\wb,\wb') \le \epsilon_0$ as specified in \lemref{ada_lipschitz}, 
\[
|u(\hb) - u(\hb')| \le L \delta(\wb,\wb').
\]
\end{lemma}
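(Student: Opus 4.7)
The plan is to observe that $u(\hb)$ depends on $\hb$ only through $v(\hb)$, so the claim reduces to (i) showing $v(\hb)$ is Lipschitz in $\wb$ for $\delta$ (which is already in hand), and (ii) showing that the scalar map $v \mapsto u(v) := v + \frac{(4d-1)\log v + \kappa + \tau}{v}$ has uniformly bounded derivative on the range of $v$.

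For (i), Lemma~\ref{lem:ada_lipschitz} already supplies an $L_0 > 0$ and $\epsilon_0 > 0$, depending only on $d$, such that $|v(\hb) - v(\hb')| \le L_0 \delta(\wb,\wb')$ whenever $\delta(\wb,\wb') \le \epsilon_0$, since $v(\hb) = a(\wb)$ there. For (ii), direct differentiation gives
\[
\phi'(v) = 1 + \frac{(4d-1)\bigl(1-\log v\bigr) - \kappa - \tau}{v^2},
\]
so $\phi'$ is uniformly bounded on $[1,\infty)$ by a constant $M$ depending only on $d$ and $\tau$ (with $\kappa = -\log(4^d\sqrt{2\pi})$ depending only on $d$). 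By the assumption $\overline h \le n/e$ one has $\log(n/h_j) \ge 1$ for every coordinate of every admissible $\hb$, hence $v(\hb) \ge \sqrt{2d} \ge 1$, placing the range of $v$ inside $[1,\infty)$.

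The lemma now follows by combining (i) and (ii) via the mean value theorem: for any $\wb,\wb' \in \Wcal$ with $\delta(\wb,\wb') \le \epsilon_0$,
\[
|u(\hb) - u(\hb')| \;=\; |\phi(v(\hb)) - \phi(v(\hb'))| \;\le\; M\,|v(\hb) - v(\hb')| \;\le\; M L_0\,\delta(\wb,\wb'),
\]
so the claim holds with $L = M L_0$, a constant depending only on $d$ and the fixed level parameter $\tau$.

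There is really no obstacle here; the only small point worth checking carefully is that $v(\hb)$ stays bounded away from zero uniformly in $n$ and $\hb$, since otherwise the $1/v$ and $\log v / v^2$ terms in $\phi'$ could blow up. This is exactly what the assumption $\overline h \le n/e$ (and the integer structure of $\hb$) guarantees, so no additional hypothesis beyond what is already in force is needed.
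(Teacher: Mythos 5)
Your proof is correct and follows essentially the same route as the paper: both rest on the local Lipschitz property of $v_{n,\hb}$ from \lemref{ada_lipschitz} together with the uniform lower bound $v_{n,\hb} \ge \sqrt{2d} \ge 1$ (from $\overline h \le n/e$) to tame the $1/v$ and $\log v / v$ corrections. The only cosmetic difference is that you compose $v(\hb)$ with the scalar map $\phi$ via the mean value theorem, needing only the Lipschitz bound on $a=v_{n,\hb}$, whereas the paper decomposes $|u(\hb)-u(\hb')|$ by the triangle inequality and also invokes the Lipschitz bound on $b$ and the boundedness of $b/a$.
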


\begin{proof}
Recall the notation introduced in \eqref{eq:hat_tau}, where we now abbreviate $a(\hb) = a((\hb,\tb))$ and $b(\hb) = b((\hb,\tb))$, and these functions are specified in \lemref{ada_lipschitz}.  We have
\[
|u(\hb) - u(\hb')| \le |a(\hb) - a(\hb')| + \tau \Big| \frac{1}{a(\hb)} - \frac{1}{a(\hb')} \Big| + \Big| \frac{b(\hb)}{a(\hb)} - \frac{b(\hb')}{a(\hb')} \Big|.
\]
By \lemref{ada_lipschitz}, $|a(\hb) - a(\hb') | \le L \delta(\wb',\wb)$ for some $L > 0$ and $\delta(\wb',\wb) \le \epsilon_0$.
Working with the second term, we obtain,
\[
\Big| \frac{1}{a(\hb)} - \frac{1}{a(\hb')} \Big| = \frac{|a(\hb) - a(\hb')|}{a(\hb)a(\hb')} \le L \frac{\delta(\wb,\wb')}{a(\hb)a(\hb')} \le L \delta(\wb,\wb'),
\]
using the fact that $a(\hb), a(\hb') \ge 1$ because $n/\underline h \ge n / \overline h \ge e$ by assumption.
Working with the third term,
\[
\begin{split}
\Big| \frac{b(\hb)}{a(\hb)} - \frac{b(\hb')}{a(\hb')} \Big| 
&\le \Big| \frac{b(\hb)}{a(\hb)} - \frac{b(\hb')}{a(\hb)} \Big| + \Big| \frac{b(\hb')}{a(\hb)} - \frac{b(\hb')}{a(\hb')} \Big| \\
&\le \frac{L \delta(\wb,\wb')}{a(\hb)} + \frac{b(\hb')}{a(\hb')} \Big| \frac{a(\hb) - a(\hb')}{a(\hb)} \Big| \le L (1 + C) \delta(\wb,\wb'),
\end{split}
\]
because there exists a $C$ such that $b(\hb)/a(\hb) \le C$ for all allowed $\hb$. 
Combining these we find that $u(\hb)$ is indeed (locally) Lipschitz with respect to $\delta$, with constant $L' = L (2 + \tau + C)$.
\end{proof}

We will introduce some notation for the following lemmata.
Let $\hb \in [e \underline h, \overline h]^d$, define $\Tcal(\hb) = \times_{j=1}^d h_j [\lceil n / h_j \rceil]$, and let $\tb \in \Tcal(\hb)$.
Define the set
\[
K_{(\hb,\tb)} = \Big\{ (\gb,\sbb) \in \Wcal : \gb/\hb \in [e^{-1},1]^d, \sbb \in  [\tb - \hb, \tb] \Big\}
\]
and the event 
\beq \label{Ew-def}
E_{(\hb,\tb)} = \Big\{ \exists (\gb,\sbb) \in K_{(\hb,\tb)} : \Xi(\gb,\sbb)> u(\gb) \Big\}.
\eeq

\begin{lemma}
\label{lem:nonconstant_scan}
Let $\kb \in \ZZ_+^d$ be fixed in $n$, define $\hb = \underline h e^{\kb}$, and let $\tb \in \Tcal(\hb)$.  We have
\[
\PP \big( E_{(\hb,\tb)} \big) \sim e^{-\tau} n^{-d} \prod_{j=1}^d h_j [k_j^{-1} - (1 + k_j)^{-1}].
\]
\end{lemma}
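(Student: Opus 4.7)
The plan is to evaluate $\PP(E_{(\hb,\tb)})$ by an application of Chan's non-constant boundary crossing result (\lemref{chan_nonconstant}) to $K_{(\hb,\tb)}$, followed by a direct computation of the resulting integral. Since $K_{(\hb,\tb)}$ depends on $n$ through $\hb$ and $\tb$, I would first reduce to a fixed set using the invariance properties visible in \eqref{eq:Xi_covariance}: setting $\gb = \hb \circ \tilde\gb$ and $\sbb = \tb - \hb + \hb \circ \tilde\sbb$, the rescaled field $\tilde\Xi(\tilde\gb,\tilde\sbb) := \Xi(\hb\circ\tilde\gb, \tb - \hb + \hb\circ\tilde\sbb)$ has the same covariance structure on rectangles as $\Xi$, so by \lemref{excursion} is locally stationary with $\alpha = 1$ and the same high-excursion intensity formula. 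The event $E_{(\hb,\tb)}$ is equivalently $\{\exists(\tilde\gb,\tilde\sbb) \in \tilde K : \tilde\Xi(\tilde\gb,\tilde\sbb) > u(\hb\circ\tilde\gb)\}$ on the fixed set $\tilde K = [e^{-1},1]^d \times [0,1]^d$.

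Next I would apply \lemref{chan_nonconstant} with $p = 2d$ and boundary function $c_n(\tilde\gb,\tilde\sbb) = u(\hb\circ\tilde\gb)$. Condition \eqref{chan_nonconstant1} is immediate from $u \asymp \sqrt{\log(n/\underline h)}$ upon choosing an auxiliary scale $\zeta = \zeta_n$ decaying polylogarithmically; condition \eqref{chan_nonconstant2} follows from \lemref{ua_lipschitz}, since $|u(\hb\circ\tilde\gb)^2 - u(\hb\circ\tilde\gb')^2| \le |u(\hb\circ\tilde\gb) - u(\hb\circ\tilde\gb')|\cdot|u(\hb\circ\tilde\gb) + u(\hb\circ\tilde\gb')|$, and $\delta(\hb\circ\tilde\gb, \hb\circ\tilde\gb')$ is controlled by $\|\tilde\gb-\tilde\gb'\|_\infty$ uniformly in $\hb$ by the computation in \eqref{eq:delta_bound}. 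Unwinding the change of variables (the Jacobians $\prod h_j$ from the $\sbb$-rescaling and $\prod h_j^{-1}$ from the $\tilde g_j^{-2} d\tilde g_j$ factors cancel exactly), the conclusion of the lemma reads
\[
\PP(E_{(\hb,\tb)}) \sim \int_{K_{(\hb,\tb)}} u(\gb)^{4d-1}\,\frac{e^{-u(\gb)^2/2}}{\sqrt{2\pi}}\, 4^{-d}\prod_{j=1}^d g_j^{-2}\, d\gb\, d\sbb.
\]

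The integral is then computed explicitly. By \lemref{u_rate} applied with $s = 4d-1$, $t = \kappa + \tau$ (recall $\kappa = -\log(4^d\sqrt{2\pi})$ from \eqref{v_a}) and $m(\gb) := \prod_j(n/g_j)(1+\log(g_j/\underline h))^2$, one has $v(\gb) = \sqrt{2\log m(\gb)}$ and
\[
\frac{u(\gb)^{4d-1} e^{-u(\gb)^2/2}}{\sqrt{2\pi}} \sim \frac{4^d e^{-\tau}}{m(\gb)},
\]
uniformly over $\gb \in K_{(\hb,\tb)}$. The factors of $4^d$ and $4^{-d}$ cancel, and the $\sbb$-integral yields $\prod_j h_j$ since $\sbb$ ranges over a rectangle of volume $\prod_j h_j$. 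For the $\gb$-integral, substitute $y_j = \log(g_j/\underline h)$, so $dg_j/g_j = dy_j$ and $y_j \in [k_j-1, k_j]$, giving
\[
\int_{\prod_j[h_j/e,h_j]} \prod_j \frac{dg_j}{g_j(1+\log(g_j/\underline h))^2} = \prod_{j=1}^d \int_{k_j-1}^{k_j} \frac{dy_j}{(1+y_j)^2} = \prod_{j=1}^d \Big[\frac{1}{k_j} - \frac{1}{k_j+1}\Big].
\]
Combining all factors, including the $n^{-d}$ coming from the $(n/g_j)^{-1}$ in $m(\gb)^{-1}$, yields exactly $e^{-\tau} n^{-d} \prod_j h_j [k_j^{-1} - (k_j+1)^{-1}]$.

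The only delicate step is the verification of the hypotheses \eqref{chan_nonconstant1}-\eqref{chan_nonconstant2} of \lemref{chan_nonconstant}, together with the scale-reduction of $K_{(\hb,\tb)}$ to a fixed-in-$n$ set; the rest of the argument is a transparent change of variables. I expect the main technical obstacle to be ensuring the Lipschitz estimate on $u(\gb)^2$ is uniform in $\hb$ over the rescaled box, which requires combining \lemref{ada_lipschitz} and \lemref{ua_lipschitz} with the bound $\delta^2 \lesssim \sum_j |\tilde g_j - \tilde g_j'|$ on $\tilde K$ that is visible from \eqref{eq:delta_bound}.
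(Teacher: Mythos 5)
Your proposal is correct and follows essentially the same route as the paper: reduce $K_{(\hb,\tb)}$ to a fixed set by translation and scale invariance, verify conditions \eqref{chan_nonconstant1}--\eqref{chan_nonconstant2} of \lemref{chan_nonconstant} via \lemref{ua_lipschitz} and \eqref{eq:delta_bound}, then evaluate the resulting integral with \lemref{u_rate} and the substitution $y_j = \log(g_j/\underline h)$ to obtain $\prod_j[k_j^{-1}-(1+k_j)^{-1}]$. The only cosmetic differences are that you rescale anisotropically by $\hb$ (the paper rescales by $\underline h$, landing on $[e^{\kb-\one},e^{\kb}]\times[\zero,e^{\kb}]$) and you invoke uniformity of the \lemref{u_rate} asymptotics where the paper uses a uniform bound plus dominated convergence; both choices are sound.
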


\begin{proof}
First, by location invariance 
\[
\PP \Big\{ \exists (\gb,\sbb) \in K_{(\hb,\tb)} : \Xi(\gb,\sbb) > u_n(\gb) \Big\} = \PP \Big\{ \exists (\gb,\sbb) \in K_{(\hb,\hb)} : \Xi(\gb,\sbb) > u_n(\gb) \Big\}.
\]
Also, because for $(\gb_0,\sbb_0),(\gb_1,\sbb_1) \in \Wcal$,
\[
\frac{\lambda(R(\underline h^{-1} \gb_0,\underline h^{-1} \sbb_0) \cap R(\underline h^{-1} \gb_1,\underline h^{-1} \sbb_1))}{\sqrt{\lambda(R(\underline h^{-1} \gb_0, \underline h^{-1} \sbb_0))\lambda(R(\underline h^{-1} \gb_1,\ab \underline h^{-1} \sbb_1))}} = \frac{\lambda(R(\gb_0,\sbb_0) \cap R(\gb_1,\sbb_1))}{\sqrt{\lambda(R(\gb_0,\sbb_0))\lambda(R(\gb_1,\sbb_1))}},
\]
rescaling the set $\Wcal$ by $\underline h^{-1}$ does not change the covariance structure of $\Xi[\bar \Rcal]$.
Thus,
\beq \label{KwK0}
\PP \Big\{ \exists (\gb,\sbb) \in K_{(\hb,\hb)} : \Xi(\gb,\sbb) \ge u_n(\gb) \Big\} = \PP \Big\{ \exists (\gb,\sbb) \in K_0 : \Xi(\gb,\sbb) \ge u_n(\underline h \gb) \Big\}.
\eeq
for 
\[
K_0 = \underline h^{-1} K_{(\hb,\hb)} = [e^{\kb - \one},e^{\kb}] \times [\zero, e^{\kb}].
\]
Let $\Lambda(\hb) = 4^{-d} \prod_{i=1}^d h_i^{-2}$ be the high excursion intensity from Lemma \ref{lem:excursion}.
Set $c = 4^d \sqrt{2 \pi}$.
We now check the conditions of \lemref{chan_nonconstant}.
Here the boundary functions are $u_n(\hb)$ defined in \eqref{u_a}, and satisfy \eqref{chan_nonconstant1} with $\zeta = (\log n)^{-3/2}$.
To see this, first, notice that $K_0$ is fixed and Jordan measurable.
By \eqref{eq:delta_bound}, for any fixed $\gamma > 0$ small enough and $\wb_0 = (\gb_0,\sbb_0), \wb_1 = (\gb_1,\sbb_1) \in [K_0]_{\gamma}$ such that $\|\wb_0 - \wb_1\|_\infty \le \zeta$,
\[
\delta^2(\underline h \wb_0,\underline h \wb_1) = \delta^2(\wb_0,\wb_1) \le 2 \sum_j \frac{|g_{0,j} - g_{1,j}| + |s_{0,j} - s_{1,j}|}{\sqrt{g_{0,j} g_{1,j}}} \le 4 \zeta C_\gamma \sum_j e^{1 - k_j}
\]
where $C_\gamma$ is a small constant.
Thus by \lemref{ua_lipschitz}, for $\zeta$ small enough, 
\[
|u_n(\underline h \gb_0) - u_n(\underline h \gb_1)| \le L \delta(\underline h \wb_0,\underline h \wb_1) \le L \sqrt{4 \zeta C_\gamma \sum_j e^{1 - k_j}}.
\]
We also have that 
\beq \label{sup_u}
\sup_{(\gb,\sbb) \in [K_0]_{2\zeta}} u_n(\gb) = O(\sqrt{\log n})
\eeq 
and so
\beq \label{u2diff}
|u_n(\underline h \gb_0)^2 - u_n(\underline h \gb_1)^2| = O( \sqrt{\zeta \log n}) = o(1)
\eeq
uniformly over such $\wb_0,\wb_1$, which verifies \eqref{chan_nonconstant2}.
Furthermore, recalling that in the notation of \lemref{chan_nonconstant}, we have $\alpha = 1$ and $p = 2d$, we finally get \eqref{chan_nonconstant1}
\[
\sup_{(\gb,\sbb) \in [K_0]_{\zeta}} u_n(\gb)^{-4d} = O((\log n)^{-2d}) = o(\zeta).
\]
Hence, we have established the conditions of \lemref{chan_nonconstant}.
Applying it we have 
\begin{eqnarray*}
&&\PP \Big\{ \exists (\gb,\sbb) \in K_0 : \Xi(\gb,\sbb) > u_n(\underline h \gb) \Big\}\\
& \sim &\int_{[\zero,e^\kb]} \int_{[e^{\kb - \one},e^\kb]} \psi(u_n(\underline h \gb)) \Lambda(\gb) u_n(\underline h \gb)^{4d} {\rm d} \gb {\rm d} \tb \\
&= &\frac{e^{\sum_j k_j}}{\sqrt{2\pi}} \int_{[e^{\kb - \one},e^\kb]} \Lambda(\gb) u_n(\underline h \gb)^{4d - 1} e^{-u_n(\underline h \gb)^2 / 2} {\rm d} \gb\\
&= &\frac{e^{\sum_j k_j}}{4^d \sqrt{2\pi}} \int_{[e^{\kb - \one},e^\kb]} \big(\prod_j g_j^{-2}\big) u_n(\underline h \gb)^{4d - 1} e^{-u_n(\underline h \gb)^2 / 2} {\rm d} \gb.\\
\end{eqnarray*}
We have from Lemma \ref{lem:u_rate} that 
\[
u_n(\underline h \gb)^{4d - 1} e^{-u_n(\underline h \gb)^2 / 2} \Big/ \Big[ \frac{c}{e^\tau \prod_j (n \log^2 (e g_j) / (\underline h g_j))} \Big] \rightarrow 1, \quad \forall \gb \in [e^{\kb - \one},e^\kb],
\]
which implies that 
\[
\frac{n^d}{\underline h^d} u_n(\underline h \gb)^{4d - 1} e^{-u_n(\underline h \gb)^2 / 2}  \rightarrow c e^{-\tau} \prod_j \frac{g_j}{\log^2(e g_j)}  , \quad \forall \gb \in [e^{\kb - \one},e^\kb].
\]
We see by \lemref{u_rate}, uniformly over $n$,
\[
\sup_{(\gb,\sbb) \in K_0} \frac{n^d}{\underline h^d} u_n(\underline h \gb)^{4d - 1} e^{-u_n(\underline h \gb)^2 / 2} < + \infty.
\]
By dominated convergence,
\[
\frac{n^d}{\underline h^d} \int_{[e^{\kb - \one},e^\kb]}  \prod_j g_j^{-2} (u_n(\underline h \gb))^{4d - 1} e^{-u_n(\underline h \gb)^2 / 2} {\rm d} \gb \rightarrow \int_{[e^{\kb - \one},e^\kb]} c e^{-\tau} \prod_j [g_j \log^2(e g_j)]^{-1} {\rm d} \gb.
\]
Thus,
\[
\frac{e^{\sum_j k_j}}{4^d \sqrt{2\pi}} \int_{[e^{\kb - \one},e^\kb]} \big(\prod_j g_j^{-2}\big) u_n(\underline h \gb)^{4d - 1} e^{-u_n(\underline h \gb)^2 / 2} {\rm d} \gb \sim 
\frac{\underline h^d e^{\sum_j k_j}}{n^d e^\tau} \prod_j [k_j^{-1} - (1 + k_j)^{-1}].
\]
We have our result because $h_j = \underline h e^{k_j}, j \in [d]$.
\end{proof}

\begin{lemma}
\label{lem:nonconstant_bound}
Resume the notation of \lemref{nonconstant_scan}.
There exists a constant $C > 0$ not depending on $n$, $\kb$, or $\tb$ (but possibly dependent on $\tau$ or $d$) such that
\[
\PP \big( E_{(\hb,\tb)} \big) \le C \prod_j \frac{h_j}{n k_j^2}.
\]
\end{lemma}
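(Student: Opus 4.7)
The plan is to mimic the computation in the proof of Lemma \ref{lem:nonconstant_scan}, but produce a non-asymptotic upper bound that is uniform in $\kb$ rather than a pointwise asymptotic equivalence. I would first exploit the scale and translation invariance of $\Xi$: the change of variables $\gb' = \gb / \hb$ and $\sbb' = (\sbb - \tb + \hb)/\hb$ (componentwise) rescales $K_{(\hb,\tb)}$ onto the fixed compact set $K_0 = [e^{-1},1]^d \times [0,1]^d$, so that
\[
\PP(E_{(\hb,\tb)}) = \PP\Big\{\exists (\gb',\sbb') \in K_0 : \Xi(\gb',\sbb') > u_n(\hb \circ \gb')\Big\}.
\]
Because the Jacobian cancels the shape factor in $\Lambda$, after the rescaling I expect to obtain, via an upper-bound form of Lemma \ref{lem:chan_nonconstant}, a uniform inequality
\[
\PP(E_{(\hb,\tb)}) \le C_1 \int_{K_0} u_n(\hb \circ \gb')^{4d-1} \psi\bigl(u_n(\hb \circ \gb')\bigr) \prod_{j} (g_j')^{-2}\, d\gb'\, d\sbb',
\]
for some universal $C_1>0$. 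The conditions \eqref{chan_nonconstant1}--\eqref{chan_nonconstant2} hold uniformly in $\kb$: the lower bound $u_n(\hb\circ\gb') \ge \sqrt{2d\log(n/\overline h)}$ takes care of \eqref{chan_nonconstant1}, and the Lipschitz estimate of Lemma \ref{lem:ua_lipschitz} takes care of \eqref{chan_nonconstant2}.

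Next I would invoke Lemma \ref{lem:u_rate} with $s = 4d-1$ and $t = \kappa + \tau$, whose error term is uniform since its argument stays bounded away from zero over the relevant range, to obtain
\[
u_n(\hb \circ \gb')^{4d-1} \psi\bigl(u_n(\hb \circ \gb')\bigr) \le C_2 \prod_{j} \frac{h_j g_j'}{n\,(1 + \log(h_j g_j'/\underline h))^{2}}.
\]
For $\gb' \in [e^{-1},1]^d$ and $h_j = \underline h\, e^{k_j}$ with $k_j \ge 1$, one has $1 + \log(h_j g_j'/\underline h) = 1 + k_j + \log g_j' \ge k_j$, so the last display is at most $C_2 \prod_j h_j g_j'/(n k_j^2)$. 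Plugging back into the integral, the $\sbb'$-integral contributes $1$, and
\[
\int_{[e^{-1},1]^d} \prod_{j} \frac{h_j g_j'}{n k_j^2 (g_j')^{2}}\, d\gb' = \prod_{j} \frac{h_j}{n k_j^2} \int_{e^{-1}}^{1} \frac{dg_j'}{g_j'} = \prod_{j} \frac{h_j}{n k_j^2},
\]
which yields the claimed bound with $C = C_1 C_2 4^{-d}$.

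The main obstacle is justifying the uniform upper-bound version of Lemma \ref{lem:chan_nonconstant}: as stated it is an asymptotic equivalence as $\zeta \to 0$, whereas here I need a one-sided inequality $\PP \le C_1 \int$ with a single constant $C_1$ across the entire family of boundary functions $\{u_n(\hb \circ \cdot)\}_\kb$ and for all $n$ sufficiently large. This should follow from a careful inspection of the proof of Chan and Lai's Theorem 2.8, observing that the constants appearing there depend only on the local stationarity parameters of $\Xi$ and on the Lipschitz modulus of the boundary, both of which are uniform in $\kb$ here. Alternatively, one can split the domain $K_0$ into a small number of cubes and handle each with a Borell--TIS bound applied to the minimum of $u_n(\hb\circ\gb')$ on the cube, absorbing the chaining-style correction using the entropy estimate of Lemma \ref{lem:entropy}.
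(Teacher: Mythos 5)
Your reduction to the fixed set $K_0$ and the final integral computation mirror the paper's, but the step you yourself flag as the main obstacle is a genuine gap, and neither of your proposed repairs closes it. \lemref{chan_nonconstant} is an asymptotic equivalence for a \emph{fixed} set and a family of boundaries indexed by the single parameter $\zeta \to 0$; what you need is a one-sided inequality with one constant valid simultaneously for all $n$ and all $\kb$, i.e.\ over a doubly-indexed family of boundary functions $u_n(\hb \circ \cdot)$, and ``careful inspection of the proof of Theorem 2.8'' is an announcement, not an argument. Your fallback is quantitatively too weak: Borell--TIS on (a bounded number of) subcubes of $K_0$ gives a bound of order $\exp\{-(c-a)^2/2\}$ with $a = \EE \sup \Xi$ over the subcube bounded away from zero, so relative to the Pickands-type quantity $c^{4d-1}e^{-c^2/2}$ you lose a factor $e^{ac} = e^{\Theta(\sqrt{\log n})}$; shrinking the subcubes only trades this for an unboundedly growing (polylogarithmic) number of cubes. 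Either way the claimed bound $C\prod_j h_j/(n k_j^2)$ with $C$ free of $n$ is lost, and that $n$-free form is exactly what the lemma is for: it supplies the dominating sequence $D_\kb = C_1 \prod_j k_j^{-2}$ in the dominated-convergence step of \lemref{up_scale_adascan}, so any extra factor growing with $n$ defeats the purpose.

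The paper avoids the uniformity problem with a simpler device that you could substitute directly: bound the varying boundary from below by the constant level $c_\wb = \min_{\gb \in [e^{-1}\hb, \hb]} u_n(\gb)$ on the block, and apply the \emph{constant}-boundary \lemref{chan_constant} to the rescaled field on the fixed set $[e^{-1},1]^d \times [0,1]^d$. Since the set and the field are the same for every $\kb$ and only the scalar level $c_\wb \to \infty$ (uniformly) varies, the asymptotic relation in \lemref{chan_constant} yields a constant $C_1$ uniform in $(n,\kb,\tb)$ with no further work. Then \lemref{u_rate}, together with the fact that $v_{n,\hb'}$ is minimized over $\hb' \in [e^{-1}\hb, \hb]$ at $\hb'=\hb$ (for $h_j \ge e\underline h$), gives $c_\wb^{4d}\psi(c_\wb) \le C_3 \prod_j h_j /(n k_j^2)$, which is your concluding computation. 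Replacing the boundary by its blockwise minimum costs only a constant factor, because within a block $v_{n,\cdot}^2$ varies by $O(1)$; with that substitution your argument becomes the paper's proof.
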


\begin{proof}
Let $\wb = (\hb, \tb)$.  By \eqref{KwK0},
\[
\PP(E_\wb) = \PP \Big\{ \exists (\gb,\sbb) \in K_0 : \Xi(\gb,\sbb) > u(\underline h \gb) \Big\} \le \PP \Big\{ \exists (\gb,\sbb) \in K_0 : \Xi(\gb,\sbb) > c_{\wb} \Big\}
\]
where
\[
c_{\wb} = \min_{(\gb,\sbb) \in K_{\wb}} u(\gb) = \min_{\gb \in [e^{-1} \hb, \hb]} u(\gb).
\]
By scale invariance, 
\[\begin{split}
\PP \Big\{ \exists (\gb,\sbb) \in K_0 : \Xi(\gb,\sbb) \ge c_{\wb} \Big\}
&= \PP \Big\{ \exists \gb \in [e^{-1},1]^d, \sbb \in [0,1]^d : \Xi(\gb,\sbb) \ge c_{\wb} \Big\} \\
&\le C_1 c_\wb^{4d} \psi(c_\wb) \int_{\wb' \in [e^{-1},1]^d \times [0,1]^d} \Lambda(\wb') {\rm d} \wb',
\end{split}\]
for some constant $C_1 > 0$, by an application of \lemref{chan_constant}.
On the one hand, using the form for $\Lambda$ given in \lemref{excursion}, we get
\[
\int_{\wb' \in [e^{-1},1]^d \times [0,1]^d} \Lambda(\wb') {\rm d} \wb' = 4^{-d} \prod_j \int_{e^{-1}}^1 {g_j}^{-2} {\rm d}g_j < \infty.
\]
On the other hand, by \lemref{u_rate} there is a constant $C_2$ (not dependent on $\kb$, $n$, or $\tb$) such that $\forall (\gb,\sbb) \in K_0$,
\[
u(\underline h \gb)^{4d} \psi(u(\underline h \gb)) \le e^{-\tau - \kappa} e^{-v(\underline h \gb)^2/2} \Big(1 + C_2 \frac{\log v(\underline h \gb)}{v(\underline h \gb)}\Big).
\]
Since $\min_{\gb \in [e^{\kb - \one}, e^\kb]} v(\underline h \gb) \to \infty$ uniformly over $\kb \in \ZZ_+^d$, we have 
\[
c_\wb^{4d} \psi(c_\wb) \le (1+o(1)) e^{-\tau-\kappa} \exp\Big[- \tfrac12 \min_{\gb \in [e^{\kb - \one}, e^\kb ]} v^2(\underline h \gb)\Big] \le C_3 \prod_j \frac{h_j}{n} \log^{-2}(h_j/\underline h),
\]
where $C_3$ is some constant, using the fact that $\min_{\hb' \in [e^{-1} \hb,\hb]} v(\hb') = v(\hb)$ for $h_j \ge e \underline h, \forall j$.
We conclude that there exists a constant $C_4$ such that, for all such $\wb$,
\[
\PP(E_\wb) \le C_4 \prod_j \frac{h_j}{n} \log^{-2}(h_j/\underline h).
\]
\end{proof}

\begin{lemma}
\label{lem:low_scale_adascan}
For all $A \in \ZZ_+$, let $\Ucal_A = \big\{ (\hb, \tb) \in \Wcal : h_j \le \underline h e^A, \forall j \in [d] \big\}$.
Then 
\[
\lim_{A \rightarrow \infty} \lim_{n \rightarrow \infty} \PP \Big\{ \Xi(\hb,\tb) > u_n(\hb) \textrm{ for some } (\hb,\tb) \in \Ucal_A \Big\} = \alpha.
\]
\end{lemma}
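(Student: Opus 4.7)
The plan is to mirror the Chen--Stein Poisson approximation used in the proof of \thmref{oracle_type1} (see \secref{proof_oracle_type1}), now tiling the scale range using the blocks $E_{(\hb(\kb),\tb)}$ introduced in \eqref{Ew-def}. Specifically, I set $\hb(\kb) = \underline h\, e^{\kb}$ for $\kb \in [A]^d$ and $\tb \in \Tcal(\hb(\kb))$; the shapes $\gb/\hb(\kb) \in [e^{-1},1]^d$ together with the locations $\sbb \in [\tb-\hb(\kb),\tb]$ partition $\Ucal_A$ up to boundary effects that will contribute only $o(1)$. The event of interest equals $\bigcup_\kb \bigcup_\tb E_{(\hb(\kb),\tb)}$, and I aim to show its probability tends to $1 - e^{-M_A}$, where $M_A \to e^{-\tau}$ as $A\to\infty$.

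I would first compute the first moment. By \lemref{nonconstant_scan}, at each fixed $\kb$,
\[
\sum_{\tb \in \Tcal(\hb(\kb))} \PP\big(E_{(\hb(\kb),\tb)}\big) \sim \prod_{j=1}^d \Big\lceil \tfrac{n}{h_j(\kb)}\Big\rceil \cdot e^{-\tau} n^{-d} \prod_{j=1}^d h_j(\kb)\big[k_j^{-1} - (k_j+1)^{-1}\big] \to e^{-\tau}\prod_{j=1}^d \big[k_j^{-1} - (k_j+1)^{-1}\big],
\]
and telescoping over $\kb \in [A]^d$ yields $M_n(A) \to e^{-\tau}(1 - 1/(A+1))^d =: M_A$, with $M_A \to e^{-\tau}$ as $A\to\infty$. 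Next, for the Chen--Stein step, I declare two indices $(\kb,\tb),(\kb',\tb')$ \emph{neighbors} if the supports of the rectangles parametrized by $K_{(\hb(\kb),\tb)}$ and $K_{(\hb(\kb'),\tb')}$ intersect --- equivalently, the rescaled $\ell_\infty$ distance between blocks is bounded by a constant. For non-neighbors, the corresponding events depend on the white noise $\Xi$ over disjoint regions and are therefore independent. Because $A$ is fixed before sending $n\to\infty$, the neighborhood cardinality is bounded by a constant $C_A$, and \lemref{nonconstant_bound} gives $A_1 := \sum\sum_{\text{neigh.}} \PP(E_\wb)\PP(E_{\wb'}) = O\big(C_A (\max_\wb \PP(E_\wb))\, M_n(A)\big) = o(1)$. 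The second Chen--Stein term $A_2 := \sum\sum_{\text{neigh.}} \PP(E_\wb \cap E_{\wb'})$ is handled as in \secref{proof_oracle_type1}: apply \lemref{chan_nonconstant} to the enlarged set $K_\wb \cup K_{\wb'}$, which is fixed, bounded, and Jordan-measurable, to conclude $\PP(E_\wb \cup E_{\wb'}) \sim \PP(E_\wb) + \PP(E_{\wb'})$, hence $\PP(E_\wb \cap E_{\wb'}) = o(\PP(E_\wb)+\PP(E_{\wb'}))$ uniformly. The Poisson approximation of \citep{arratia1989two} then yields $\PP\{\exists(\hb,\tb)\in\Ucal_A : \Xi(\hb,\tb)>u_n(\hb)\} \to 1 - e^{-M_A}$, and letting $A\to\infty$ delivers $1 - e^{-e^{-\tau}} = \alpha$ by \eqref{tau}.

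The main obstacle I anticipate is verifying the conditions \eqref{chan_nonconstant1}--\eqref{chan_nonconstant2} of \lemref{chan_nonconstant} on the cross-scale unions $K_\wb \cup K_{\wb'}$ when $\kb \ne \kb'$: the two sub-blocks live at different geometric scales, so the Lipschitz constants for $u_n(\underline h\gb)$ and the domination required for dominated convergence must be controlled uniformly across pairs. However, because $A$ is held fixed during the inner limit $n\to\infty$, the ratio of scales $e^{|k_j - k_j'|}$ is bounded by $e^A$, so the bounds from \lemref{ua_lipschitz} and \lemref{u_rate} used inside the proof of \lemref{nonconstant_scan} carry over to the union with at most a constant inflation, and the cross-scale calculation reduces to bookkeeping patterned on that proof.
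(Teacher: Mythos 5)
Your proposal is correct and follows essentially the same route as the paper: the same tiling of $\Ucal_A$ by the blocks $K_{(\underline h e^{\kb},\tb)}$, the same first-moment computation via \lemref{nonconstant_scan} giving $M_A = e^{-\tau}(1-1/(A+1))^d$, and the same Chen--Stein argument with a bounded blanket/neighborhood, $A_1$ controlled by \lemref{nonconstant_bound} and $A_2$ by applying the nonconstant boundary-crossing asymptotics to $K_\wb \cup K_{\wb'}$. The only cosmetic difference is that the paper handles the boundary blocks by sandwiching $\Ucal_A$ between unions over $\underline\Ical$ and $\overline\Ical$, where you simply note the boundary contribution is $o(1)$.
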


\begin{proof}
Resume the notation and definitions of \lemref{nonconstant_bound}.
We partition the space $\Wcal$ into blocks in the scale and location parameters.
Define 
\[
\overline \Ical = \Big\{ (\hb,\tb) \in \Wcal : \exists \kb \in [A]^d, \hb = \underline h e^\kb, \tb \in \Tcal(\hb) \Big\}
\]
and $\underline \Ical = \overline \Ical \cap \Ucal_A$
whereby $\cup_{\wb \in \underline \Ical} K_\wb \subseteq \Ucal_A \subseteq \cup_{\wb \in \overline \Ical} K_\wb$.
Recall that for $\kb \in [A]^d$ and $\hb = \underline h e^{\kb}$,
\[
\PP \big( E_{(\hb,\tb)}\big) = \PP \big( E_{(\hb,\hb)}\big), \quad \forall \tb \in \Tcal(\hb)
\]
by translation invariance.
By \lemref{nonconstant_scan},
\[
\PP \big( E_{(\hb,\hb)}\big) \sim |\Tcal(\hb)|^{-1} e^{-\tau} \prod_j \big[k_j^{-1} - (1+ k_j)^{-1} \big].
\]
We partition the set $\Ucal_A$ into the blocks $\{ K_\wb : \wb \in \overline \Ical \}$ and then use the Chen-Stein Poisson approximation to derive $\PP(\cup_{\wb \in \overline \Ical} E_\wb)$.
We have
\[\begin{split}
M := \sum_{\wb \in \overline \Ical} \PP ( E_\wb) 
&= \sum_{\kb \in [A]^d} \ \sum_{\tb \in \Tcal(\underline h e^\kb)} \PP \big( E_{(\underline h e^\kb,\tb)}\big) = \sum_{\kb \in [A]^d} \ |\Tcal(\underline h e^\kb)| \PP \big( E_{(\underline h e^\kb,\underline h e^\kb)}\big) \\
&\rightarrow e^{-\tau} \sum_{\kb \in [A]^d} \prod_j \big[ k_j^{-1} - (1 + k_j)^{-1} \big].
\end{split}\] 
We then have that
\[
\sum_{\kb \in [A]^d} \prod_j \big[ k_j^{-1} - (1 + k_j)^{-1} \big] = \Big( \tsum_{k = 1}^A [k^{-1} - (1 + k)^{-1}] \Big)^d = \Big( 1 - 1/(1 + A) \Big)^d.
\]
Thus, we obtain that 
\[\lim_{A \rightarrow \infty} \lim_{n \rightarrow \infty} M \rightarrow e^{-\tau}.\]

Two events, $E_{(\hb,\tb)}, E_{(\gb,\sbb)}$, are independent if $|t_j - s_j| > 2 (h_j \vee g_j)$, for some $j \in [d]$.
Consider then the `blanket' sets 
\[B_{(\hb,\tb)} = \Big\{ (\gb,\sbb) \in \overline \Ical \setminus \{(\hb, \tb)\} : \forall j \in [d], |t_j - s_j| \le 2 (h_j \vee g_j) \Big\}.\]  
We have
\[
|B_{(\hb,\tb)}| \le \sum_{\kb \in [A]^d} \Big|\Big\{ \sbb \in \Tcal(\underline h e^\kb) : \exists j \in [d], |t_j -  s_j| \le 2 \underline h e^A  \Big\}\Big| \le 4^d \sum_{\kb \in [A]^d} \left\lceil e^{\sum_{j=1}^d (A - k_j)} \right\rceil.
\]
which is a constant depending only on $d$ and $A$.  Thus,
\[
A_1 := \sum_{\wb \in \overline \Ical} \sum_{\wb' \in B_\wb} \PP (E_\wb) \PP(E_{\wb'}) \le \Big( \max_{\wb \in \overline \Ical} |B_\wb| \PP(E_\wb) \Big) \sum_{\wb \in \overline \Ical} \PP (E_\wb) = o\Big( \sum_{\wb \in \overline \Ical} \PP (E_\wb) \Big),
\]
since $\max_{\wb \in \overline \Ical} \PP(E_\wb) = o(1)$ by \lemref{nonconstant_bound}.
Take $\wb \in \overline \Ical$ and $\wb' = (\hb',\tb') \in B_{\wb}$. We have
\[
\PP ( E_{\wb} \cap E_{\wb'} ) = \PP (E_\wb) + \PP(E_{\wb'}) - \PP (E_\wb \cup E_{\wb'}). 
\]
By same exact arguments underlying the proof of \lemref{nonconstant_scan},
\[
\PP (E_\wb \cup E_{\wb'}) = \PP \Big\{ \exists (\hb_0,\tb_0) \in K_\wb \cup K_{\wb'} : \Xi(\hb_0,\tb_0) > u(\hb_0) \Big\} \sim \PP (E_{\wb}) + \PP (E_{\wb'}).
\]
We can also see from \lemref{nonconstant_bound} that, uniformly over $\wb' \in B_{\wb}$,
\[
\PP (E_{\wb'}) = O( \PP (E_{\wb}) ).
\]
Again by translation invariance and the fact that both $|[A]^d|$ and $|B_\wb|$ are bounded in $n$,
\begin{eqnarray*}
A_2:&= &\sum_{\kb \in [A]^d} \sum_{\tb \in \Tcal(\underline h e^\kb)} \sum_{\wb' \in B_{(\underline h e^\kb, \tb)}} \PP ( E_{(\underline h e^\kb,\tb)} \cap E_{\wb'} ) \\
&=& \sum_{\kb \in [A]^d} |\Tcal(\underline h e^\kb)|  \sum_{\wb' \in B_{(\underline h e^\kb, \underline h e^\kb)}} o\big[\PP ( E_{(\underline h e^\kb, \underline h e^\kb)}) + \PP (E_{\wb'})\big] \\
&=& o(M) = o(1).
\end{eqnarray*}
This shows that the events $E_\wb$ over $\overline \Ical$ have finite-range dependence.
Hence, by \citep[Th 1]{arratia1989two} we have that
\[
\Big| \PP \left( \cap_{\wb \in \overline \Ical} E_{\wb}^\comp \right) - e^{-M} \Big| \le A_1 + A_2 = o(1).
\]

This also holds with $\underline \Ical$ in place of $\overline \Ical$, and with $\lim_{n \rightarrow \infty}M$ unaffected.  So the proof is complete.
\end{proof}

\begin{lemma}
\label{lem:up_scale_adascan}
With $\Ucal_A$ defined in \lemref{low_scale_adascan}, we also have
\[
\lim_{A \rightarrow \infty} \lim_{n \rightarrow \infty} \PP \big\{ \exists (\hb,\tb) \in \Wcal \setminus \Ucal_A : \Xi(\hb, \tb) > u(\hb) \big\} = 0.
\]
\end{lemma}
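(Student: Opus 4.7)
The plan is to union-bound the event over the dyadic-scale blocks used in the proof of \lemref{low_scale_adascan}, restricted to scales with $\max_j k_j > A$, and show the resulting sum vanishes as $A \to \infty$ via a tail bound on $\sum_k k^{-2}$. Set $K_n = \lceil \log(\overline h/\underline h) \rceil$. The blocks $K_{(\underline h e^\kb, \tb)}$ with $\kb \in [K_n]^d$ and $\tb \in \Tcal(\underline h e^\kb)$ cover all of $\Wcal$, and those intersecting $\Wcal \setminus \Ucal_A$ are precisely the ones with $\max_j k_j > A$, by the definition of $\Ucal_A$ and a direct translation of the covering argument in \lemref{low_scale_adascan}.

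A union bound combined with \lemref{nonconstant_bound} then gives
\[
\PP\big\{\exists (\hb, \tb) \in \Wcal \setminus \Ucal_A : \Xi(\hb, \tb) > u(\hb)\big\}
\le \sum_{\kb \in [K_n]^d \, :\, \max_j k_j > A} |\Tcal(\underline h e^\kb)| \cdot C \prod_{j=1}^d \frac{\underline h e^{k_j}}{n k_j^2}.
\]
Since $\overline h \le n/e$, we have $n/(\underline h e^{k_j}) \ge 1$ for $\kb \in [K_n]^d$, so $|\Tcal(\underline h e^\kb)| \le 2^d \prod_j n/(\underline h e^{k_j})$. Cancelling factors and extending the sum to $\ZZ_+^d$ reduces the bound to
\[
C' \sum_{\kb \in \ZZ_+^d \, :\, \max_j k_j > A} \prod_{j=1}^d k_j^{-2}
\le C' \, d \Big( \tsum_{k \ge 1} k^{-2}\Big)^{d-1} \tsum_{k > A} k^{-2},
\]
where the last step is a union bound over which coordinate $j$ exceeds $A$.

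The right-hand side is independent of $n$ and tends to $0$ as $A \to \infty$, which yields the claim after taking $\limsup_n$ and then $\lim_A$. The argument is essentially mechanical given \lemref{nonconstant_bound}; the main points to verify are that the constant $C$ in \lemref{nonconstant_bound} is genuinely uniform in $\kb$ over the range $[K_n]^d$ (this is in the statement of that lemma) and that the block decomposition indexed by $\kb \in [K_n]^d$ really does cover $\Wcal \setminus \Ucal_A$, which is a minor rewriting of the dyadic covering from \lemref{low_scale_adascan}. No substantial obstacle arises.
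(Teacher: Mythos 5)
Your proof is correct and follows essentially the same route as the paper: the same block decomposition into the events $E_{(\underline h e^{\kb},\tb)}$ with $\kb \notin [A]^d$, a union bound over blocks, and the uniform-in-$(n,\kb,\tb)$ bound of \lemref{nonconstant_bound}. The only difference is cosmetic --- the paper additionally invokes \lemref{nonconstant_scan} and dominated convergence to identify the limit as $e^{-\tau}\big(1-(1-1/(1+A))^d\big)$, whereas you bound the sum directly by the tail of $\sum_k k^{-2}$, which suffices since only convergence to zero as $A \to \infty$ is needed.
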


\begin{proof}
We keep the same notation as in the previous proof.
Define the event
\[
\Ecal_A = \Big\{ \exists (\hb, \tb) \in \Wcal \backslash \Ucal_A : \Xi(\hb, \tb) > u(\hb) \Big\}.
\]
Note that $\Ecal_A$ depends on $n$ via $u(\hb)$ in \eqref{u_a}.
By the union bound,
\beq \label{pnk}
\PP (\Ecal_A) \le \sum_{\kb \in [\log n]^d \backslash[A]^d} p_{n,\kb}, \quad \text{where } p_{n,\kb} := \sum_{\tb \in \Tcal(\underline h e^\kb)} \PP ( E_{(\underline h e^\kb,\tb)}) = \big(\tprod_j \lceil n / \underline h e^{k_j} \rceil\big) \PP ( E_{(\underline h e^\kb, \underline h e^\kb)}),
\eeq
by translation invariance. 
By \lemref{nonconstant_scan},
\begin{eqnarray*}
\sum_{\kb \in \ZZ_+^d \backslash[A]^d} \lim_{n \rightarrow \infty} p_{n,\kb} &=& e^{-\tau} \sum_{\kb \in \ZZ_+^d \backslash[A]^d} \prod_{j=1}^d \Big[ k_j^{-1} - (1 + k_j)^{-1} \Big]\\
&=& e^{-\tau} \Big( \sum_{\kb \in \ZZ_+^d} \tprod_j \big[ k_j^{-1} - (1 + k_j)^{-1} \big] - \sum_{\kb \in [A]^d} \tprod_j \big[ k_j^{-1} - (1 + k_j)^{-1} \big] \Big)\\
&=& e^{-\tau} \Big(1  - \big(1 - 1/(1+A)\big)^d \Big).
\end{eqnarray*}
By \lemref{nonconstant_bound}, 
\[
p_{n,\kb} \le \Big(\prod_j \lceil n/h_j\rceil \Big) C \prod_j \frac{\underline h e^{k_j}}{n k_j^2} \le C_1 \prod_j k_j^{-2}
\]
for $C_1 > 0$ not dependent on $n$ or $\kb$.
Hence, $D_\kb := C_1 \prod_j k_j^{-2}$ is a dominating sequence that is independent of $n$ and summable over $\ZZ_+^d$, and satisfies $p_{n,\kb} \le D_\kb$. 
Thus, we can apply dominated convergence and conclude that
\[
\lim_{n \rightarrow \infty} \PP(\Ecal_{n,A}) \le \lim_{n \rightarrow \infty} \sum_{\kb \in \ZZ_+^d \backslash[A]^d} p_{n,\kb} = \sum_{\kb \in \ZZ_+^d \backslash[A]^d} \lim_{n \rightarrow \infty}  p_{n,\kb} =  e^{-\tau} \Big(1  - \big(1 - 1/(1+A)\big)^d \Big).
\]
Since the RHS tends to zero as $A \to \infty$, the proof is complete.
\end{proof}

\subsubsection{Proof of Theorem \ref{thm:adaptive_type1}}
By \lemref{low_scale_adascan} and \lemref{up_scale_adascan}, 
\begin{eqnarray}
\lim_{n \rightarrow \infty} \PP \Big\{ \exists (\hb,\tb) \in \Wcal : \Xi(\hb, \tb) > u(\hb) \Big\} 
&=& \lim_{A \rightarrow \infty} \Big[ \lim_{n \rightarrow \infty} \big( \PP \Big\{ \exists (\hb,\tb) \in \Ucal_A : \Xi(\hb,\tb) > u(\hb) \Big\} \notag \\
&&\quad +\ \lim_{n \rightarrow \infty} \PP \Big\{ \exists (\hb,\tb) \in \Wcal \backslash \Ucal_A : \Xi(\hb,\tb) > u(\hb) \Big\} \Big] \notag \\
&=& \alpha. \label{W_alpha}
\end{eqnarray}
Hence, the random variable $\tilde \tau$ defined in \eqref{eq:tilde_tau} satisfies
\[
\lim_{n \rightarrow \infty} \PP \{ \tilde \tau > \tau \} = \lim_{n \rightarrow \infty} \PP \Big\{ \exists (\hb,\tb) \in \Wcal : \Xi(\hb, \tb) > u(\hb) \Big\} = \alpha.
\]
Now, we may apply \lemref{approx_tau} with \lemref{Z_epsnet} to obtain that the statistic $\hat \tau$ defined in \eqref{eq:hat_tau} satisfies $|\tilde \tau - \hat \tau| = o_\PP(1)$ and, therefore,
\[
\lim_{n \rightarrow \infty} \PP \Big\{ \exists (\hb,\tb) \in \Wcal \cap \ZZ^{2d} : \xi[R(\hb, \tb)] > u(\hb) \Big\} = \lim_{n \rightarrow \infty} \PP \{ \hat \tau > \tau \} = \alpha.
\]

\subsubsection{Proof of Theorem \ref{thm:adaptive_type2}}
We resume the notation introduced in Sections~\ref{sec:proof_oracle_type2} and~\ref{sec:proof_multiscale_type2}.  The arguments here are very similar, so that we will omit some details.
We focus on the case in which $\mu - v(\hb^\star) \rightarrow c$.
By \lemref{xi_bound}, Part 2, and the fact that $\min_{\hb} v(\hb) \to \infty$,
\[
\PP\big\{\exists (\hb,\tb) \in \Ucal : \Xi(\hb,\tb) > u(\hb)\big\} \le \PP\big\{\exists (\hb,\tb) \in \Ucal : \Xi(\hb,\tb) > v(\hb) - o(1) \big\} = o_\PP(1).
\]
Thus, combining this with \eqref{W_alpha}, we have
\[
\begin{aligned}
\PP\big\{\exists (\hb,\tb) \in \Wcal \backslash \Ucal :  \Xi(\hb,\tb) > u(\hb)\big\} \rightarrow \alpha.
\end{aligned}
\]
Hence,
\[
\begin{aligned}
\PP \big\{\exists (\hb,\tb) \in \Wcal :  \Upsilon(\hb,\tb) > u(\hb) \big\} 
&\ge \PP \big\{\exists (\hb,\tb) \in \Wcal \backslash \Ucal :\Upsilon(\hb,\tb) > u(\hb) \big\} \\
&\quad + \PP \big\{ \Upsilon(\wb^\star) > u(\hb^\star)\big\} \PP \big\{ \exists (\hb,\tb) \in \Wcal \backslash \Ucal : \Upsilon(\hb,\tb) \le u(\hb) \big\}\\
&\rightarrow \alpha + \bar \Phi(c) (1 - \alpha). 
\end{aligned}
\]
By \lemref{ua_lipschitz}, there is some $L > 0$ such that for all $\wb = (\hb,\tb) \in \Ucal_\eta$, $u(\hb^\star) - u(\hb) \le L \delta(\wb^\star,\wb)$ for $\delta(\wb^\star,\wb) \le \epsilon_0$.
Select $\eta\rightarrow 0$ such that $\mu \eta \rightarrow \infty$.
For $\wb = (\hb,\tb) \in \Ucal_\eta$,
\[
\begin{aligned}
( \Upsilon(\wb) - u(\hb) ) - ( \Upsilon(\wb^\star) - u(\hb^\star) ) = [\Xi(\wb) - \Xi(\wb^\star)] + [m(\wb) - m(\wb^\star)] + [u(\hb^\star) - u(\hb)] \\
\le |\Xi(\wb) - \Xi(\wb^\star)| + L \delta(\wb,\wb^\star).
\end{aligned}
\]
By \lemref{xi_bound}, Part 2, 
\[
\sup_{\wb \in \Ucal_\eta} |\Xi(\wb) - \Xi(\wb^\star)| = O_\PP(1).
\]
By this, the fact that if $\eta \rightarrow 0$ then $\sup_{\wb \in \Ucal_\eta}\delta(\wb,\wb^\star) \rightarrow 0$, and that $m(\wb^\star) \ge m(\wb)$,
\[
\sup_{\wb \in \Ucal_\eta} [\Upsilon(\wb) - u(\hb)] - [\Upsilon(\wb^\star) - u(\hb^\star)] = O_\PP(1).
\]
Hence,
\[
\PP \big\{\exists (\hb,\tb) \in \Ucal_\eta : \Upsilon(\hb,\tb) > u(\hb) \big\} \rightarrow \bar \Phi(c).
\]
Again by \lemref{xi_bound}, Part 2,
\[
\sup_{\wb \in \Ucal \backslash \Ucal_\eta} \Upsilon(\wb) \le \mu (1 - \eta) + O_\PP(1).
\]
Thus, 
\[
\mu - \sup_{\wb \in \Ucal \backslash \Ucal_\eta} \Upsilon(\wb) \rightarrow \infty.
\]
Hence,
\[
\PP \big\{\exists (\hb,\tb) \in \Ucal \setminus \Ucal_\eta : \Upsilon(\hb,\tb) > u(\hb) \big\} \rightarrow 0.
\]
The probability of exceedance can be bounded by
\[
\begin{aligned}
&\PP \big\{\exists (\hb,\tb) \in \Wcal : \Upsilon(\hb,\tb) > u(\hb) \big\} \\
&\quad \le \PP \big\{ \exists (\hb,\tb) \in \Wcal \setminus \Ucal : \Upsilon(\hb,\tb) > u(\hb) \big\} \\
&\quad\quad + \PP \big\{\exists (\hb,\tb) \in \Ucal \setminus \Ucal_\eta : \Upsilon(\hb,\tb) > u(\hb) \big\} \\
&\quad\quad\quad + \PP \big\{ \exists (\hb,\tb) \in \Ucal_\eta : \Upsilon(\hb,\tb) > u(\hb)\big\} \PP \big\{ \nexists (\hb,\tb) \in \Wcal \backslash \Ucal : \Upsilon(\hb,\tb) > u(\hb)\big\} \\
&\quad \rightarrow \alpha + \bar \Phi(c) (1 - \alpha), 
\end{aligned}
\]
by independence of $\{\Upsilon(\wb) : \wb \in \Ucal_\eta\}$ and $\{\Upsilon(\wb) : \wb \in \Wcal \setminus \Ucal\}$.

We conclude that
\[
\PP \big\{\exists (\hb,\tb) \in \Wcal :  \Upsilon(\hb,\tb) > u(\hb) \big\} 
\rightarrow \alpha + \bar \Phi(c) (1 - \alpha). 
\]
By \lemref{approx_tau} and \lemref{Z_epsnet}, we find that this also holds when $\Wcal$ is replaced by $\Wcal'$, as long as $\underline h = \omega(\log n)$.  And from this we conclude as in \secref{proof_oracle_type2}.

\subsubsection{Proof of \thmref{epsilon}}
For adaptive multiscale scan, \lemref{ada_lipschitz} allows us to apply the conclusion of \lemref{approx_tau} to the critical value \eqref{u_a}. For the multiscale scan statistic, \lemref{approx_tau} applies to the constant critical value \eqref{u_m}.
Let $\hat \tau$ be the result of the scan over the discrete set, $\Wcal \cap \ZZ^{2d}$, for either the (resp.~adaptive) multiscale scan, and let $\hat \tau_\epsilon$ be the scan over the $\epsilon$-covering.
Then by \lemref{Z_epsnet}, $\Wcal \cap \ZZ^{2d}$ is an $\epsilon'$-covering of $\Wcal$ for $\epsilon' = \sqrt{4d / \underline h} = o((\log n)^{-1/2})$.
Thus, we may apply \lemref{approx_tau}, unless $\mu = \omega(\sqrt{\log (n / \underline h)})$ under $H_1$, to show that $|\tilde \tau - \hat \tau| = o_\PP(1)$.
Likewise, when $\epsilon = o((\log n)^{-1/2})$ then $\hat \tau_\epsilon$ fulfills the conditions of \lemref{approx_tau} unless $\mu = \omega(\sqrt{\log (n / \underline h)})$ under $H_1$.
But $\mu = \omega(\sqrt{\log (n / \underline h)})$ implies that $\hat \tau, \hat \tau_\epsilon, \tilde \tau \rightarrow \infty$ because then $y[R^\star] = \omega_\PP(\sqrt{\log (n / \underline h)})$.
In this case, $\hat \alpha, \hat \alpha_\epsilon \rightarrow 0$.
When this is not the case then $|\hat \tau - \tilde \tau| = o_\PP(1)$ and $|\hat \tau_\epsilon - \tilde \tau| = o_\PP(1)$ by \lemref{approx_tau}, and so $|\hat \tau - \hat \tau_\epsilon| = o_\PP(1)$.
Because $\hat \alpha = 1 - \exp(-\exp(-\hat \tau))$ and $\hat \alpha_\epsilon = 1 - \exp(-\exp(-\hat \tau_\epsilon))$, the result follows by the continuous mapping theorem.

\subsubsection{Proof of Proposition \ref{prop:eps-cover}}
We now show that $\Rcal_\epsilon$ is an $\epsilon$-covering of $\Rcal$.
Specifically, for each $(\hb,\tb) \in \Wcal$, we construct $(\gb,\sbb)$ such that $R(\gb,\sbb) \in \Rcal_\epsilon$ and $\delta(R(\hb,\tb), R(\gb,\sbb)) \le \eps$.
Take $a_j = \lfloor \log_2 \frac{h_j \epsilon^2}{4d} \rfloor \ge \underline a$, for each $j \in [d]$.
Define
\[
g_j = \argmin\{ |h - h_j| : h \in 2^{a_j} \ZZ_+\}, \quad
s_j = \argmin\{ |s - t_j| : s \in 2^{a_j} \ZZ_+\}.
\]
We know that 
\[
\frac{4d}{h_j \epsilon^2} = 2^{- \log_2 \frac{h_j \epsilon^2}{4d}} \le 2^{-a_j} \le 2^{- \log_2 \frac{h_j \epsilon}{4d} + 1} = \frac{8d}{h_j \epsilon^2}.
\]
By the construction,
\[
2^{-a_j} |g_j - h_j| \le \frac 12.
\]
Hence, we have that 
\beq \label{g_int}
2^{-a_j} g_j \in \left[ 2^{-a_j} h_j - \frac 12 , 2^{-a_j} h_j + \frac 12 \right] \subseteq \left[ \frac{4d}{\epsilon^2} - \frac 12, \frac{8d}{ \epsilon^2} + \frac 12 \right].
\eeq
But because $2^{-a_j} g_j$ is constructed to be in $\ZZ_+$ then we know that it lies within $[\lceil 8 d / \epsilon^2 \rceil]$.
Therefore, $R(\gb,\sbb) \in \Rcal_\epsilon$.
Let $\epsilon^2 < 4d$.
It remains to show that $\theta((h_j,t_j),(g_j,s_j)) \le \epsilon^2 / 2d$ for all $j$, so that $\delta (\hb,\tb),(\gb,\sbb)) \le \epsilon$ by \eqref{eq:delta_bound}.  
We can see that
\[
|g_j - h_j| , |s_j - t_j| \le 2^{a_j - 1}, \textrm{ and } h_j \in 2^{a_j} \left[ \frac{4d}{\epsilon^2}, \frac{8d}{\epsilon^2} \right].
\]
Because $4d / \epsilon^2 \ge 1$ then $h_j \ge 2^{a_j}$.
Furthermore, $h_j^2 - 2^{a_j - 1} h_j$ is an increasing function for $h_j \ge 2^{a_j}$.
Hence,
\[
g_j h_j = h_j^2 - (h_j - g_j) h_j \ge h_j^2 - 2^{a_j - 1} h_j \ge 2^{2a_j} \big( \tfrac{16d^2}{\epsilon^4} - \tfrac{2d}{\epsilon^2} \big).
\]
We then have 
\[
\theta((h_j,t_j),(g_j,s_j)) \le \frac{|g_j - h_j| + |s_j - t_j|}{\sqrt{g_j h_j}} \le \left(\frac{16d^2}{\epsilon^4} - \frac{2d}{\epsilon^2} \right)^{-1/2} \le \frac{\epsilon^2}{2d} \big( 4 -  \tfrac{\epsilon^2}{2d}\big)^{-\frac 12} < \frac{\epsilon^2}{2d} ,
\]
since $\epsilon^2 < 4d$.

\subsubsection{Proof of Proposition \ref{prop:alg_eps_net}}
First, we establish that, for $\ab \in \{ \underline a, \ldots, \overline a \}^d$,
\begin{equation}
\label{eq:dyad_decomp}
(\textrm{dyad}_\ab \ast b_{\fb})(\tb) = (y \ast b_{2^\ab \circ \fb})(2^\ab \circ \tb), \quad \tb \in [n / 2^{\ab}].
\end{equation}
An induction on $\|\ab\|_1$, based on the recursion in Line~\ref{line:dyad}, 
gives
\[
\textrm{dyad}_\ab(\tb) = \sum_{\ib \in [2^{\ab}]} y(2^\ab \circ \tb + \ib), \quad \forall \tb \in \times_j [n / 2^{a_j}].
\]
Based on this, we have
\[\begin{split}
(\textrm{dyad}_\ab \ast b_\fb)(\tb) 
&= \sum_{\ib \in [\fb]} \textrm{dyad}_\ab (\ib + \tb) = \sum_{\ib \in [\fb]} \sum_{\kb \in [2^\ab]} y(2^\ab \circ (\ib + \tb) + \kb) \\
&= \sum_{\ib \in [2^{\ab} \fb]} y(\ib + 2^\ab \circ \tb) = (y \ast b_{2^\ab \circ \fb}) (2^\ab \circ \tb).
\end{split}\]
With \eqref{eq:dyad_decomp}, we can see that the statistic $\hat s$ in \algref{eps_rect} is equivalently expressed as
\[
\max_{\tb \in [n / 2^{\ab}]} y\big[ [2^{\ab}, 2^{\ab} (\tb + \fb) ] \big],
\]
confirming that \algref{eps_rect} does scan over $\Rcal_\epsilon$.

\subsubsection{Proof of Proposition \ref{prop:eps_time}}
First, the construction of $\textrm{dyad}$ takes $O(n^d)$ operations.
Indeed, the computation of $\textrm{dyad}_\ab(\tb)$ over $\ab \in [\log_2 n]^d \backslash \{1\}^d$ and $\tb \in [n/2^\ab]$ is done from Line~\ref{line:start_dyad_loop} to Line~\ref{line:end_dyad_loop} in \algref{eps_rect}, and is easily seen to require on the order of
\[
\sum_{\ab \in [\log_2 n]^d} \prod_j (n/2^{a_j}) \le n^d \Big(\sum_{a \ge 1} 2^{-a}\Big)^d = n^d
\] 
basic operations. 

Second, defining $a_+ = \sum_{j=1}^d a_j$, the convolution $\textrm{dyad}_\ab \ast b_{\fb}$ takes $O(n^d 2^{-a_+} \log n)$ operations with the FFT, since the convolution happens on a grid of size $\prod_j (n/2^{a_j}) = n^d 2^{-a_+}$.
Therefore, the computation on Line~\ref{line:shat} requires $O(n^d 2^{-a_+} \log n)$ basic operations.
Hence, once ${\rm dyad}$ is computed, computing $\hat \alpha$ requires on the order of 
\[
\sum_{\ab \in [\underline a, \overline a]^d} d \Big(\tprod_j |\Fcal_j|\Big) \Big(\frac{n^d}{2^{a_+}} \log n\Big) = O(\eps^{-2d} n^d \log n) \Big(\tsum_{a \ge \underline a} 2^{-a}\Big)^d = O(\eps^{-2d} n^d 2^{-d \underline a} \log n ),
\]
with $2^{-\underline a} = O(1/\epsilon^2 \underline h)$ since $\epsilon \underline h \ge 1$.
From this, we conclude.

\subsection*{Acknowledgments}

This work was partially supported by a grant from the US National Science Foundation (DMS 1223137).  
The authors would like to thank Zakhar Kabluchko for clarifying some technical points appearing in his work.

\bibliographystyle{chicago}
\bibliography{biblio}

\end{document}